\documentclass[11pt,reqno]{amsart}

\usepackage{enumerate}   

\usepackage{amssymb}
\usepackage{amsmath}
\usepackage{amsthm}
\usepackage{latexsym}
\usepackage{amsfonts}
\usepackage{mathrsfs}
\usepackage{bbm}
\usepackage{color}
\usepackage{hyperref}
\usepackage[normalem]{ulem}

%

\newtheorem{thm}{Theorem}[section]
\newtheorem{cor}[thm]{Corollary}
\newtheorem{lem}[thm]{Lemma}
\newtheorem{prop}[thm]{Proposition}

\theoremstyle{definition}
\newtheorem{defn}[thm]{Definition}
\newtheorem{rem}[thm]{Remark}
\newtheorem{ex}[thm]{Example}

\numberwithin{equation}{section}
\setlength{\oddsidemargin}{1cm}
\setlength{\evensidemargin}{1cm}
\setlength{\textwidth}{148mm}
\allowdisplaybreaks

%
%
\newcommand{\ol}{\overline}
\newcommand{\ds}{\dotplus}

\newcommand{\rmref}[1]{{\rm\ref{#1}}}

\newcommand{\one}{\mathbbm{1}}
\newcommand{\braces}[1]{{\rm (}#1{\rm )}}

\newcommand{\C}{\ensuremath{\mathbb C}}    
\newcommand{\N}{\ensuremath{\mathbb N}}    
\newcommand{\D}{\ensuremath{\mathbb D}}    
\newcommand{\T}{\ensuremath{\mathbb T}}    


\newcommand{\hproduct}{(\cdot\,,\cdot)}
\newcommand{\<}{\langle}
\renewcommand{\>}{\rangle}


\newcommand{\calA}{\mathcal A}

\newcommand{\calF}{\mathcal F}         
         
\newcommand{\calH}{\mathcal H}

\newcommand{\calK}{\mathcal K}         
\newcommand{\calL}{\mathcal L}


\newcommand{\la}{\lambda}
\newcommand{\veps}{\varepsilon}
\newcommand{\vphi}{\varphi}

\newcommand{\mat}[4]
{
   \begin{pmatrix}
      #1 & #2\\
      #3 & #4
   \end{pmatrix}
}


\renewcommand{\Re}{\operatorname{Re}}
\newcommand{\linspan}{\operatorname{span}}
\renewcommand{\ker}{\operatorname{ker}}
\newcommand{\ran}{\operatorname{ran}}

\newcommand{\codim}{\operatorname{codim}}


\newcommand{\sess}{\sigma_{\rm ess}}


\newcommand{\Sra}{\Rightarrow}

\definecolor{darkgreen}{rgb}{0,0.6,0.1}



\newcommand{\Id}{\operatorname{Id}}
\newcommand{\id}{\operatorname{id}}
\newcommand{\mult}{\operatorname{mult}}
\newcommand{\ind}{\operatorname{ind}}

\newcommand{\BB}{\mathbb B}

\newcommand{\nul}{\operatorname{nul}}
\newcommand{\defi}{\operatorname{def}}

\begin{document}
\title[]{Dynamical Sampling on Finite Index Sets}

\author[C. Cabrelli]{Carlos Cabrelli}
\author[U. Molter]{Ursula Molter}
\author[V. Paternostro]{Victoria Paternostro}

\address[C. Cabrelli, U. Molter and V. Paternostro]{Universidad de Buenos Aires, Facultad de Ciencias Exactas y Naturales, Departamento de Matem\'atica, Buenos Aires, Argentina, and CONICET-Universidad de Buenos Aires, Instituto de Investigaciones Matem\'aticas Luis A. Santalo (IMAS), Buenos Aires, Argentina}

\email[C. Cabrelli]{cabrelli@dm.uba.ar}
\urladdr[C. Cabrelli]{http://mate.dm.uba.ar/~cabrelli/}
\email[U. Molter]{umolter@dm.uba.ar}
\urladdr[U. Molter]{http://mate.dm.uba.ar/~umolter/}
\email[V. Paternostro]{vpater@dm.uba.ar}

\author[F. Philipp]{Friedrich Philipp}
\address[F. Philipp]{KU Eichst\"att-Ingolstadt, Mathematisch-Geographische Fakult\"at, Ostenstra\ss e 26, Kollegiengeb\"aude I Bau B, 85072 Eichst\"att, Germany}
\email{fmphilipp@gmail.com}
\urladdr{http://www.ku.de/?fmphilipp}

\thanks{The research of the authors was partially supported by UBACyT under grants 20020130100403BA, 20020130100422BA, and 20020150200110BA, by CONICET (PIP 11220110101018), and MinCyT Argentina under grant PICT-2014-1480.}

\begin{abstract}
We consider bounded operators $A$ acting iteratively on a finite set of vectors $\{f_i:i\in I\}$ in a Hilbert space $\calH$ and address the problem of providing necessary and sufficient conditions for the collection of iterates $\{A^nf_i: i\in I, n=0,1,2,....\}$ to form a frame for the space $\calH$. For normal operators $A$ we completely solve the problem by proving a characterization theorem. Our proof incorporates techniques from different areas of mathematics, such as operator theory, spectral theory, harmonic analysis, and complex analysis in the unit disk. In the second part of the paper we drop the strong condition on $A$ to be normal. Despite this quite general setting, we are able to prove a characterization which allows to infer many strong necessary conditions on the operator $A$. For example, $A$ needs to be similar to a contraction of a very special kind. We also prove a characterization theorem for the finite-dimensional case. --- These results provide a theoretical solution to the so-called Dynamical Sampling problem where a signal $f$ that is evolving in time through iterates of an operator $A$ is spatially sub-sampled at various times and one seeks to reconstruct the signal $f$ from these spatial-temporal samples.
\end{abstract}

\subjclass[2010]{94A20, 42C15, 30J99, 47A53}
\keywords{Sampling Theory, Dynamical Sampling, Frame, Normal Operator, Semi-Fredholm, Strongly stable, Contraction}

\maketitle
\thispagestyle{empty}

\section{Introduction}
Given a system of vectors $\{f_i\}_{i\in I}$ from some Hilbert space $\calH$ and a normal operator $A$, we consider  the collection of iterates $\calA=\{A^nf_i: i\in I, n = 1, \dots, l_i\}.$  
We are interested in the special structure of this set. The relevant questions are when the set $\mathcal{A}$ is complete in $\calH$, when it is a basis, when it is a Bessel sequence, or when it forms a frame for $\calH$. In particular, one  seeks conditions on the operator $A$, the vectors $\{f_i\}$ and the number of iterations $l_i$ in order to guarantee the desired properties of the system $\calA$.

These questions are in general of a very difficult nature. Their answers require the use of notions and techniques of different areas of mathematics  such as operator theory, spectral theory, harmonic analysis, and complex analysis in the unit disk. The results are most of the time unexpected. Just to mention some examples, it was proved in \cite{acmt} that if $A$ is a diagonal operator in $\ell^2(\N)$, the collection $\calA$ can never be a basis of $\calH$. It was also shown in \cite{acmt} that for these kinds of operators the orbit $(A^nf)_{n\in\N}$ of one vector $f\in\ell^2(\N)$ is a frame for $\ell^2(\N)$ if and only if the sequence of eigenvalues of $A$ is a set of interpolation for the Hardy space $H^2(\D)$ of the unit disk $\D$ together with some boundedness condition on the vector $f$.
 
In signal processing this problem constitutes an instance of the so-called Dynamical Sampling problem. In Dynamical Sampling a signal $f$ that is evolving in time through an operator $A$ is spatially sub-sampled at multiple times and one seeks to reconstruct the signal $f$ from these spatial-temporal samples, thereby exploiting time evolution (see, e.g., \cite{aadp,accmp,acmt,adk,ap,lv,rclv}). Obviously, the task of reconstructing the signal is an inverse problem. In this paper we give necessary and sufficient conditions on its well-posedness.

In the following, we shall introduce the reader to the motivation, the ideas, and the details of Dynamical Sampling, describe the current state of research, and expose our contribution in this paper.

\vspace*{.2cm}
\subsection{Motivation and idea of Dynamical Sampling}
Let us assume that we are given the task of spatially sampling (i.e., evaluating) a signal $f$ from a function space $\calH$ in such a way that $f$ can later be recovered from these samples. The first idea is, of course, to sample the function $f$ at many convenient positions $x_i$ -- hoping that the knowledge on the properties of the functions in $\calH$ suffices to recover $f$ from the samples $f(x_i)$. However, in real-world scenarios there are typically many restrictions that one has to deal with. For example, the access to some of the required places $x_i$ might be prohibited. Another problem is that sensors are usually very expensive so that the installation of a great number of them in order to guarantee a high-accuracy recovery becomes a crucial financial problem.

However, in many situations the signal $f$ also varies in time and the evolution law is known. The idea of Dynamical Sampling is to avoid the above-mentioned obstacles by reducing the number of positions $x_i$ and to sample $f$ not only at one but at various times, thereby exploiting the knowledge of the evolution law. This idea was for the first time considered by Lu et al. (see \cite{lv,rclv}), where the authors investigated signals obeying the heat equation.

Therefore, in our model let us add a time entry to $f$ and assume that $f(t,\cdot)$ remains in $\calH$ for each $t\ge 0$ and that $f(t,x)$ is a solution to a dynamical system. In the simplest case, where this dynamical system is homogeneous and linear, the function $u(t) = f(t,\cdot)$, $t\ge 0$, maps $[0,\infty)$ to $\calH$ and satisfies $\dot u(t) = Bu(t)$, where $B$ is a generator of a semigroup $(T_t)_{t\ge 0}$ of operators. The solution of this Cauchy problem is then given by $u(t) = T_tu_0$, where $u_0 = u(0)$ is our original signal. If we sample uniformly in time and at fixed positions, the samples are of the following form:
$$
f(nt_0,x_i) = u(nt_0)(x_i) = [T_{t_0}^nu_0](x_i), \qquad n = 0,1,2,\ldots,n_i,\;i\in I.
$$
If $\calH$ is in fact a reproducing kernel Hilbert space (RKHS) with kernel $K$, we have
$$
f(nt_0,x_i) = \left\<T_{t_0}^nu_0,K_{x_i}\right\> = \left\<u_0,A^nK_{x_i}\right\>,
$$
where $A := T_{t_0}^*$. Since the original task was to recover $u_0$ from the retrieved information, the question now becomes: ``{\it Is $(A^nK_{x_i})_{n,i}$ complete in $\calH$?}''. If one requires the recovery to be a stable process, the question is ``{\it Is $(A^nK_{x_i})_{n,i}$ a frame for $\calH$?}''.

In the general Dynamical Sampling problem (see, e.g., \cite{acmt,accmp}), the $K_{x_i}$ in a RKHS are replaced by vectors $f_i$ from an arbitrary Hilbert space $\calH$. The question is now the following:

\begin{quote}
{\it For which operators $A$, which sets $I,N_i\subset\N$, and which vectors $f_i\in\calH$, $i\in I$, is the system $(A^nf_i)_{i\in I,\,n\in N_i}$ complete in $\calH$ or a frame for $\calH$?}
\end{quote}

\noindent In this paper, we focus on the case where the iteration sets $N_i$ do not depend on $i\in I$ and equal $N_i := N := \{0,1,\ldots,\dim\calH - 1\}$. In particular, $N = \N$ if $\calH$ is infinite-dimensional.

\vspace*{.2cm}
\subsection{Previous works on the topic and our contribution}
The history of Dynamical Sampling is fairly young. The papers \cite{lv,rclv} of Vetterli et al.\ can be seen as the first works on Dynamical Sampling. They consider the sampling of signals under diffusion evolution. The next series of papers, written by Aldroubi et al.\ (see, e.g., \cite{aadp,adk}), was dealing with the special type of convolution operators $A$. The first paper on the above-mentioned problem in its most general form was \cite{acmt}, in which the authors considered both the finite-dimensional and the infinite-dimensional case. They proved that if $A\in\C^{d\times d}$ is diagonalizable, then $(A^nf_i)_{i\in I,\,n\in N}$ is a frame for $\calH = \C^d$ if and only if for each eigenprojection $P$ of $A$ we have that $(Pf_i)_{i\in I}$ is complete in $P\calH$. If the operator $A$ is not diagonalizable, the above statement can be generalized, using the Jordan canonical form: If $A\in\C^{d\times d}$ and $\calF = \{f_i : i\in I\}$, then $(A^nf_i)_{i\in I,\,n\in N}$ is a frame for $\calH = \C^d$ if and only if for each eigenvalue $\la$ the projection $Q_\la$ of $\calF$ onto the cyclic Jordan vectors for the eigenvalue $\la$ along the image of $A-\la$ is complete in $Q_\la\calH$. The drawback of that approach is that it practically requires the knowledge of the entire Jordan structure of $A$. Here, we provide another necessary and sufficient condition which is easier to check (cf.\ Theorem \ref{t:main_findim}). In fact, the projection $Q_\la$ from above can be replaced by {\em any} projection onto a complementary subspace of the image of $A-\la$. Hence, $Q_\la$ can be replaced by the orthogonal projection onto $\ker(A^*-\ol\la)$.

Concerning the infinite-dimensional situation, the most interesting result in \cite{acmt} addresses the one-vector problem (i.e., $|I|=1$). This result was further improved in \cite{accmp} and \cite{ap}. Its final version reads as follows: {\it If $A$ is a normal operator, the system $(A^nf)_{n\in\N}$ is a frame for $\calH$ if and only if {\rm (a)} $A = \sum_{in\N}\la_j\<\,\cdot\,,e_j\>e_j$ with an ONB $(e_j)_{j\in\N}$, {\rm (b)} the sequence $(\la_j)_{j\in\N}$ is uniformly separated in the unit disk \braces{cf.\ page \pageref{p:us}}, and {\rm (c)} the sequence $(|\<f,e_j\>|^2/(1-|\la_j|^2)_{j\in\N}$ is bounded from below and above.} One of the aims of this paper is to generalize this result to arbitrary finite index sets $I$. However, this problem turns out to be more difficult to tackle than one might think at first glance -- the attempt of using the same techniques as in the case $|I|=1$ terribly fails. Nevertheless, we find the right methods to deal with the new situation (see Theorem \ref{t:characterization}). Three conditions in our characterization are generalizations of the conditions (a)--(c) above in the one-vector case. But indeed one has to add a fourth condition which is trivially satisfied when $|I|=1$.

Very little is known on the Dynamical Sampling problem for general non-normal bounded operators $A$. It was only proved in \cite{ap} that for $(A^nf_i)_{n\in\N,\,i\in I}$ to be a frame for $\calH$ it is necessary that $A^*$ be strongly stable, i.e., $(A^*)^nf\to 0$ as $n\to\infty$ for each $f\in\calH$. Here, we complete this condition to a characterizing set of three conditions (cf.\ Theorem \ref{t:charac}). Using this theorem, we completely characterize the class of all operators $A$ for which there exists some finite set $\{f_i : i\in I\}$ such that $(A^nf_i)_{n\in\N,\,i\in I}$ is a frame for $\calH$. In fact, these are the operators that are similar to a strongly stable contraction $T$ for which $\Id - TT^*$ is of finite rank. We also characterize the Riesz bases of the form $(A^nf_i)_{n\in\N,\,i\in I}$ when $I$ is finite. In this case, the operator $A$ has to be similar to the $|I|$-th power of the unilateral shift in $\ell^2(\N)$.

\vspace*{.2cm}
\subsection{Outline}
The present paper is organized as follows. Section \ref{s:normal} contains our main results concerning Dynamical Sampling on finite index sets with normal operators, including the above-mentioned characterization consisting of four conditions. In Section \ref{s:general} we drop the requirement that $A$ be normal and provide our results, summarized above, for this much more general setting. In Section \ref{s:findim} we deal with the finite-dimensional situation and prove a characterization result in which the condition can be very easily checked.

\vspace*{.2cm}
\subsection{Notation}
We conclude this Introduction by fixing the notation that we shall use throughout this paper. By $\N$ we denote the set of the natural numbers {\it including zero}. Unit circle and open unit disk in $\C$ are denoted by $\T$ and $\D$, respectively, i.e.,
$$
\T = \{z\in\C : |z|=1\}\qquad\text{and}\qquad\D = \{z\in\C : |z| < 1\}.
$$
The $p$-th Hardy space on the unit disk, $1\le p\le\infty$, is denoted by $H^p(\D)$. Recall that especially $H^2(\D)$ consists of those functions that have a representation $\vphi(z) = \sum_{n=0}^\infty c_nz^n$, $z\in\D$, where $c = (c_n)_{n\in\N}\in\ell^2(\N)$, and that $\|\vphi\|_{H^2(\D)} = \|c\|_2$.

Throughout, $\calH$ stands for a separable Hilbert space. If $\calK$ is another Hilbert space, by $L(\calH,\calK)$ we denote the set of all bounded linear operators from $\calH$ to $\calK$ which are defined on all of $\calH$. As usual, we set $L(\calH) := L(\calH,\calH)$. The kernel (i.e., the null-space) and the range (i.e., the image) of $T\in L(\calH)$ are denoted by $\ker T$ and $\ran T$, respectively.

\vspace{.2cm}
\section{Dynamical Sampling with normal operators}\label{s:normal}
In this section we investigate sequences of the form $(A^nf_i)_{n\in\N,\,i\in I}$ where $A$ is a bounded normal operator in $\calH$, $I$ an at most countable index set, and $(f_i)_{i\in I}\subset\calH$. The spectral measure of $A$ will be denoted by $E$. Throughout, we set
\begin{equation}\label{e:calA}
\calA := \calA(A,(f_i)_{i\in I}) := (A^nf_i)_{n\in\N,\,i\in I}.
\end{equation}
In the sequel, we will often be dealing with diagonal operators -- a special class of normal operators -- which we define as follows.

\begin{defn}
A {\em diagonal operator} in $\calH$ is of the form $A = \sum_{j\in J}\la_jP_j$ (the series converging in the strong operator topology), where $J$ is a finite or countable index set, $(\la_j)_{j\in J}\subset\C$ a bounded sequence of scalars, and $(P_j)_{j\in J}$ a sequence of orthogonal projections with $P_jP_k = 0$ for $j\neq k$. The series $\sum_{j\in J}\la_jP_j$ is called a {\em normal form} of $A$ if $\la_j\neq\la_k$ for $j\neq k$ and $\sum_{j\in J}P_j = \Id$. The {\em multiplicity} of a diagonal operator $A$ is defined by
$$
\mult(A) := \max\{\dim P_j\calH : j\in J\},
$$
where $(P_j)_{j\in J}$ is the sequence of orthogonal projections in a normal form of $A$. If the maximum should not exist, we set $\mult(A) := \infty$ and say that $A$ has infinite multiplicity.
\end{defn}

The normal form of a diagonal operator is obviously unique up to permutations of $J$. Moreover, it is clear that the $\la_j$ in the normal form of $A$ are the distinct eigenvalues of $A$ and that $P_j$ projects onto the eigenspace $\ker(A - \la_j)$. Note that every diagonal operator is bounded and normal. If $f\in\calH$ and $E$ denotes the spectral measure of $A$, the measure $\mu_f := \|E(\cdot)f\|^2$ obviously takes the form
\begin{equation}\label{e:mu_f}
\mu_f = \sum_{j\in J}\delta_{\la_j}\|P_jf\|^2.
\end{equation}
Recall that the {\em pseudo-hyperbolic metric} $\varrho$ on the open unit disk is defined by
\begin{equation*}
\varrho(z,w) := \left|\frac{z-w}{1-z\ol w}\right|,\quad z,w\in\D.
\end{equation*}
Since
\begin{equation}\label{e:hyp_identity}
|1-z\ol w|^2 = |z-w|^2 + (1-|z|^2)(1-|w|^2),
\end{equation}
we always have $\varrho(z,w) < 1$. It is well known that $\varrho$ is indeed a metric on $\D$. For $z\in\D$ and $r > 0$ by $\BB_r(z)$ we denote the pseudo-hyperbolic ball ($\varrho$-ball) in $\D$ of radius $r$ and center $z$, i.e.,
$$
\BB_r(z) = \{\la\in\D : \varrho(\la,z) < r\}.
$$
Note that $\BB_r(z) = \{\la\in\D : |\la-z'| < r'\}$ with certain $r'< r$ and $z' = tz$, where $t < 1$.

A sequence $\Lambda = (\la_j)_{j\in\N}$ in the open unit disk $\D$ is called {\em separated} if
$$
\inf_{j\neq k}\varrho(\la_j,\la_k) > 0.
$$
The sequence $\Lambda$ is called {\em uniformly separated} if\label{p:us}
$$
\inf_{n\in\N}\,\prod_{j\neq k}\varrho(\la_j,\la_k)\,>\,0.
$$
Obviously, a uniformly separated sequence is separated. We refer to Appendix \ref{a:sequences} for more detailed relationships between these notions. The next theorem was proved in \cite[Theorem 3.14]{acmt}.

\begin{thm}\label{t:one}
Let $A = \sum_{j=0}^\infty\la_jP_j$ be a diagonal operator in normal form and $f\in\calH$. Then $(A^nf)_{n\in\N}$ is a frame for $\calH$ if and only if the following statements hold:
\begin{enumerate}
\item[{\rm (i)}]   $(\la_j)_{j\in\N}$ is a uniformly separated sequence in $\D$.
\item[{\rm (ii)}]  $\dim P_j\calH = 1$ for all $j\in\N$.
\item[{\rm (iii)}] There exist $\alpha,\beta > 0$ such that
$$
\alpha\,\le\,\frac{\|P_jf\|^2}{1 - |\la_j|^2}\,\le\,\beta\quad\text{for all $j\in\N$}.
$$
\end{enumerate}
\end{thm}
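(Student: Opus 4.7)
The plan is to characterize the frame property of $(A^nf)_{n\in\N}$ by transferring it to a condition on normalized reproducing kernels in the Hardy space $H^2(\D)$ and then invoking the Shapiro--Shields interpolation theorem. I would argue necessity and sufficiency in parallel. The multiplicity bound (ii) is necessary already for completeness: if $\dim P_{j_0}\calH\geq 2$, pick a nonzero $u\in P_{j_0}\calH$ with $u\perp P_{j_0}f$; normality gives $A^*u=\bar\la_{j_0}u$, so that
\begin{equation*}
\<A^nf,u\>\;=\;\la_{j_0}^n\<f,u\>\;=\;\la_{j_0}^n\<P_{j_0}f,u\>\;=\;0\qquad(n\in\N),
\end{equation*}
contradicting completeness. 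Granting (ii), I fix an orthonormal eigenbasis $(e_j)_{j\in\N}$ with $Ae_j=\la_j e_j$ and expand $f=\sum_j\alpha_je_j$, $g=\sum_j\beta_je_j$, so that $\<A^nf,g\>=\sum_j\la_j^n\alpha_j\bar\beta_j$.

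The crucial reformulation is then (by Parseval for Taylor series on $\D$)
\begin{equation*}
\sum_{n=0}^\infty|\<A^nf,g\>|^2\;=\;\|F_g\|_{H^2}^2,\qquad F_g(z):=\sum_j\frac{\alpha_j\bar\beta_j}{1-\la_jz}\,.
\end{equation*}
Testing the frame bounds at $g=e_j$, where $F_{e_j}(z)=\alpha_j/(1-\la_jz)$ has squared $H^2$-norm $|\alpha_j|^2/(1-|\la_j|^2)$, immediately forces $|\la_j|<1$ for every $j$ and reproduces (iii). With $|\la_j|<1$ secured I rewrite $F_g=\sum_j y_j\,\wh k_{\bar\la_j}$, where $\wh k_w(z):=\sqrt{1-|w|^2}/(1-\bar wz)$ is the normalized Szeg\H o kernel at $w\in\D$ and
\begin{equation*}
y_j\;:=\;\frac{\alpha_j\bar\beta_j}{\sqrt{1-|\la_j|^2}}\,.
\end{equation*}
Condition (iii) says precisely that $(\beta_j)\mapsto(y_j)$ is a conjugate-linear bijection of $\ell^2(\N)$ with two-sided norm bounds, so the frame inequalities for $(A^nf)_{n\in\N}$ translate into
\begin{equation*}
A'\|(y_j)\|_{\ell^2}^2\;\le\;\Bigl\|\sum_jy_j\wh k_{\bar\la_j}\Bigr\|_{H^2}^2\;\le\;B'\|(y_j)\|_{\ell^2}^2\qquad\bigl((y_j)\in\ell^2(\N)\bigr),
\end{equation*}
that is, $(\wh k_{\bar\la_j})_j$ is a Riesz basic sequence in $H^2(\D)$.

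A classical disk-theoretic result now completes the picture: the Shapiro--Shields theorem asserts that a family $(\wh k_{w_j})_j$ of normalized Szeg\H o kernels is a Riesz basic sequence in $H^2(\D)$ if and only if $(w_j)$ is uniformly separated. Because the pseudo-hyperbolic metric satisfies $\varrho(\bar z,\bar w)=\varrho(z,w)$, uniform separation of $(\bar\la_j)$ is equivalent to condition (i), and assembling the three reductions proves both directions of the theorem. The step I expect to be the main obstacle is setting up this correspondence cleanly: one must simultaneously justify the Parseval identity for $F_g$ (which itself dictates $|\la_j|<1$) and use (iii) to absorb the weights $(1-|\la_j|^2)^{-1/2}$ so that Shapiro--Shields becomes directly applicable; once this framework is in place, the rest is bookkeeping.
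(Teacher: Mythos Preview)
Your proposal is correct. Note, however, that the paper does not give its own proof of this theorem: it is quoted from \cite[Theorem~3.14]{acmt}. That said, your argument is precisely the $|I|=1$ specialization of the paper's Theorem~\ref{t:riesz}, which is the technical core of the general characterization Theorem~\ref{t:characterization}. Both reduce the frame property of $(A^nf)_n$ to the Riesz-sequence property of a system of weighted normalized Szeg\H{o} kernels and then appeal to Shapiro--Shields (Theorem~\ref{t:interpolating}). The only cosmetic difference is the side of the frame inequality you work from: you compute $\sum_n|\<g,A^nf\>|^2$ directly as $\|F_g\|_{H^2}^2$ via the generating function and land on the kernels $\wh k_{\bar\la_j}=K_{\bar\la_j}$, whereas the paper (in the proof of Theorem~\ref{t:riesz}) identifies the \emph{synthesis} operator of $\calA$ with the analysis operator of $(\phi_{jl})=(\vec y_{jl}K_{\la_j})$ via the unitary $U:\ell^2\to H^2$, and hence lands on $K_{\la_j}$. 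As you observe, conjugation invariance of $\varrho$ makes the two equivalent. Your handling of the order-of-operations issue (first extracting $|\la_j|<1$ and \textup{(iii)} from $g=e_j$, then using \textup{(iii)} to turn $(\beta_j)\mapsto(y_j)$ into a two-sided $\ell^2$-isomorphism so that Shapiro--Shields applies directly) is exactly the right way to close the loop; the paper's general version absorbs this step into the statement that the frame bounds of $\calA$ coincide with the Riesz bounds of $\Phi$.
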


Note that the system $(A^nf)_{n\in\N}$ in Theorem \ref{t:one} corresponds to systems of the form $\calA$ in \eqref{e:calA} with the index set $I$ being a singleton, i.e., $|I|=1$. In this section it is our aim to generalize Theorem \ref{t:one} to arbitrary finite index sets $I$ (see Theorem \ref{t:characterization} below). Although this might seem to be a trivial task, our treatment shows that this is not the case.

In order to formulate our main result Theorem \ref{t:characterization}, it is necessary to introduce a few more notions concerning sequences in the unit disk. For a sequence $\Lambda = (\la_j)_{j\in\N}\subset\D$ in the open unit disk we agree to write
\begin{equation}\label{e:veps}
\veps_j := \sqrt{1 - |\la_j|^2},\qquad j\in\N.
\end{equation}
Although $\veps_j$ depends on $\Lambda$, it will always be clear which $\Lambda$ it refers to. We also define the linear {\em evaluation operator}
\begin{equation}\label{e:eval1}
T_\Lambda : H^2(\D)\supset D(T_\Lambda)\to\ell^2(\N),\qquad T_\Lambda\vphi := (\veps_j\vphi(\la_j))_{j\in\N},
\end{equation}
on its natural domain
\begin{equation}\label{e:eval2}
D(T_\Lambda) := \left\{\vphi\in H^2(\D) : (\veps_j\vphi(\la_j))_{j\in\N}\in\ell^2(\N)\right\}.
\end{equation}
Note that $\veps_j\vphi(\la_j) = \<\vphi,K_{\la_j}\>_{H^2(\D)}$, where
\begin{equation}\label{e:K}
K_\la(z) = \sqrt{1 - |\la|^2}\sum_{n=0}^\infty\ol\la^nz^n = \frac{\sqrt{1 - |\la|^2}}{1 - \ol\la z},\qquad\la,z\in\D,
\end{equation}
is the normalized reproducing kernel of $H^2(\D)$. Hence, the operator $T_\Lambda$ is the analysis operator corresponding to the sequence $(K_{\la_j})_{j\in\N}$. As every analysis operator is closed on its natural domain, it follows from the closed graph theorem that $T_\Lambda$ is a bounded operator from $H^2(\D)$ to $\ell^2(\N)$ if and only if $D(T_\Lambda) = H^2(\D)$.

The next theorem is the main result in this section. For a Bessel sequence $E$ in $\calH$ we let $C_E$ denote the analysis operator of $E$.

\begin{thm}\label{t:characterization}
If $|I|$ is finite, then the system $\calA = (A^nf_i)_{n\in\N,\,i\in I}$ is a frame for $\calH$ if and only if the following conditions are satisfied:
\begin{enumerate}
\item[{\rm (i)}]   $A = \sum_{j=0}^\infty\la_jP_j$ is a diagonal operator \braces{in normal form} having multiplicity $\mult(A)\le |I|$.
\item[{\rm (ii)}]  $\Lambda = (\la_j)_{j\in\N}$ is a union of $|I|$ uniformly separated sequences in $\D$.
\item[{\rm (iii)}] There exist $\alpha,\beta > 0$ such that for all $j\in\N$ and all $h\in P_j\calH$ we have
\begin{equation}\label{e:fs}
\alpha(1-|\la_j|^2)\|h\|^2\,\le\,\sum_{i\in I}|\<h,P_jf_i\>|^2\,\le\,\beta(1-|\la_j|^2)\|h\|^2.
\end{equation}
\item[{\rm (iv)}]  $(\ran T_{\Lambda})^{|I|} + \ker C_E^* = \ell^2(I\times\N)$, where $E := ((1-|\la_j|^2)^{-1/2}P_jf_i)_{j\in\N,\,i\in I}$.
\end{enumerate}
\end{thm}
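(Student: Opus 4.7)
The plan is to handle both directions through the same factorization of the analysis operator $\Phi:\calH\to\ell^2(I\times\N)$, $\Phi g:=(\langle g,A^nf_i\rangle)_{n,i}$. Assuming for the moment that $A$ is diagonal, $A=\sum_j\la_j P_j$ in normal form, the expansion $\langle g,A^nf_i\rangle=\sum_j\ol{\la_j}^{\,n}\langle g,P_jf_i\rangle$ yields, upon dualization, the identity
\[
\Phi^*c=C_E^*\bigl(T_\Lambda^{\oplus|I|}\,\hat c\,\bigr),\qquad \hat c_i(z):=\sum_{n\ge 0}c_{n,i}z^n\in H^2(\D),
\]
where $E$ is the sequence from (iv) and $T_\Lambda^{\oplus|I|}:H^2(\D)^{|I|}\to\ell^2(I\times\N)$ is $T_\Lambda$ acting componentwise. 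Since $\calA$ is a frame iff $\Phi^*$ is bounded and surjective, the four conditions should turn out to encode exactly: diagonality of $A$ (so the factorization makes sense), boundedness of $T_\Lambda^{\oplus|I|}$, boundedness above and below of $C_E$, and a range condition making the composition surjective.

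For sufficiency, I assume (i)--(iv). Condition (iii) is precisely the statement that $E$ is a frame for $\calH$; this makes $C_E^*$ bounded and surjective with $\ker C_E^*=(\ran C_E)^\perp$. Condition (ii), combined with the classical characterization of $H^2(\D)$ Carleson sequences as finite unions of uniformly separated ones, makes $T_\Lambda$ (and hence $T_\Lambda^{\oplus|I|}$) bounded, which together with the boundedness of $C_E^*$ yields the upper frame bound for $\calA$. The lower frame bound is equivalent to $\ran\Phi^*=\calH$, which via the factorization reads $C_E^*\bigl((\ran T_\Lambda)^{|I|}\bigr)=\calH$; by the already-noted surjectivity of $C_E^*$, this is equivalent to $(\ran T_\Lambda)^{|I|}+\ker C_E^*=\ell^2(I\times\N)$---which is precisely condition (iv).

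For necessity, I assume $\calA$ is a frame. The upper frame bound first forces strong stability of $A^*$, hence $\|A\|\le 1$ and $\sigma(A)\subset\ol\D$. The crucial structural step is to show that $A$ is diagonal with $\mult(A)\le|I|$: I would pass to the spectral multiplicity model for $A$, localize the frame inequality on spectral subspaces corresponding to small Borel subsets of $\ol\D$, and rule out continuous parts of the scalar spectral measure by exploiting that $\{z^n\}_{n\ge 0}$ cannot form a frame in $L^2(\nu,\C^m)$ when $\nu$ is non-atomic, regardless of the number $m\le|I|$ of generators. Once (i) is in hand, restricting the frame inequality to $h\in P_j\calH$ gives
\[
\sum_{n,i}|\langle h,A^nf_i\rangle|^2=\Bigl(\sum_{i\in I}|\langle h,P_jf_i\rangle|^2\Bigr)\sum_{n\ge 0}|\la_j|^{2n}=\frac{\sum_{i\in I}|\langle h,P_jf_i\rangle|^2}{1-|\la_j|^2},
\]
and the two-sided frame bound delivers (iii). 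Condition (ii) then follows from the upper frame bound via the factorization and another appeal to the Carleson characterization, while (iv) is immediate from the surjectivity of $\Phi^*$ combined with the same factorization.

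The hard part will be the derivation of (i) in the necessity direction. The one-vector argument behind Theorem \ref{t:one} relies on the cyclic model $A\simeq M_z$ on $L^2(\mu)$, which reduces the problem to a one-dimensional analysis at each eigenvalue. With $|I|>1$ the cyclic subspaces $\overline{\linspan}\{A^nf_i\}_n$ can overlap non-trivially, no individual $f_i$ need be cyclic, and the multiplicity in the spectral model can be as large as $|I|$. Ruling out continuous spectrum therefore requires an argument handling several generators simultaneously---a plausible route is to combine a Carleson-type embedding for vector-valued $H^2$ with a careful analysis of the scalar spectral measures $\mu_{f_i}$ on spectral subspaces on which the spectral measure has no atoms. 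All remaining steps (sufficiency, and the derivation of (ii)--(iv) from (i)) are then, modulo bookkeeping, consequences of the factorization and classical Carleson/interpolation theory in $H^2(\D)$.
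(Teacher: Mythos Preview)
Your factorization $\Phi^* = C_E^*\circ T_\Lambda^{\oplus|I|}$ is exactly the backbone of the paper's argument, and your treatment of sufficiency and of the necessity of (iii) and (iv) matches the paper's proof essentially line for line. For (i) the paper simply cites \cite[Corollary~1]{ap}, so your more ambitious plan to redo that step is unnecessary here. (One minor slip: strong stability of $A^*$ yields $r(A)\le 1$ via power-boundedness, not $\|A\|\le 1$.)

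The genuine gap is in your derivation of (ii). From the upper frame bound and the factorization you correctly get that $T_\Lambda$ is bounded on $H^2(\D)$, and Carleson theory (Theorem~\ref{t:finite_union}) then says $\Lambda$ is a \emph{finite} union of uniformly separated sequences. But (ii) asserts more: the number of pieces is at most $|I|$, i.e.\ $\ind(\Lambda)\le|I|$. This is not bookkeeping---it is the most delicate step in the paper's proof and does not follow from the factorization or from Carleson theory alone (Remark~\ref{r:counterex} constructs examples in which $\ind(\Lambda)=|I|$ is attained). The paper's mechanism is genuinely new: first, Theorem~\ref{t:riesz} shows that the frame property of $\calA$ forces $(\vec y_{jl}K_{\la_j})_{j,l}$ to be a \emph{Riesz sequence} in $H^2_I$ with Riesz bounds equal to the frame bounds $\alpha,\beta$; then, if some $\varrho$-ball of small radius $r$ contained $m+1$ points of $\Lambda$ (where $m=|I|$), the $m+1$ vectors $\vec y_j\in\C^m$ would be linearly dependent, and combining this with the Lipschitz estimate $\|K_z-K_w\|_{H^2(\D)}\le\sqrt 2\,\varrho(z,w)$ (Lemma~\ref{l:lipschitz}) contradicts the lower Riesz bound once $r<\sqrt{\alpha/(8\beta|I|)}$. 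Hence every small $\varrho$-ball contains at most $|I|$ points of $\Lambda$, so $\ind(\Lambda)\le|I|$ by Lemma~\ref{l:index}, and Theorem~\ref{t:durenschuster} upgrades ``separated'' to ``uniformly separated'' since $\Lambda$ is already Carleson. Your proposal contains no ingredient that could produce this quantitative bound, so this argument (or an alternative) must be added.
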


Before we head towards the proof of Theorem \ref{t:characterization}, let us first make a few remarks.

\begin{rem}\label{r:jojo}
(a) The necessity of condition (i) in Theorem \ref{t:characterization} for $\calA$ to be a frame for $\calH$ was already proved in \cite{ap}.

\smallskip
(b) Condition (iii) means that for each $j\in\N$ the finite system $((1-|\la_j|^2)^{-1/2}P_jf_i)_{i\in I}$ is a frame for $P_j\calH$ with frame bounds $\alpha$ and $\beta$. Since the frame bounds are independent of $j\in\N$, condition (iii) is equivalent to saying that the system $E = ((1-|\la_j|^2)^{-1/2}P_jf_i)_{j\in\N,\,i\in I}$ is a frame for $\calH$ with frame bounds $\alpha$ and $\beta$.

\smallskip
(c) Here and in the sequel, we will make use of the following notion:
\begin{equation}\label{e:TLaI}
T_{\Lambda,I} := \bigoplus_{i\in I}T_\Lambda.
\end{equation}
That is, $T_{\Lambda,I}$ is a closed linear operator mapping from $D(T_{\Lambda,I}) = D(T_\Lambda)^{|I|}\subset H_I^2$ (here, $H_I^2 = (H^2(\D))^{|I|}$) to $(\ell^2(\N))^{|I|} = \ell^2(I\times\N)$. Hence, $(\ran T_\Lambda)^{|I|} = \ran T_{\Lambda,I}$. Since $E$ is a frame for $\calH$, the relation $\ran T_{\Lambda,I} + \ker C_E^* = \ell^2(I\times\N)$ in (iv) can be equivalently replaced by
$$
C_E^*\ran T_{\Lambda,I} = \calH.
$$
Indeed, assume that the relation in (iv) holds. As $E$ is a frame for $\calH$, for any $h\in\calH$ there exists $c\in\ell^2(I\times\N)$ such that $C_E^*c = h$. Now, $c = c_1+c_2$ with $c_1\in\ran T_{\Lambda,I}$ and $c_2\in\ker C_E^*$. Hence, $h = C_E^*c_1\in C_E^*\ran T_{\Lambda,I}$. Conversely, if $C_E^*\ran T_{\Lambda,I} = \calH$ and $c\in\ell^2(I\times\N)$, then $C_E^*c = C_E^*T_{\Lambda,I}h$ for some $h\in\calH$. Thus, $c - T_{\Lambda,I}h\in\ker C_E^*$, i.e., $c\in\ran T_{\Lambda,I} + \ker C_E^*$.

\smallskip
(d) If (ii) holds and $\la_i\neq\la_j$ for $i\neq j$ (which follows from (i)), then $(\ran T_{\Lambda})^{|I|}$ in (iv) is dense in $\ell^2(I\times\N)$. Indeed, since (ii) implies that the operator $T_\Lambda$ is bounded and everywhere defined on $H^2(\D)$ (cf.\ Theorem \ref{t:finite_union}), the claim follows from Lemma \ref{l:dense}.

\smallskip
(e) Note that (ii) does not prevent $\Lambda$ to be a union of less than $|I|$ uniformly separated sequences because each subsequence of a uniformly separated sequence is also uniformly separated. Hence, $\Lambda$ might even be uniformly separated itself. In this case, we know that $\ran T_\Lambda = \ell^2(\N)$ (see Theorem \ref{t:interpolating}), so that condition (iv) is trivially satisfied.

\smallskip
(f) As noted in the last remark, (iv) follows from (ii) if $\Lambda$ is uniformly separated. In particular, (iv) is not necessary to state in the case $|I|=1$ (cf.\ Theorem \ref{t:one}). However, if $|I|>1$, condition (iv) does in general not follow from (i)--(iii). As an example, choose a sequence $\Lambda = (\la_j)_{j\in\N}$ which is a union of no more than $|I|$ uniformly separated sequences, but is not uniformly separated itself. In addition, choose orthogonal projections $P_j$ such that $\sum_{j=0}^\infty P_j = \Id$ (in the strong sense) and $\dim P_j\calH = |I|$ for each $j\in\N$ as well as orthonormal bases $(g_{ij})_{i\in I}$ for $P_j\calH$, $j\in\N$. Now, define $A := \sum_{j=0}^\infty\la_j P_j$ and $f_i := \sum_{j=0}^\infty\veps_jg_{ij}$, $i\in I$, where $\veps_j = \sqrt{1-|\la_j|^2}$. Then conditions (i)--(iii) in Theorem \ref{t:characterization} are satisfied, but (iv) is not as $E = (\veps_j^{-1}P_jf_i)_{j\in\N,\,i\in I} = (g_{ij})_{j\in\N,\,i\in I}$ is an orthonormal basis of $\calH$ (and thus $\ker C_E^* = \{0\}$) and $\Lambda$ is not uniformly separated (i.e., $\ran T_\Lambda\neq\ell^2(\N)$).
\end{rem}

\
\\
For an at most countable index set $I$ we define $H^2_I := \bigoplus_{i\in I}H^2(\D)$. This is the space of tuples $\phi = (\vphi_i)_{i\in I}$, where $\vphi_i\in H^2(\D)$ for each $i\in I$, such that $\sum_{i\in I}\|\vphi_i\|_{H^2(\D)}^2 < \infty$. One defines $\|\phi\|_{H^2_I} := (\sum_{i\in I}\|\vphi_i\|_{H^2(\D)}^2)^{1/2}$. In addition, we shall write the tensor product of a sequence $\vec y = (y_i)_{i\in I}\subset\C$ and a function $\vphi\in H^2(\D)$ as $\vec y\vphi$ (i.e., $(\vec y\vphi)(z) = (y_i\vphi(z))_{i\in I}$, $z\in\D$). The result is an element of $H^2_I$ if and only if $\vec y\in\ell^2(I)$. In this case,
\begin{equation}\label{e:prodnorm}
\|\vec y\vphi\|_{H^2_I} = \|\vec y\|_2\|\vphi\|_{H^2(\D)}.
\end{equation}

The following theorem will be used in the proof of Theorem \ref{t:characterization}. However, it might be of independent interest. Here, the index set $I$ is allowed to be infinite.

\begin{thm}\label{t:riesz}
Let $A = \sum_{j=0}^\infty\la_jP_j$ be a diagonal operator in normal form with $(\la_j)_{j\in\N}\subset\D$ and let $f_i\in\calH$, $i\in I$. For $j\in\N$ we put $n_j := \dim P_j\calH$ \braces{where possibly $n_j = \infty$} and $L_j := \{1,\ldots,n_j\}$. Moreover, let $(e_{jl})_{l=1}^{n_j}$ be an orthonormal basis of $P_j\calH$ and for $l\in L_j$ define $\vec y_{jl} := \veps_j^{-1}(\<e_{jl},f_i\>)_{i\in I}$ as well as $\phi_{jl} := \vec y_{jl}K_{\la_j}$. Then the following statements hold.
\begin{enumerate}
\item[{\rm (i)}]  $\calA$ is a Bessel sequence in $\calH$ if and only if $\vec y_{jl}\in\ell^2(I)$ \braces{i.e., $\phi_{jl}\in H_I^2$} for each $j\in\N$ and each $l\in L_j$ and $\Phi = (\phi_{jl})_{j\in\N,\,l\in L_j}$ is a Bessel sequence in $H_I^2$. In this case, the Bessel bounds of $\calA$ and $\Phi$ coincide.
\item[{\rm (ii)}] $\calA$ is a frame for $\calH$ if and only if $\vec y_{jl}\in\ell^2(I)$ for each $j\in\N$ and each $l\in L_j$ and $\Phi = (\phi_{jl})_{j\in\N,\,l\in L_j}$ is a Riesz sequence in $H_I^2$. In this case, the frame bounds of $\calA$ coincide with the Riesz bounds of $\Phi$.
\end{enumerate}
\end{thm}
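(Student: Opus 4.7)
The idea is to factor both analysis operators, $C_\calA$ and $C_\Phi$, through one common candidate sequence $\{V_{jl}\}$ in $\ell^2(\N\times I)$, after which Bessel- and Riesz-type bounds transfer between $\calA$ and $\Phi$ with matching constants. I start by fixing two canonical unitary identifications: $J\colon\calH\to\ell^2(\{(j,l):j\in\N,\,l\in L_j\})$, $Jg:=(\<g,e_{jl}\>)_{jl}$ (unitary because $(e_{jl})$ is an ONB of $\calH$), and $U\colon H_I^2\to\ell^2(\N\times I)$ sending $\phi=(\vphi_i)_{i\in I}$ to its tuple of Taylor coefficients (unitary by Parseval in each factor).

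The key computation expands $A^nf_i$ in the ONB $(e_{jl})$. Since $e_{jl}\in P_j\calH$ yields $\<e_{jl},A^nf_i\>=\ol{\la_j}^{\,n}\<e_{jl},f_i\>=\veps_j\ol{\la_j}^{\,n}(\vec y_{jl})_i$, one arrives at
$$
\<g,A^nf_i\>=\sum_{jl}(Jg)_{jl}\,(V_{jl})_{n,i},\qquad (V_{jl})_{n,i}:=\veps_j\ol{\la_j}^{\,n}(\vec y_{jl})_i.
$$
A geometric sum gives $\sum_{n,i}|(V_{jl})_{n,i}|^2=\|\vec y_{jl}\|_{\ell^2(I)}^2=\|\phi_{jl}\|_{H_I^2}^2$, so $V_{jl}\in\ell^2(\N\times I)$ iff $\vec y_{jl}\in\ell^2(I)$ iff $\phi_{jl}\in H_I^2$. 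The mirror calculation on the $H_I^2$ side, using $\vphi_i(\la_j)=\sum_nc_{n,i}\la_j^n$ for $c=U\phi$, yields
$$
\<\phi,\phi_{jl}\>_{H_I^2}=\veps_j\sum_i\ol{(\vec y_{jl})_i}\vphi_i(\la_j)=\<U\phi,V_{jl}\>_{\ell^2(\N\times I)}.
$$
Hence $C_\calA=T_V\circ J$ and $C_\Phi=A_V\circ U$, where $T_V$ and $A_V=T_V^*$ are the synthesis and analysis operators of the family $\{V_{jl}\}$ in $\ell^2(\N\times I)$.

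For part (i), I first deduce $\vec y_{jl}\in\ell^2(I)$ from Bessel-ness of $\calA$ by testing the Bessel inequality on the unit vector $g=e_{jl}$: a direct geometric sum gives $\|\vec y_{jl}\|_{\ell^2(I)}^2=\sum_{n,i}|\<e_{jl},A^nf_i\>|^2\le\beta$. With $V_{jl}$ now in $\ell^2$, the factorizations above together with the unitarity of $J$ and $U$ force $\|C_\calA\|=\|T_V\|=\|A_V\|=\|C_\Phi\|$, proving (i) with matching Bessel bounds. For part (ii), the frame inequalities $\alpha\|g\|^2\le\|C_\calA g\|^2\le\beta\|g\|^2$ transfer through the unitary $J$ to $\alpha\|a\|^2\le\|T_Va\|^2\le\beta\|a\|^2$ for every $a\in\ell^2(\{(j,l)\})$, which is exactly the statement that $\{V_{jl}\}$ is a Riesz sequence in $\ell^2(\N\times I)$ with bounds $\alpha,\beta$; this transports verbatim through the unitary $U^{-1}$, which sends $V_{jl}$ to $\phi_{jl}$, giving the desired Riesz-sequence statement for $\Phi$ in $H_I^2$. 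The main conceptual step is spotting the correct intermediate family $\{V_{jl}\}$; once it is in place, (i) and (ii) reduce to the routine fact that boundedness and two-sided boundedness of an analysis/synthesis pair are preserved under unitary equivalence. The one minor subtlety is the order of operations in the forward direction, since $V_{jl}$ must be known to lie in $\ell^2$ before the factorization is meaningful, but this is automatic because every frame is in particular Bessel.
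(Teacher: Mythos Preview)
Your proof is correct and follows essentially the same route as the paper's. The paper computes directly that $\langle Tx,e_{jl}\rangle = \langle Ux,\phi_{jl}\rangle$ (where $T$ is the synthesis operator of $\calA$ and $U$ the coefficient-to-$H_I^2$ unitary), concluding $T=CU$ with the identification of $\calH$ and $\ell^2(\{(j,l)\})$ left implicit; you make that identification explicit via $J$ and name the intermediate family $V_{jl}=U\phi_{jl}$, but the underlying calculation and the transfer of bounds through the unitaries are the same.
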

\begin{proof}
In the following we will often make use of the unitary operator $U : \ell^2(I\times\N)\to H_I^2$, defined by $Ux = (\vphi_i)_{i\in I}$, where $\vphi_i(z) = \sum_{n\in\N}x_{in}z^n$, $z\in\D$.

Assume that $\calA$ is a Bessel sequence in $\calH$ with Bessel bound $\beta > 0$. Then for $h\in\calH$ we have that
$$
\sum_{n=0}^\infty\sum_{i\in I}|\<h,A^nf_i\>|^2\,\le\,\beta\|h\|_2^2.
$$
Let $j\in\N$ and $l\in L_j$. Since for $h = e_{jl}$ we have
$$
\sum_{n=0}^\infty\sum_{i\in I}|\<h,A^nf_i\>|^2 = \sum_{n=0}^\infty\sum_{i\in I}|\la_j|^{2n}|\<e_{jl},f_i\>|^2 = \sum_{i\in I}\frac{|\<e_{jl},f_i\>|^2}{1-|\la_j|^2},
$$
it follows that $\sum_{i\in I}\veps_j^{-2}|\<e_{jl},f_i\>|^2\le\beta$, i.e., $\vec y_{jl}\in\ell^2(I)$. Let $\psi = (\psi_i)_{i\in I}\in H^2_I$ and put $x := U^{-1}\psi\in\ell^2(I\times\N)$. Denote the synthesis operator of $\calA$ by $T$. Then for each $j\in\N$ and $l\in L_j$ we have
\begin{align*}
\<Tx,e_{jl}\>
&=\left\<\sum_{i\in I}\sum_{n=0}^\infty x_{in}A^nf_i,e_{jl}\right\> = \sum_{i\in I}\sum_{n=0}^\infty x_{in}\la_j^n\<f_i,e_{jl}\>\\
&= \sum_{i\in I}\psi_i(\la_j)\<f_i,e_{jl}\> = \sum_{i\in I}\frac{\<f_i,e_{jl}\>}{\veps_j}\left\<\psi_i,K_{\la_j}\right\>\\
&= \sum_{i\in I}\left\<\psi_i,\frac{\<e_{jl},f_i\>}{\veps_j}K_{\la_j}\right\> = \<\psi,\phi_{jl}\>.
\end{align*}
Thus, $\sum_{j=0}^\infty\sum_{l=1}^{n_j}|\<\psi,\phi_{jl}\>|^2 = \sum_{j=0}^\infty\sum_{l=1}^{n_j}|\<Tx,e_{jl}\>|^2 = \|Tx\|^2$, which implies that the sequence $(\phi_{jl})_{j\in\N,\,l\in L_j}$ is a Bessel sequence in $H^2_I$. Let $C$ denote its analysis operator. Then the above relation shows that $T = CU$. In particular, the Bessel bounds of both sequences coincide. Moreover, if $\calA$ is a frame for $\calH$, then $C = TU^*$ is onto, meaning that $(\phi_{jl})_{j\in\N,\,l\in L_j}$ is indeed a Riesz sequence and that its lower Riesz bound coincides with the lower frame bound of $(A^nf_i)_{n\in\N,\,i\in I}$.

Assume conversely that $\vec y_{jl}\in \ell^2(I)$ for each $j\in\N$ and $l\in L_j$ and that $(\phi_{jl})_{j\in\N,\,l\in L_j}$ is a Bessel sequence in $H_I^2$ with Bessel bound $\beta > 0$. If $x\in\ell^2(I\times\N)$ with only finitely many non-zero entries and $\psi = Ux$, then $\<Tx,e_{jl}\> = \<\psi,\phi_{jl}\>$, that is, $\|Tx\|^2 = \sum_{j=0}^\infty\sum_{l=1}^{n_j}|\<\psi,\phi_{jl}\>|^2\le\beta\|\psi\|^2 = \beta\|x\|^2$. Thus, $\calA$ is a Bessel sequence. If, in addition, $(\phi_{jl})_{j\in\N,\,l\in L_j}$ is a Riesz sequence, then $T = CU$ is onto, which means that $\calA$ is a frame for $\calH$.
\end{proof}

If the index set $I$ only contains one element, the system $\calA$ has the form $(A^nf)_{n\in\N}$, where $f\in\calH$. In the next lemma we formulate a characterization from \cite{p} for the case of a diagonal operator.

\begin{lem}\label{l:bessel}
Let $A = \sum_{j=0}^\infty\la_jP_j$ be a diagonal operator in normal form such that $\la_j\in\D$ for all $j\in\N$ and $f\in\calH$. Then the following statements are equivalent.
\begin{enumerate}
\item[{\rm (i)}]  The sequence $(A^nf)_{n\in\N}$ is a Bessel sequence in $\calH$.
\item[{\rm (ii)}] There exists a constant $C > 0$ such that
$$
\sum_{j=0}^\infty\left|\vphi(\la_j)\right|^2\|P_jf\|^2\,\le\,C\|\vphi\|_{H^2(\D)}^2\qquad\text{for all }\vphi\in H^2(\D).
$$
\end{enumerate}
\end{lem}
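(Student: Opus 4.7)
The plan is to deduce Lemma \ref{l:bessel} as the single-vector ($|I|=1$) specialization of Theorem \ref{t:riesz}(i), after unpacking the inner products $\langle \varphi, \phi_{jl}\rangle$ via the reproducing kernel.

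First, I would apply Theorem \ref{t:riesz}(i) with $|I|=1$. In this case, each $\vec y_{jl}$ is just the scalar $\varepsilon_j^{-1}\langle e_{jl}, f\rangle$, so the $\ell^2(I)$-membership requirement is automatic, and $\phi_{jl} = \varepsilon_j^{-1}\langle e_{jl},f\rangle K_{\lambda_j} \in H^2(\D)$. Thus Theorem \ref{t:riesz}(i) reduces the claim to the equivalence
\begin{center}
$(A^n f)_{n\in\N}$ is Bessel in $\calH$ $\;\Longleftrightarrow\;$ $\Phi = (\phi_{jl})_{j\in\N,\, l\in L_j}$ is Bessel in $H^2(\D)$,
\end{center}
with matching Bessel bounds.

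Next, I would translate the Bessel inequality for $\Phi$ into the form appearing in condition (ii). Using that $K_{\lambda_j}$ is the normalized reproducing kernel, so that $\langle \varphi, K_{\lambda_j}\rangle = \varepsilon_j\,\varphi(\lambda_j)$, one gets
$$
\langle \varphi, \phi_{jl}\rangle \,=\, \varepsilon_j^{-1}\,\overline{\langle e_{jl},f\rangle}\,\langle \varphi, K_{\lambda_j}\rangle \,=\, \overline{\langle e_{jl},f\rangle}\,\varphi(\lambda_j).
$$
Summing over $l\in L_j$ and using Parseval's identity for $P_jf$ in the orthonormal basis $(e_{jl})_{l\in L_j}$ of $P_j\calH$ gives
$$
\sum_{l\in L_j}|\langle \varphi,\phi_{jl}\rangle|^2 \,=\, |\varphi(\lambda_j)|^2\,\|P_jf\|^2.
$$
Therefore the Bessel inequality $\sum_{j,l}|\langle\varphi,\phi_{jl}\rangle|^2 \le C\|\varphi\|_{H^2(\D)}^2$ is identical to (ii), and the equivalence claimed in the lemma follows at once.

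There is no real obstacle: the only work is the (trivial) verification that the $\ell^2(I)$-condition of Theorem \ref{t:riesz} is vacuous when $|I|=1$, and the reproducing-kernel identification that collapses the double sum over $(j,l)$ to the single sum over $j$ weighted by $\|P_jf\|^2$. Both steps are routine once Theorem \ref{t:riesz} is in hand.
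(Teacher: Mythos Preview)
Your argument is correct. It differs from the paper's proof, though: the paper does not invoke Theorem~\ref{t:riesz} here but instead identifies the measure $\mu_f = \|E(\cdot)f\|^2$ as $\sum_j\delta_{\la_j}\|P_jf\|^2$ and then appeals to an external characterization from \cite[Theorem~4.3]{p}, which says that $(A^nf)_{n\in\N}$ is Bessel if and only if $H^2(\D)$ embeds continuously in $L^2(\mu_f)$. Your route has the advantage of being entirely self-contained within the paper---no outside reference is needed, and since Theorem~\ref{t:riesz} is already proved just above and does not rely on Lemma~\ref{l:bessel}, there is no circularity. The paper's route, on the other hand, connects the statement to the spectral-measure viewpoint and would apply verbatim to any normal $A$ with spectrum in $\D$, not just diagonal ones; but since the lemma is only stated for diagonal operators, that extra generality is not exploited here.
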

\begin{proof}
Since $A$ is a diagonal operator, we have that $\mu_f = \sum_{j=0}^\infty\delta_{\la_j}\|P_jf\|^2$ (see \eqref{e:mu_f}). Therefore, for every measurable function $\vphi : \C\to\C$ we have
$$
\int|\vphi|^2\,d\mu_f = \sum_{j=0}^\infty|\vphi(\la_j)|^2\|P_jf\|^2.
$$
Hence, (ii) exactly means that $H^2(\D)$ is continuously embedded in $L^2(\mu_f)$. By \cite[Theorem 4.3]{p}, the latter is equivalent to (i).
\end{proof}

In order to prove Theorem \ref{t:characterization} we need one more definition.

\begin{defn}\label{d:index}
Let $\Lambda = (\la_j)_{j\in\N}$ be a sequence in $\D$. If $\Lambda$ is not a finite union of separated sequences, we set $\ind(\Lambda) := \infty$. Otherwise, we define
\begin{equation*}
\ind(\Lambda) := \min\left\{n\in\N : \N = \bigcup_{k=1}^n J_k,\;(\la_j)_{j\in J_k}\text{ is separated for each $k=1,\ldots,n$}\right\}.
\end{equation*}
The value $\ind(\Lambda)$ will be called the {\em index} of the sequence $\Lambda$.
\end{defn}

\begin{proof}[Proof of Theorem \rmref{t:characterization}]
Suppose that $\calA$ is a frame for $\calH$. By \cite[Corollary 1]{ap}, $A$ is a diagonal operator with multiplicity $\mult(A)\le|I|$ having all its eigenvalues in $\D$. Let $A = \sum_{j=0}^\infty\la_jP_j$ be its normal form as in (i) and let $\alpha,\beta > 0$ be the frame bounds of $\calA$. That is,
\begin{equation}\label{e:frame}
\alpha\|h\|^2\,\le\,\sum_{i\in I}\sum_{n=0}^\infty|\<h,A^nf_i\>|^2\,\le\,\beta\|h\|^2,\qquad h\in\calH.
\end{equation}
Fix $j\in\N$. If $h\in P_j\calH$, then $A^*h = \ol{\la_j}h$ and hence we have $|\<h,A^nf_i\>| = |\<(A^*)^nh,f_i\>| = |\la_j|^n|\<h,f_i\>|$. Therefore,
$$
\sum_{n=0}^\infty\sum_{i\in I}|\<h,A^nf_i\>|^2 = \sum_{n=0}^\infty\sum_{i\in I}|\la_j|^{2n}|\<h,f_i\>|^2 = \frac{\sum_{i\in I}|\<h,f_i\>|^2}{1-|\la_j|^2}.
$$
Together with \eqref{e:frame}, this proves (iii). From (iii) we moreover conclude that
\begin{equation}
\sum_{i\in I}\|P_jf_i\|^2\,\ge\,\alpha\veps_j^2(\dim P_j\calH)\,\ge\,\alpha\veps_j^2
\end{equation}
for each $j\in\N$, where (cf.\ \eqref{e:veps}) $\veps_j := \sqrt{1-|\la_j|^2}$, $j\in\N$. For this, simply choose an orthonormal basis of $P_j\calH$ and plug its vectors into $h$ in \eqref{e:fs}. Thus, for $\vphi\in H^2(\D)$ we obtain for the evaluation operator $T_\Lambda$ from \eqref{e:eval1}--\eqref{e:eval2} that
\begin{equation}
\|T_\Lambda\vphi\|_2^2 = \sum_{j=0}^\infty\veps_j^2|\vphi(\la_j)|^2\,\le\,\alpha^{-1}\sum_{i\in I}\sum_{j=0}^\infty|\vphi(\la_j)|^2\|P_jf_i\|^2.
\end{equation}
By Lemma \ref{l:bessel}, the latter expression is bounded from above by $C\|\vphi\|_{H^2(\D)}^2$, where $C$ is some positive constant. Thus, the operator $T_\Lambda$ is everywhere defined and bounded. Due to Theorem \ref{t:finite_union} this means that the sequence $\Lambda$ is a finite union of uniformly separated sequences. To prove (iv), we start by noticing that $T_{\Lambda,I}$ is bounded as $T_\Lambda$ is. Let $h\in\calH$ be arbitrary. Then there exists $(c_{in})_{i\in I,\,n\in\N}\in\ell^2(I\times\N)$ such that $h = \sum_{i\in I}\sum_{n=0}^\infty c_{in}A^nf_i$. Define $\vphi_i\in H^2(\D)$ by $\vphi_i(z) = \sum_{n=0}^\infty c_{in}z^n$ for $i\in I$ and put $\phi := (\vphi_i)_{i\in I}\in H_I^2$. Then
$$
h = \sum_{i\in I}\sum_{j=0}^\infty P_j\sum_{n=0}^\infty c_{in}A^nf_i = \sum_{i\in I}\sum_{j=0}^\infty\sum_{n=0}^\infty c_{in}\la_j^nP_jf_i = \sum_{i\in I}\sum_{j=0}^\infty\veps_j\vphi_i(\la_j)(\veps_j^{-1}P_jf_i),
$$
and thus $h = C_E^*(T_\Lambda\vphi_i)_{i\in I} = C_E^*T_{\Lambda,I}\phi$, where $T_{\Lambda,I}$ is the operator in \eqref{e:TLaI}. Hence, we have $C_E^*\ran T_{\Lambda,I} = \calH$, which implies (iv), see Remark \ref{r:jojo}(c).

It remains to complete the proof of (ii), i.e., showing that $\Lambda$ is a union of $m:=|I|$ (or less) uniformly separated sequences. Taking in to account Theorem \ref{t:durenschuster}, it is sufficient to separate $\Lambda$ into $m$ separated sequences, that is, to show that $\ind(\Lambda)\le m$. For this, we fix some positive number $r < \sqrt{\alpha(8\beta|I|)^{-1}}$ and prove that every pseudo-hyperbolic ball $\BB_r(z)$, $z\in\D$, contains at most $m$ elements of the sequence $\Lambda$. Then the claim follows from Lemma \ref{l:index}. Towards a contradiction, suppose that some ball $\mathbb B_r(z_0)$ contains $m+1$ elements of $\Lambda$. Without loss of generality, we may assume that these elements are $\la_1,\ldots,\la_{m+1}$. Since $\varrho$ is a metric on $\D$, we have $\varrho(\la_j,\la_k) < 2r$ for all $j,k=1,\ldots,m+1$. 

Using the notation of Theorem \ref{t:riesz}, let $\vec y_j := \vec y_{j,1} = \veps_j^{-1}(\<e_{j,1},f_i\>)_{i\in I}\in\C^m$, $j\in\N$. Since $\vec y_1,\ldots,\vec y_{m+1}$ are $m+1$ vectors in $\C^m$, there exists some $c\in\C^{m+1}$ such that $\|c\|_2 = 1$ and $\sum_{j=1}^{m+1}c_j\vec y_j = 0$. By Theorem \ref{t:riesz}, $(\vec y_jK_{\la_j})_{j\in\N}$ is a Riesz sequence with Riesz bounds $\alpha$ and $\beta$. In particular, we have $\|\vec y_j\|_2^2\le\beta$ for all $j\in\N$. Using this, Cauchy-Schwarz, and Lemma \ref{l:lipschitz}, we obtain
\begin{align*}
\alpha
&\le\Bigg\|\sum_{j=1}^{m+1}c_j\vec y_jK_{\la_j}\Bigg\|_{H_I^2}^2
= \Bigg\|\sum_{j=1}^{m}c_j\vec y_j\left(K_{\la_j} - K_{\la_{m+1}}\right)\Bigg\|_{H_I^2}^2\\
&\le \sum_{j=1}^{m}\left\|\vec y_j(K_{\la_j} - K_{\la_{m+1})}\right\|_{H_I^2}^2\le 2\beta\sum_{j=1}^{m}\varrho(\la_j,\la_{m+1})^2\,\le\,8\beta mr^2 < \alpha,
\end{align*}
which is the desired contradiction. Here, we used that $\|\vec y\vphi\|_{H^2_I} = \|\vec y\|_2\|\vphi\|_{H^2(\D)}$ for $\vec y\in\C^m$ and $\vphi\in H^2(\D)$, cf.\ \eqref{e:prodnorm}.

Conversely, let the conditions (i)--(iv) be satisfied. Let us first prove that for each $i\in I$ the system $(A^nf_i)_{n\in\N}$ is a Bessel sequence. For this, fix $i\in I$ and deduce from (iii) that $\|P_jf_i\|^2\le\beta\veps_j^2$ holds for each $j\in\N$. Note that the evaluation operator $T_\Lambda$ from \eqref{e:eval1}--\eqref{e:eval2} is everywhere defined and bounded by (ii) and Theorem \ref{t:finite_union}. Thus, for every $\vphi\in H^2(\D)$ we have
\begin{equation*}
\sum_{j=0}^\infty|\vphi(\la_j)|^2\|P_jf_i\|^2\,\le\,\beta\sum_{j=0}^\infty|\veps_j\vphi(\la_j)|^2 = \beta\|T_\Lambda\vphi\|_2^2\,\le\,\beta\|T_\Lambda\|^2\|\vphi\|_{H^2(\D)}^2.
\end{equation*}
Hence, the condition (ii) in Lemma \ref{l:bessel} is satisfied so that $(A^nf_i)_{n\in\N}$ indeed is a Bessel sequence. Let $h\in\calH$ be arbitrary. By (iii) and (iv) (see also Remark \ref{r:jojo}(c)), there exists $\phi\in H_I^2$, $\phi = (\vphi_i)_{i\in I}$, such that $C_E^*T_{\Lambda,I}\phi = h$. For $i\in I$, let $(c_{in})_{n\in\N}\in\ell^2(\N)$ such that $\vphi_i(z) = \sum_{n=0}^\infty c_{in}z^n$ for $z\in\D$. Then
$$
\sum_{i\in I}\sum_{n=0}^\infty c_{in}A^nf_i = \sum_{i\in I}\sum_{j=0}^\infty\sum_{n=0}^\infty c_{in}\la_j^nP_jf_i = \sum_{i\in I}\sum_{j=0}^\infty\veps_j\vphi_i(\la_j)(\veps_j^{-1}P_jf_i) = h.
$$
Hence, the synthesis operator of $\calA = (A^nf_i)_{n\in\N,\,i\in I}$ is onto, which means that $\calA$ is a frame for $\calH$.
\end{proof}

From Theorem \ref{t:characterization} we deduce the following two corollaries.

\begin{cor}\label{c:charac_us}
Let $A = \sum_{j=0}^\infty\la_jP_j$ be a diagonal operator in normal form such that $(\la_j)_{j\in\N}\subset\D$ is uniformly separated and let $(f_i)_{i\in I}$ be a finite sequence of vectors in $\calH$. Then $\calA$ is a frame for $\calH$ if and only if there exist $\alpha,\beta > 0$ such that
\begin{equation*}
\alpha(1-|\la_j|^2)\|h\|^2\,\le\,\sum_{i\in I}|\<h,f_i\>|^2\,\le\,\beta(1-|\la_j|^2)\|h\|^2,\quad j\in\N,\,h\in P_j\calH.
\end{equation*}
\end{cor}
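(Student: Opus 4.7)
The plan is to derive Corollary~\ref{c:charac_us} as a direct specialization of Theorem~\ref{t:characterization} by showing that, under the uniform-separation hypothesis on $\Lambda=(\la_j)_{j\in\N}$, conditions (ii) and (iv) of that theorem hold automatically and condition (iii) reduces to the displayed two-sided inequality, so that the full characterization collapses to a single frame-type bound.

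First I would observe that for $h\in P_j\calH$ one has $\<h,P_jf_i\>=\<P_jh,f_i\>=\<h,f_i\>$, so the displayed inequality in the corollary is literally condition (iii) of Theorem~\ref{t:characterization}. Next, because $\Lambda$ is itself uniformly separated, it is \emph{a fortiori} a union of $|I|$ uniformly separated sequences, verifying (ii). Moreover, by Theorem~\ref{t:interpolating}, uniform separation gives $\ran T_\Lambda=\ell^2(\N)$, hence $(\ran T_\Lambda)^{|I|}=\ell^2(I\times\N)$, and (iv) holds trivially -- this is precisely the observation already recorded in Remark~\ref{r:jojo}(e)--(f).

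What then remains is condition (i) of Theorem~\ref{t:characterization}: the diagonal structure in normal form and $(\la_j)\subset\D$ are assumed in the corollary, so the only missing ingredient is the multiplicity bound $\mult(A)\le|I|$. In the forward implication this is handed to us directly by Theorem~\ref{t:characterization}(i). In the reverse implication it is forced by the displayed inequality: its lower part says that $(P_jf_i)_{i\in I}$ spans $P_j\calH$ with positive lower frame bound $\alpha(1-|\la_j|^2)$, and since this finite system has only $|I|$ elements, one obtains $\dim P_j\calH\le|I|$ for every $j$, whence $\mult(A)\le|I|$. I do not anticipate any genuine obstacle here; the content of the corollary is essentially the observation that uniform separation of $\Lambda$ is precisely what trivializes conditions (ii) and (iv) of Theorem~\ref{t:characterization}, so that the remaining characterization is exactly the stated two-sided inequality.
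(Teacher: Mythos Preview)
Your proposal is correct and follows the same approach as the paper's proof: both observe that uniform separation of $\Lambda$ trivializes conditions (ii) and (iv) of Theorem~\ref{t:characterization} (via Theorem~\ref{t:interpolating}), reducing the characterization to condition (iii). You are in fact slightly more careful than the paper, which does not explicitly address the multiplicity bound in condition (i); your observation that the lower inequality forces $\dim P_j\calH\le|I|$ fills exactly the gap the paper leaves implicit.
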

\begin{proof}
Since $(\la_j)_{j\in\N}$ is assumed to be uniformly separated, the conditions (ii) and (iv) from Theorem \ref{t:characterization} are trivially satisfied (see Theorem \ref{t:interpolating}). Thus, $\calA$ is a frame for $\calH$ if and only if condition (iii) from Theorem \ref{t:characterization} holds.
\end{proof}

\begin{cor}\label{c:nownew}
Assume that $A = \sum_{j=0}^\infty\la_jP_j$ is a diagonal operator in normal form and that $\dim P_j\calH = |I| < \infty$ for all but a finite number of $j\in\N$. Then $\calA$ is a frame for $\calH$ if and only if the following two statements hold:
\begin{enumerate}
\item[{\rm (a)}] $(\la_j)_{j\in\N}$ is a uniformly separated sequence in $\D$.
\item[{\rm (b)}] There exist $\alpha,\beta > 0$ such that for all $j\in\N$ and all $h\in P_j\calH$ we have
$$
\alpha(1-|\la_j|^2)\|h\|^2\,\le\,\sum_{i\in I}|\<h,f_i\>|^2\,\le\,\beta(1-|\la_j|^2)\|h\|^2.
$$
\end{enumerate}
\end{cor}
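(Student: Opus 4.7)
My plan is to reduce the statement to Theorem \ref{t:characterization}. For the \emph{if} direction, assume (a) and (b). The dimension hypothesis forces $\mult(A)\le|I|$, which together with the assumed form of $A$ yields condition (i). Since $\<h,P_jf_i\> = \<h,f_i\>$ for $h\in P_j\calH$, (b) is verbatim condition (iii). Uniform separation of $\Lambda$ makes (ii) trivial and, by Remark \ref{r:jojo}(e), also makes (iv) trivial, so Theorem \ref{t:characterization} applies and $\calA$ is a frame.

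For the \emph{only if} direction, suppose $\calA$ is a frame, so that Theorem \ref{t:characterization} furnishes (i)--(iv). Condition (iii) is (b), and only (a) remains to be proved. Let $J_0:=\{j:\dim P_j\calH<|I|\}$ (finite by hypothesis) and $\N':=\N\setminus J_0$. For $j\in\N'$, condition (iii) states that $(\veps_j^{-1}P_jf_i)_{i\in I}$ is a frame of $P_j\calH$ with uniform bounds $\alpha,\beta$; being $|I|$ vectors in an $|I|$-dimensional space, it is in fact a Riesz basis. Hence $E=(\veps_j^{-1}P_jf_i)_{j,i}$ decomposes as a Riesz basis of $\bigoplus_{j\in\N'}P_j\calH$ together with a finite frame in the finite-dimensional space $\bigoplus_{j\in J_0}P_j\calH$, so $\ker C_E^*$ is finite-dimensional and supported entirely on the indices in $I\times J_0$. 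Given $d\in\ell^2(I\times\N')$, extend it by zero on $I\times J_0$ to $c\in\ell^2(I\times\N)$ and apply (iv) to decompose $c=T_{\Lambda,I}\phi+c''$ with $c''\in\ker C_E^*$. On $I\times\N'$ the component $c''$ vanishes, so $d=T_{\Lambda',I}(\phi|_{\N'})$, where $\Lambda':=(\la_j)_{j\in\N'}$. Therefore $T_{\Lambda'}$ is surjective onto $\ell^2(\N')$, so $\Lambda'$ is an interpolating sequence and hence uniformly separated by Theorem \ref{t:interpolating}.

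The remaining step, which I expect to be the main obstacle, is to promote uniform separation of $\Lambda'$ to uniform separation of the whole sequence $\Lambda=\Lambda'\cup(\la_j)_{j\in J_0}$. I will obtain a uniform lower bound on $\varrho(\la_j,\la_k)$ for $j\in J_0$, $k\in\N'$ using the Riesz-sequence reformulation. By Theorem \ref{t:riesz}, the system $\Phi=(\vec y_{jl}K_{\la_j})_{j,l}$ is a Riesz sequence in $H_I^2$ with the same bounds $\alpha,\beta$. For $k\in\N'$ the vectors $(\vec y_{kl})_{l=1}^{|I|}$ form a Riesz basis of $\C^{|I|}$, so $\vec y_{j1}$ expands as $\vec y_{j1}=\sum_l c_l\vec y_{kl}$, and then
$$
\phi_{j1}-\sum_l c_l\phi_{kl} \;=\; \vec y_{j1}\bigl(K_{\la_j}-K_{\la_k}\bigr)
$$
is a non-trivial combination of $\Phi$-vectors. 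The lower Riesz bound gives norm at least $\sqrt{\alpha}$, while $\|\vec y_{j1}\|^2\le\beta$ together with Lemma \ref{l:lipschitz} gives norm at most $\sqrt{2\beta}\,\varrho(\la_j,\la_k)$, whence $\varrho(\la_j,\la_k)\ge\sqrt{\alpha/(2\beta)}$ uniformly over $j\in J_0$, $k\in\N'$. Combining this with the uniform separation of $\Lambda'$, the finiteness of $J_0$, and the positivity of the Blaschke product $|B_{\Lambda'}(\la_j)|$ at each exceptional $\la_j\notin\Lambda'$, the infimum $\inf_n\prod_{k\ne n}\varrho(\la_k,\la_n)$ is positive over all $n\in\N$, establishing (a).
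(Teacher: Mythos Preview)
Your argument is correct, but the \emph{only if} direction is more laborious than the paper's. The paper proceeds by first treating the special case $\dim P_j\calH=|I|$ for \emph{all} $j$: then $E$ is a Riesz basis of $\calH$, so $\ker C_E^*=\{0\}$, and (iv) reduces to $\ran T_\Lambda=\ell^2(\N)$, i.e.\ uniform separation. The general case is handled by restricting the frame $\calA$ to the $A$-reducing subspace $P\calH=\bigoplus_{j\in\N'}P_j\calH$: the system $((A|_{P\calH})^nPf_i)_{n,i}$ is again a frame for $P\calH$, now with full multiplicity at every $j$, so the special case yields uniform separation of $\Lambda'=(\la_j)_{j\in\N'}$. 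You obtain the same conclusion by a different and rather nice device---reading off surjectivity of $T_{\Lambda'}$ directly from the original condition (iv) via the support property of $\ker C_E^*$---thereby avoiding the restriction-to-a-subspace trick. (One notational quibble: $\phi|_{\N'}$ is not meaningful; the restriction of $T_{\Lambda,I}\phi$ to $I\times\N'$ is simply $T_{\Lambda',I}\phi$, with the same $\phi\in H_I^2$.)

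Where you overwork, however, is in promoting uniform separation from $\Lambda'$ to $\Lambda$. Your Riesz-sequence estimate via Theorem~\ref{t:riesz} and Lemma~\ref{l:lipschitz}, followed by the Blaschke-product argument, is valid but unnecessary: since $A$ is in normal form the $\la_j$ are distinct, $J_0$ is finite, and Corollary~\ref{c:finunifsep} (itself an immediate consequence of Theorem~\ref{t:durenschuster}) gives (a) in one line. This is exactly how the paper finishes.
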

\begin{proof}
If (a) and (b) hold, then $\calA$ is a frame for $\calH$ by Corollary \ref{c:charac_us}. Conversely, let $\calA$ be a frame for $\calH$. Then (b) follows from Theorem \ref{t:characterization}. For the proof of (a) let us first assume that $\dim P_j\calH = |I|$ for {\it all} $j\in\N$. Since $(\veps_j^{-1}P_jf_i)_{i\in I}$ is a frame for $P_j\calH$ for every $j\in\N$ with frame bounds $\alpha$ and $\beta$ (see Remark \ref{r:jojo}), we conclude that it is even a Riesz basis of $P_j\calH$ with Riesz bounds $\alpha$ and $\beta$. Hence, $E = (\veps_j^{-1}P_jf_i)_{j\in\N,\,i\in I}$ is a Riesz basis of $\calH$. Thus, the synthesis operator $C_E^*$ is one-to-one, i.e., $\ker C_E^* = \{0\}$. Therefore, it is a consequence of Theorem \ref{t:characterization} that $T_\Lambda : H^2(\D)\to\ell^2(\N)$ is onto which is equivalent to $(\la_j)_{j\in\N}$ being uniformly separated (see Theorem \ref{t:interpolating}).

For the general case, let $J := \{j\in\N : \dim P_j\calH = |I|\}$ and let $P := \sum_{j\in J}P_j$. Then $A_0 := A|P\calH = \sum_{j\in J}\la_j(P_j|P\calH)$ and $(A_0^nPf_i)_{n\in\N,\,i\in I}$ is a frame for $P\calH$. Thus, $(\la_j)_{j\in J}$ is uniformly separated by the first part of the proof. The claim now follows from Corollary \ref{c:finunifsep}.
\end{proof}

\begin{rem}\label{r:counterex}
For every given finite index set $I$ there exist a diagonal operator $A$ and vectors $f_i$, $i\in I$, such that $(A^nf_i)_{n\in\N,\,i\in I}$ is a frame for $\calH$ and $\ind(\Lambda) = |I|$ (where $\Lambda$ is the sequence of the distinct eigenvalues of $A$). As an example, let $m:=|I|$ and let $\Lambda = (\la_j)_{j\in\N}$ be a union of $m$ uniformly separated subsequences $\Lambda_i$, $i=1,\ldots,m$, such that $\la_j\neq\la_k$ for $j\neq k$ and $\ind(\Lambda) = m$. Note that each $\Lambda_i$ is necessarily infinite by Corollary \ref{c:finunifsep}. For $i=1,\ldots,m$ let $\Lambda_i = (\la_{j,i})_{j\in\N}$ and put $A_i := \sum_{j=0}^\infty\la_{j,i}\<\cdot,e_j\>e_j$, where $(e_j)_{j\in\N}$ is an orthonormal basis of a Hilbert space $H$. Choose vectors $g_1,\ldots,g_m\in H$ such that $(A_i^ng_i)_{n\in\N}$ is a frame for $H$, $i=1,\ldots,m$. This is possible by Theorem \ref{t:one}. Now, put $\calH := \bigoplus_{i=1}^m H$ and $A := \bigoplus_{i=1}^m A_i$, and let $f_i$ be the vector in $\calH$ with $(f_i)_i = g_i$ and $(f_i)_k = 0$ for $k\in\{1,\ldots,m\}\setminus\{i\}$. Then $(A^nf_i)_{n\in\N,\,i\in I}$ is a frame for $\calH$, since for $h = h_1\oplus\ldots\oplus h_m\in\calH$ we have
\begin{align*}
\sum_{i=1}^m\sum_{n=0}^\infty\left|\left\<h,A^nf_i\right\>\right|^2 = \sum_{i=1}^m\sum_{n=0}^\infty\left|\left\<h_i,A_i^ng_i\right\>\right|^2,
\end{align*}
from where it is easily seen that the claim is true.
\end{rem}

\vspace{.2cm}
\section{Dynamical Sampling with general bounded operators}\label{s:general}
In this section, we drop the requirement that $A\in L(\calH)$ be normal. Similarly as before, we fix $A\in L(\calH)$, an at most countable index set $I$, and vectors $f_i\in\calH$, $i\in I$, and define
\begin{equation}\label{e:calA2}
\calA := \calA(A,(f_i)_{i\in I}) := (A^nf_i)_{n\in\N,\,i\in I}.
\end{equation}
The {\em spectrum} of an operator $T\in L(\calH)$ (i.e., the set of all $\la\in\C$ for which $T - \la := T - \la\Id$ is not boundedly invertible) is denoted by $\sigma(T)$, the set of all eigenvalues of $T$ (usually called the {\em point spectrum} of $T$) by $\sigma_p(T)$. The continuous spectrum of $T$ is the set of all $\la\in\sigma(T)\setminus\sigma_p(T)$ for which $\ran(T - \la)$ is dense in $\calH$. It is denoted by $\sigma_c(T)$. The spectral radius of $T$ will be denoted by $r(T)$, i.e., $r(T) := \sup\{|\la| : \la\in\sigma(T)\}$.

Recall that an operator $T\in L(\calH)$ is said to be {\em strongly stable} if $T^nf\to 0$ as $n\to\infty$ holds for each $f\in\calH$. In this case, it follows from the Banach-Steinhaus theorem that $T$ is {\em power-bounded}, i.e., $\sup_{n\in\N}\|T^n\| < \infty$. Consequently, we infer from Gelfand's formula for the spectral radius that $r(T) = \lim_{n\to\infty}\|T^n\|^{1/n}\,\le\,1$. Hence, the spectrum of a strongly stable operator $T$ is contained in the closed unit disk. It is, moreover, quite easily shown that neither $T$ nor $T^*$ can have eigenvalues on the unit circle $\T$. The first statement of the following lemma was proved in \cite{ap}.

\begin{lem}\label{l:armenak}
If $\calA$ is a frame for $\calH$, then $A^*$ is strongly stable. In particular, we have
$$
\sigma(A)\subset\ol\D,\qquad \sigma(A)\cap\T\,\subset\,\sigma_c(A)
\qquad\text{and}\qquad
\sigma(A^*)\cap\T\,\subset\,\sigma_c(A^*).
$$
\end{lem}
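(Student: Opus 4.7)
The first assertion, that $A^*$ is strongly stable, is precisely what \cite{ap} establishes, so the plan is to take this as given and deduce the three spectral statements from it.

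First I would record the standard consequences of strong stability that are already laid out in the paragraph preceding the lemma: the Banach--Steinhaus theorem gives $\sup_n\|(A^*)^n\| < \infty$, hence $A^*$ and therefore also $A$ are power-bounded, and Gelfand's formula yields $r(A) = r(A^*) \le 1$, which is exactly $\sigma(A) \subset \ol\D$ and, by the spectral mapping identity $\sigma(A^*)=\{\ol\la:\la\in\sigma(A)\}$, also $\sigma(A^*)\subset\ol\D$.

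Next I would show that neither $A$ nor $A^*$ has eigenvalues on $\T$. For $A^*$ this is immediate: if $A^*g = \mu g$ with $|\mu|=1$ and $g\neq 0$, then $(A^*)^n g = \mu^n g$, so $\|(A^*)^n g\| = \|g\|$ for all $n$, contradicting $(A^*)^n g\to 0$. For $A$ the argument is the dual one: if $Af = \la f$ with $|\la|=1$ and $f\neq 0$, then for every $g\in\calH$,
$$
\la^n\<f,g\> \,=\, \<A^nf,g\> \,=\, \<f,(A^*)^ng\> \,\longrightarrow\, 0
$$
as $n\to\infty$; choosing $g = f$ gives $\la^n\|f\|^2\to 0$, which contradicts $|\la|=1$. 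Thus $\sigma_p(A)\cap\T = \sigma_p(A^*)\cap\T = \emptyset$.

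Finally I would extract the density of the ranges from the standard orthogonality relations $\ran(T-\la)^\perp = \ker(T^*-\ol\la)$. Given $\la\in\sigma(A)\cap\T$, the fact that $A^*$ has no eigenvalue $\ol\la\in\T$ (since $|\ol\la|=1$) yields $\ker(A^*-\ol\la)=\{0\}$, so $\ran(A-\la)$ is dense; together with $\la\notin\sigma_p(A)$ this shows $\la\in\sigma_c(A)$. Analogously, for $\mu\in\sigma(A^*)\cap\T$, the absence of $\ol\mu\in\T$ from $\sigma_p(A)$ gives $\ker(A-\ol\mu) = \ran(A^*-\mu)^\perp = \{0\}$, so $\ran(A^*-\mu)$ is dense and $\mu\in\sigma_c(A^*)$.

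There is no real obstacle in this lemma beyond quoting \cite{ap}; the whole content lies in organizing the two symmetric eigenvalue arguments and invoking $\ran(T-\la)^{\perp}=\ker(T^*-\ol\la)$. The only subtlety to keep an eye on is that strong stability of $A^*$ does \emph{not} entail strong stability of $A$, so one cannot simply mirror the $A^*$ argument to rule out eigenvalues of $A$ on $\T$; the pairing with an arbitrary $g$ (and then the specialization $g=f$) is the right workaround.
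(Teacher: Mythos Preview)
Your proposal is correct and matches the paper's approach: the paper does not give a formal proof of the lemma but absorbs it into the paragraph immediately preceding the statement, citing \cite{ap} for the strong stability of $A^*$, invoking Banach--Steinhaus and Gelfand's formula for $\sigma(A)\subset\ol\D$, and noting that ``it is, moreover, quite easily shown that neither $T$ nor $T^*$ can have eigenvalues on the unit circle $\T$.'' You have simply spelled out this last step and the passage to $\sigma_c$ via $\ran(T-\la)^\perp=\ker(T^*-\ol\la)$, which the paper leaves implicit.
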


The next theorem completes the necessary condition from Lemma \ref{l:armenak} to a characterizing set of conditions.

\begin{thm}\label{t:charac}
The system $\calA = (A^nf_i)_{n\in\N,\,i\in I}$ is a frame for $\calH$ if and only if the following conditions are satisfied.
\begin{enumerate}
\item[{\rm (i)}]   $A^*$ is strongly stable.
\item[{\rm (ii)}]  $(f_i)_{i\in I}$ is a Bessel sequence.
\item[{\rm (iii)}] There exists a boundedly invertible operator $S\in L(\calH)$ such that
\begin{equation}\label{e:identity}
ASA^* = S - F,
\end{equation}
where $F$ is the frame operator of $(f_i)_{i\in I}$.
\end{enumerate}
If the conditions {\rm (i)--(iii)} are satisfied, then the operator $S$ in {\rm (iii)} is necessarily the frame operator of $\calA$.
\end{thm}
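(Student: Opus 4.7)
The plan is to treat the two directions separately, with necessity amounting to a direct computation and sufficiency driven by iterating the dilation identity (iii) against strong stability of $A^*$.

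For necessity, suppose $\calA$ is a frame with frame operator $S$. Condition (i) is Lemma \ref{l:armenak}. Condition (ii) is automatic, since $(f_i)_{i\in I}$ is the subsystem of $\calA$ indexed by $n=0$, and subsystems of Bessel sequences are Bessel. For (iii) I would compute, for $h\in\calH$,
\begin{align*}
\langle Sh, h\rangle
&= \sum_{i\in I}\sum_{n=0}^\infty |\langle h, A^n f_i\rangle|^2
 = \sum_{n=0}^\infty \sum_{i\in I} |\langle (A^*)^n h, f_i\rangle|^2 \\
&= \sum_{n=0}^\infty \langle F(A^*)^n h, (A^*)^n h\rangle,
\end{align*}
so $S = \sum_{n\ge 0} A^n F (A^*)^n$ in the weak operator topology, and hence $ASA^* = S - F$. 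The operator $S$ is boundedly invertible because it is the frame operator of a frame.

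For sufficiency, assume (i)--(iii). The first step is to show that $S$ is self-adjoint. Setting $T := S - S^*$, taking adjoints in (iii) and subtracting yields $T = ATA^*$, and iterating gives $T = A^n T (A^*)^n$ for every $n\in\N$. Then, for any $h,g\in\calH$,
\[
\langle Th, g\rangle = \langle T(A^*)^n h, (A^*)^n g\rangle,
\]
which tends to zero as $n\to\infty$ because $T$ is bounded and $A^*$ is strongly stable. Hence $T=0$.

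The second step is to iterate $S = ASA^* + F$ to the formula $S = A^n S (A^*)^n + \sum_{k=0}^{n-1} A^k F (A^*)^k$, which gives
\[
\langle Sh, h\rangle = \langle S(A^*)^n h, (A^*)^n h\rangle + \sum_{k=0}^{n-1}\sum_{i\in I} |\langle h, A^k f_i\rangle|^2.
\]
Strong stability of $A^*$ combined with Banach--Steinhaus makes $A^*$, and hence $A$, power-bounded, so the first summand is bounded by $M\|S\|\|(A^*)^n h\|^2 \to 0$. Letting $n\to\infty$ yields $\langle Sh, h\rangle = \sum_{n,i}|\langle h, A^n f_i\rangle|^2$. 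This identity simultaneously shows that $\calA$ is a Bessel sequence with upper bound $\|S\|$ and, by polarization (both $S$ and the frame operator of $\calA$ being self-adjoint), that the frame operator of $\calA$ equals $S$. Since $S$ is boundedly invertible by hypothesis, $\calA$ is a frame, and the final uniqueness claim follows at once.

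The main obstacle I expect is the self-adjointness of $S$: without it one cannot pass from the quadratic-form identity to an operator identity between $S$ and the frame operator of $\calA$. The conceptual point is that $S-S^*$ obeys the homogeneous version of the same dilation equation, and strong stability of $A^*$ then forces it to vanish --- a device that recurs in the sufficiency argument itself.
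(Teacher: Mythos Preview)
Your proof is correct and follows essentially the same route as the paper: iterate the identity $S = ASA^* + F$ to $A^nS(A^*)^n = S - \sum_{k<n}A^kF(A^*)^k$ and let strong stability of $A^*$ kill the remainder, obtaining $\langle Sh,h\rangle = \sum_{n,i}|\langle h,A^nf_i\rangle|^2$. Your separate step showing $S=S^*$ via the homogeneous equation $T=ATA^*$ is correct but unnecessary in the complex Hilbert space setting of the paper, where $\langle (S-S_0)h,h\rangle = 0$ for all $h$ already forces $S=S_0$ by polarization without any a priori self-adjointness; the paper simply reads off self-adjointness and non-negativity of $S$ from the quadratic-form identity itself.
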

\begin{proof}
Assume that $\calA$ is a frame for $\calH$ and let $S$ be its frame operator. As a subsequence of the frame $\calA$, $(f_i)_{i\in I}$ is a Bessel sequence. Furthermore, for $f\in\calH$ we have
\begin{align*}
ASA^*f
&= \sum_{i\in I}\sum_{n=0}^\infty\<A^*f,A^nf_i\>A^{n+1}f_i = \sum_{i\in I}\sum_{n=1}^\infty\<f,A^nf_i\>A^nf_i\\
&= \sum_{i\in I}\sum_{n=0}^\infty\<f,A^nf_i\>A^nf_i - \sum_{i\in I}\<f,f_i\>f_i = Sf - Ff,
\end{align*}
which proves \eqref{e:identity}. For the converse statement, assume that (i)--(iii) are satisfied. From (iii) (and (ii)) we conclude that
$$
A^2S(A^*)^2 = A(ASA^*)A^* = A(S-F)A^* = ASA^* - AFA^* = S - (F + AFA^*).
$$
For $n\in\N$, $n\ge 1$, we obtain by induction
$$
A^nS(A^*)^n = S - \sum_{k=0}^{n-1}A^kF(A^*)^k = S - \sum_{k=0}^{n-1}\sum_{i\in I}\left\<\cdot\,,A^kf_i\right\>A^kf_i.
$$
Hence, for $f\in\calH$ we have (using (i))
$$
\sum_{k=0}^{\infty}\sum_{i\in I}\left|\left\<f,A^kf_i\right\>\right|^2 = \<Sf,f\> - \lim_{n\to\infty}\left\<S(A^*)^nf,(A^*)^nf\right\> = \<Sf,f\>.
$$
Therefore, $S$ is selfadjoint and non-negative. The claim now follows from the fact that $S$ is boundedly invertible. Moreover, if $S_0$ denotes the frame operator of $\calA$, then $\<(S - S_0)f,f\> = 0$ for $f\in\calH$, which implies $S = S_0$.
\end{proof}

The next theorem describes the ``admissible'' operators $A\in L(\calH)$ for which there exists a (finite or infinite) sequence $(f_i)_{i\in I}$ such that $\calA$ becomes a frame for $\calH$. It shows that these are similar to certain contractions.

\begin{thm}\label{t:admissible}
For $A\in L(\calH)$ the following statements hold.
\begin{enumerate}
\item[{\rm (i)}]  There exists a Bessel family $\{f_i : i\in I\}\subset\calH$ such that $\calA$ in \eqref{e:calA2} is a frame for $\calH$ if and only if $A^*$ is similar to a strongly stable contraction.
\item[{\rm (ii)}] There exists a finite set $\{f_i : i\in I\}\subset\calH$ such that $\calA$ in \eqref{e:calA2} is a frame for $\calH$ if and only if $A^*$ is similar to a strongly stable contraction $T\in L(\calH)$ such that $\ran(\Id - T^*T)$ is finite-dimensional.
\end{enumerate}
If the conditions in {\rm (i)} or {\rm (ii)} are satisfied, then a contraction as in {\rm (i)} of {\rm (ii)} is given by
$$
T = S^{1/2}A^*S^{-1/2},
$$
where $S$ is the frame operator of $\calA$.
\end{thm}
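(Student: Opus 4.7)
The strategy is to use Theorem \ref{t:charac} to translate between the frame condition on $\calA$ and the algebraic identity $ASA^* = S-F$, and then to recognize that identity as essentially a similarity with a contraction via the square root of $S$. The key identification will be $T = S^{1/2}A^*S^{-1/2}$, which makes both directions quite symmetric, and which also pinpoints the contraction in the final claim.

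For the forward implication in (i), suppose $\calA$ is a frame. Theorem \ref{t:charac} supplies a positive boundedly invertible frame operator $S$, a Bessel family $(f_i)_{i\in I}$ with frame operator $F\ge 0$, the identity $ASA^* = S-F$, and strong stability of $A^*$. Define $T := S^{1/2}A^*S^{-1/2}$, which is similar to $A^*$. A direct computation gives
\[
T^*T = S^{-1/2}(ASA^*)S^{-1/2} = \Id - S^{-1/2}FS^{-1/2} \le \Id,
\]
so $T$ is a contraction. Strong stability passes along the similarity since $T^n = S^{1/2}(A^*)^nS^{-1/2}$. For (ii), if $|I|$ is finite, then $F = \sum_{i\in I}\langle\cdot,f_i\rangle f_i$ has rank at most $|I|$, so $\Id - T^*T = S^{-1/2}FS^{-1/2}$ has finite-dimensional range. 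This also proves the formula for $T$ at the end of the theorem.

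Conversely for (i), suppose $T = RA^*R^{-1}$ with $T$ a strongly stable contraction and $R$ boundedly invertible. Set $S := R^*R$, which is positive and boundedly invertible, and define $F := S - ASA^*$. Using $RA^* = TR$, we obtain
\[
F = R^*R - (RA^*)^*(RA^*) = R^*(\Id - T^*T)R \ge 0.
\]
To realize $F$ as the frame operator of a Bessel sequence, pick an orthonormal basis $(e_i)_{i\in I}$ of $\calH$ and set $f_i := F^{1/2}e_i$; then
\[
\sum_{i\in I}|\langle g,f_i\rangle|^2 = \|F^{1/2}g\|^2 = \langle Fg,g\rangle,\qquad g\in\calH,
\]
so $(f_i)$ is Bessel with frame operator $F$. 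Strong stability of $A^* = R^{-1}TR$ transfers from $T$, and Theorem \ref{t:charac} then gives that $\calA$ is a frame. For the reverse implication of (ii), if $\ran(\Id - T^*T)$ is finite-dimensional, so is $\ran F$, and taking $(e_i)_{i=1}^N$ to be an orthonormal basis of $\ran F = \ran F^{1/2}$ produces a \emph{finite} Bessel family with frame operator $F$ (contributions from vectors in $(\ran F)^\perp = \ker F^{1/2}$ vanish).

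I expect the obstacles to be cosmetic rather than substantive: the algebra is routine once one sees the identification $T = S^{1/2}A^*S^{-1/2}$, and the only step requiring care is the canonical realization of a prescribed non-negative operator $F$ as a frame operator together with the cardinality bookkeeping needed in the finite-rank reverse direction of (ii). Strong stability being preserved by similarity is elementary (the conjugate $R$ is bounded in both directions).
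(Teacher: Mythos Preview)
Your proof is correct and follows essentially the same approach as the paper: both use $T = S^{1/2}A^*S^{-1/2}$ for the forward direction, and for the converse both realize the defect operator $\Id - T^*T$ (up to conjugation) as the frame operator of a Bessel sequence and then invoke Theorem \ref{t:charac}. The only cosmetic difference is that in the converse you apply Theorem \ref{t:charac} directly to $A$ with $S = R^*R$, whereas the paper first applies it to $T^*$ with $S = \Id$ and then transfers the frame property along the similarity; your route is arguably a touch more direct.
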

\begin{proof}
Assume that there exists a Bessel family $\calF = \{f_i : i\in I\}\subset\calH$ such that $\calA$ is a frame for $\calH$. By Theorem \ref{t:charac}, $A^*$ is strongly stable and $ASA^* = S - F$, where $S$ and $F$ are the frame operators of $\calA$ and $\calF$, respectively. Define $T := S^{1/2}A^*S^{-1/2}$. Then $T$ is strongly stable and $T^*T = \Id - F'$, where $F' = S^{-1/2}FS^{-1/2}$. Note that $F'$ is a non-negative selfadjoint operator (since $F$ is). Therefore, for $f\in\calH$ we have
$$
\|Tf\|^2 = \<T^*Tf,f\> = \|f\|^2 - \<F'f,f\>\,\le\,\|f\|^2,
$$
which shows that $T$ is a contraction. If $\calF$ is finite, then $\ran(\Id - T^*T) = \ran F'$ is finite-dimensional.

Conversely, assume that $A^*$ is similar to a strongly stable contraction $T\in L(\calH)$. Then the operator $G := \Id - T^*T$ is selfadjoint and non-negative. Hence, if $U : \ell^2(\N)\to\calH$ is any unitary operator and we put $g_i := G^{1/2}Ue_i$ ($e_i$ being the $i$-th standard basis vector of $\ell^2(\N)$), we have that $(g_i)_{i\in\N}$ is a Bessel sequence and $G = \sum_{i\in\N}\<\,\cdot\,,g_i\>g_i$ is its frame operator. By Theorem \ref{t:charac}, $(T^{*n}g_i)_{n,i\in\N}$ is a frame for $\calH$. Hence, if $A = LT^*L^{-1}$ with a boundedly invertible $L\in L(\calH)$, then $A^nLg_i = LT^{*n}g_i$, so that $\calA$ is a frame for $\calH$ with $f_i = Lg_i$, $i\in I := \N$. If $\ran G$ is finite-dimensional, we can choose $U$ such that $Ue_i\in\ker G^{1/2}$ for $i\ge m := \dim\ran G^{1/2}$. Then $g_i = 0$ for $i\ge m$ and hence $(A^nf_i)_{n\in\N,\,i\in I}$ is a frame for $\calH$, where $I := \{0,\ldots,m-1\}$.
\end{proof}

\begin{rem}
The operators that are similar to a contraction have been found by V.I.\ Paulsen in \cite[Cor.\ 3.5]{pa} to be exactly those operators which are {\it completely polynomially bounded}. For the definition of this term and more details we refer to \cite{pa}.
\end{rem}

In what follows, we will mainly focus on the situation in which $I$ is a finite index set -- or at least the frame operator of $(f_i)_{i\in I}$ is a compact operator. In this case, it is clear that $(f_i)_{i\in I}$ itself cannot be a frame for $\calH$ unless $\dim\calH < \infty$. The next proposition is key to most of our observations below. For the notion {\em semi-Fredholm} and the corresponding results used below we refer the reader to Appendix \ref{s:semi-fredholm}. Recall that an operator $T\in L(\calH)$ is said to be {\em finite-dimensional} or of {\em finite-rank} if $\dim\ran T < \infty$.

\begin{prop}\label{p:fredholm}
Assume that $\calA$ is a frame for $\calH$. If the frame operator $F$ of $(f_i)_{i\in I}$ is compact, then for each $\la\in\D$ the operator $A^* - \la$ is upper semi-Fredholm. If $|I|$ is finite \braces{in which case $F$ is even finite-dimensional\,}, then
\begin{equation}\label{e:fd}
\nul(A^* - \la)\,\le\,|I|,\qquad\la\in\D.
\end{equation}
\end{prop}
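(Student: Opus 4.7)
The plan is to exploit the key algebraic identity $ASA^* = S - F$ supplied by Theorem~\ref{t:charac}, where $S$ is the (boundedly invertible, positive) frame operator of $\calA$ and $F$ is the frame operator of $(f_i)_{i\in I}$. Because $S \ge \alpha\Id$ for some $\alpha > 0$, this identity should yield a quantitative bound on vectors in (approximate) kernels of $A^* - \lambda$ in terms of $F^{1/2}$, which is compact whenever $F$ is. From such a bound both the upper semi-Fredholm property and the rank bound fall out naturally.

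Concretely, for any $g\in\calH$ I would apply both sides of $ASA^* = S - F$ to $g$ and pair with $g$: the left side equals $\langle SA^*g,A^*g\rangle$ while the right equals $\langle Sg,g\rangle - \|F^{1/2}g\|^2$. If $g\in\ker(A^*-\lambda)$, then $A^*g = \lambda g$, and the identity reduces to
\begin{equation*}
(1-|\lambda|^2)\langle Sg,g\rangle \;=\; \|F^{1/2}g\|^2,
\end{equation*}
whence $\alpha(1-|\lambda|^2)\|g\|^2\le\|F^{1/2}g\|^2$ for $\lambda\in\D$. Thus $F^{1/2}$ restricted to $\ker(A^*-\lambda)$ is bounded below; since $F$ compact implies $F^{1/2}$ compact, a compact operator that is bounded below must have finite-dimensional domain, so $\dim\ker(A^*-\lambda)<\infty$. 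For the rank bound in the finite-$I$ case, note that $F=\sum_{i\in I}\langle\cdot,f_i\rangle f_i$ has $\dim\ran F\le|I|$, and injectivity of $F^{1/2}$ on $\ker(A^*-\lambda)$ then gives $\nul(A^*-\lambda)\le\dim\ran F^{1/2}=\dim\ran F\le|I|$.

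For closedness of $\ran(A^*-\lambda)$ I would use the standard criterion that an operator is upper semi-Fredholm iff every bounded sequence $(g_n)$ with $(A^*-\lambda)g_n\to 0$ admits a convergent subsequence. Writing $h_n := A^*g_n - \lambda g_n \to 0$ and expanding $\langle SA^*g_n,A^*g_n\rangle$ using $A^*g_n = \lambda g_n + h_n$, all cross terms involving $h_n$ are $o(1)$ because $(g_n)$ is bounded, leading to
\begin{equation*}
\alpha(1-|\lambda|^2)\|g_n\|^2 \;\le\; \|F^{1/2}g_n\|^2 + o(1).
\end{equation*}
Since $F^{1/2}$ is compact, a subsequence $g_{n_k}$ has $F^{1/2}g_{n_k}$ Cauchy; applying the same estimate to the differences $g_{n_k}-g_{n_l}$ (which also satisfy the approximate eigenvalue equation) shows $(g_{n_k})$ is itself Cauchy, hence convergent.

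The one mild subtlety I anticipate is making sure the ``$o(1)$'' error terms in the approximate-kernel estimate behave uniformly when passed to differences $g_{n_k}-g_{n_l}$; this is routine because $(g_n)$ is bounded and $h_n\to 0$, but it is the step where a careless write-up could slip. Aside from that, everything is a direct bookkeeping consequence of the identity \eqref{e:identity} combined with the compactness/finite-rank of $F$.
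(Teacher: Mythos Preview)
Your argument is correct, but it takes a genuinely different route from the paper's. The paper manipulates \eqref{e:identity} purely at the operator level: writing $AS(A^*-\la)=(\Id-\la A)S-F$ and noting that $B_\la:=\Id-\la A$ is invertible for $\la\in\D$ (since $\sigma(A)\subset\ol\D$), one gets $B_\la^{-1}AS(A^*-\la)=S-B_\la^{-1}F$. The right-hand side is a compact perturbation of the invertible operator $S$, hence Fredholm by Theorem~\ref{t:compi}, and then Lemma~\ref{l:komisch}(i) forces the right factor $A^*-\la$ to be upper semi-Fredholm. The rank bound comes from the inclusion $\ker(A^*-\la)\subset S^{-1}B_\la^{-1}\ran F$, read off the same identity.

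Your approach instead passes to quadratic forms, obtaining the scalar identity $(1-|\la|^2)\<Sg,g\>=\|F^{1/2}g\|^2$ on $\ker(A^*-\la)$, and then handles the closed-range part via the sequential characterization of upper semi-Fredholm operators. This is more elementary---it avoids Theorem~\ref{t:compi} and Lemma~\ref{l:komisch} entirely---and the scalar identity is arguably more transparent. The paper's route is shorter and, importantly, produces the operator identity \eqref{e:nice}, which it reuses later. Your ``mild subtlety'' about the $o(1)$ terms for differences is handled exactly as you say: the cross terms are bounded by $C\|g_{n_k}-g_{n_l}\|\cdot\|h_{n_k}-h_{n_l}\|+C\|h_{n_k}-h_{n_l}\|^2$, which vanishes as $k,l\to\infty$ since the $g_n$ are uniformly bounded and $h_n\to 0$.
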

\begin{proof}
We derive the claim from the identity \eqref{e:identity}. For $\la\in\D$ we have
$$
AS(A^*-\la) = ASA^* - \la AS = S - F - \la AS = (\Id - \la A)S - F.
$$
For all $\la\in\D$ the operator $B_\la := \Id - \la A$ is boundedly invertible. This is clear for $\la = 0$, and for $\la\neq 0$ we have $B_\la = \la(\la^{-1} - A)$, which is boundedly invertible as $\sigma(A)\subset\ol\D$. Thus,
\begin{equation}\label{e:nice}
B_\la^{-1}AS(A^* - \la) = S - B_\la^{-1}F.
\end{equation}
By Theorem \ref{t:compi}, the operator on the right hand side is Fredholm, and so $A^* - \la$ is upper semi-Fredholm by Lemma \ref{l:komisch}.

Now, let $|I|$ be finite and let $\la\in\D$ be an eigenvalue of $A^*$. If $f$ is a corresponding eigenvector, then \eqref{e:nice} yields $f = S^{-1}B_\la^{-1}Ff$. Hence, $\ker(A^*-\la)\subset S^{-1}B_\la^{-1}\ran F$, which implies \eqref{e:fd} as $\dim\ran F\le |I|$.
\end{proof}

In the proof of the next theorem we heavily make use of the punctured neighborhood theorem, Theorem \ref{t:pnt}.

\begin{thm}\label{t:alt}
If $\calA$ is a frame for $\calH$ and the frame operator of $(f_i)_{i\in I}$ is compact, then $\ind(A^*-\la) = \ind(A^*)$ for each $\la\in\D$ and exactly one of the following cases holds:
\begin{enumerate}
\item[{\bf (i)}]   $\sigma(A^*) = \ol\D$ and $\sigma_p(A^*) = \D$.
\item[{\bf (ii)}]  $\sigma(A^*) = \ol\D$ and $\sigma_p(A^*)$ is discrete in $\D$.
\item[{\bf (iii)}] $\sigma(A^*)$ is discrete in $\D$.
\end{enumerate}
In the case {\bf (i)}, each $\la\in\D$ is an eigenvalue of $A^*$ with infinite algebraic multiplicity, whereas in the cases {\bf (ii)} and {\bf (iii)} the eigenvalues of $A^*$ in $\D$ have finite algebraic multiplicities. If case {\bf (iii)} holds, then we have $\ind(A^*) = 0$.
\end{thm}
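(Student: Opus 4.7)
My plan is the following. By Proposition \ref{p:fredholm}, $A^* - \la$ is upper semi-Fredholm for every $\la \in \D$, and Theorem \ref{t:pnt} together with the connectedness of $\D$ makes $\la \mapsto \ind(A^* - \la)$ locally constant and hence identically equal to $\iota := \ind(A^*)$. Applied once more, Theorem \ref{t:pnt} yields, for each $\la_0 \in \D$, a punctured disk on which $\nul(A^* - \mu)$ takes a constant value $n_*(\la_0) \le \nul(A^* - \la_0)$ and $\defi(A^* - \mu)$ takes a constant value $d_*(\la_0) \le \defi(A^* - \la_0)$. Both $n_*$ and $d_*$ are themselves locally constant on $\D$, so they are constants $n_0 \in \N$ and $d_0 \in \N \cup \{\infty\}$ with $n_0 - d_0 = \iota$, and the exceptional set $E := \{\la \in \D : \nul(A^* - \la) > n_0 \text{ or } \defi(A^* - \la) > d_0\}$ is discrete in $\D$. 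The trichotomy follows by splitting on $(n_0, d_0)$: if $n_0 \ge 1$ then $\D \subseteq \sigma_p(A^*)$ and, using Lemma \ref{l:armenak}, $\sigma_p(A^*) = \D$ and $\sigma(A^*) = \ol\D$, which is case {\bf (i)}; if $n_0 = 0$ then $\sigma_p(A^*) \cap \D \subseteq E$ is discrete, and one further has the subcase $d_0 = 0$ (then $A^* - \mu$ is invertible on $\D \setminus E$, so $\sigma(A^*) \cap \D$ is discrete and $\iota = 0$, case {\bf (iii)}) or $d_0 \ge 1$ (then $\D \setminus E \subseteq \sigma(A^*)$ forces $\sigma(A^*) = \ol\D$, case {\bf (ii)}).

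For the algebraic multiplicities in cases {\bf (ii)} and {\bf (iii)}, fix $\la_0 \in \sigma_p(A^*) \cap \D$. Discreteness of $\sigma_p(A^*) \cap \D$ gives a punctured neighborhood $V$ of $\la_0$ on which $A^* - \mu$ is injective, so $A^*$ has the single-valued extension property at $\la_0$: any analytic $f : U \to \calH$ on a neighborhood $U$ of $\la_0$ with $(A^* - \mu) f(\mu) \equiv 0$ must vanish on $U \cap V$ and hence, by continuity, on all of $U$. By the Kato-type theorem that upper semi-Fredholm plus SVEP at $\la_0$ implies finite ascent, there exists $p \in \N$ with $\ker(A^* - \la_0)^p = \ker(A^* - \la_0)^{p+1}$; the algebraic eigenspace coincides with $\ker(A^* - \la_0)^p$, and since $(A^* - \la_0)^p$ is upper semi-Fredholm as a product of upper semi-Fredholm operators, its nullity is finite. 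Hence the algebraic multiplicity at $\la_0$ is finite.

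The hard part is case {\bf (i)}. Fix $\la_0 \in \D$; the plan is to extract an infinite Jordan chain at $\la_0$ from a holomorphic family of eigenvectors. Since $n_0 \ge 1$, the nullity of $A^* - \mu$ is constantly $n_0$ on a punctured disk around $\la_0$, so the kernels $\ker(A^* - \mu)$ form a holomorphic family of subspaces there, and one can select a nontrivial holomorphic section $\mu \mapsto v(\mu) \in \ker(A^* - \mu)$. Being norm-bounded near $\la_0$, this section extends holomorphically to $\la_0$ by Riemann's removable singularity theorem, and after dividing by the largest power of $(\mu - \la_0)$ we may assume $v_0 := v(\la_0) \ne 0$. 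Expanding $v(\mu) = \sum_{k \ge 0} v_k (\mu - \la_0)^k$ and matching Taylor coefficients in $(A^* - \mu) v(\mu) \equiv 0$ yields $(A^* - \la_0) v_0 = 0$ and $(A^* - \la_0) v_k = v_{k-1}$ for $k \ge 1$, so $(A^* - \la_0)^k v_k = v_0 \ne 0$ and $v_0, v_1, \ldots, v_k$ are linearly independent elements of $\ker(A^* - \la_0)^{k+1}$ for every $k$. Hence $\dim \ker(A^* - \la_0)^{k+1} \ge k + 1$ and the algebraic multiplicity is infinite. The main obstacle is the clean production of the holomorphic section in the presence of the exceptional set $E$; the standard route goes through a local right inverse of $A^* - \mu$ modulo its kernel (available wherever the nullity is locally constant) and then treats exceptional points via the removable singularity argument above.
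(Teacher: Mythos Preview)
Your derivation of the trichotomy matches the paper's: use Proposition~\ref{p:fredholm} to get that $A^*-\la$ is upper semi-Fredholm on $\D$, apply Theorem~\ref{t:pnt} and connectedness to make the index constant, and then read off the three cases from the constant value $n_0$ of $\nul(A^*-\mu)$ off a discrete set together with $\iota=\ind(A^*)$. That part is fine and essentially identical to the paper.

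Where you diverge is in the treatment of the algebraic multiplicities, and here the paper's route is much shorter. You handle cases {\bf (ii)}/{\bf (iii)} via SVEP plus an external ``Kato-type'' finite-ascent theorem, and case {\bf (i)} via a holomorphic eigenvector section whose extendability across the exceptional set you yourself flag as ``the main obstacle'' (indeed, you assert norm-boundedness of the section near $\la_0$ without justification; a priori it could have a pole or worse). The paper avoids all of this by squeezing more out of Theorem~\ref{t:pnt}: that theorem already provides the drop $k=\dim\bigl(\ker(A^*-\la_0)\big/(\ker(A^*-\la_0)\cap R_\infty(A^*-\la_0))\bigr)$, so that $n_0=\nul(A^*-\la_0)-k$. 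Hence $n_0=0$ is equivalent to $\ker(A^*-\la_0)\cap R_\infty(A^*-\la_0)=\{0\}$, and this single condition decides both multiplicity claims at once. If it holds, the ascent at $\la_0$ is finite and the algebraic eigenspace equals some $\ker(A^*-\la_0)^p$, which is finite-dimensional because $(A^*-\la_0)^p$ is upper semi-Fredholm. If it fails (case {\bf (i)}), pick $0\neq x\in\ker(A^*-\la_0)\cap R_\infty(A^*-\la_0)$; for each $n$ choose $y_n$ with $(A^*-\la_0)^n y_n=x$, so $y_n\in\ker(A^*-\la_0)^{n+1}\setminus\ker(A^*-\la_0)^n$, and the algebraic eigenspace is infinite-dimensional. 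No SVEP, no holomorphic bundles, no removable-singularity argument needed.
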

\begin{proof}
By Proposition \ref{p:fredholm}, $A^*-\la$ is upper semi-Fredholm for each $\la\in\D$. Hence it follows from the punctured neighborhood theorem, Theorem \ref{t:pnt}, and a compactness argument that $\ind(A^*-\la)$ is constant on $\D$. Similarly, one sees that $\nul(A^*-\la)$ is constant on $\D\setminus\Delta$, where $\Delta$ is a discrete subset of $\D$. Denote this constant value by $n_0$. Then it is immediate that case {\bf (i)} is satisfied exactly if $n_0 > 0$. If $n_0 = 0$, then case {\bf (iii)} occurs if and only if $\ind(A^*) = 0$.

Let $\la_0\in\D$ be an eigenvalue of $A^*$. For $\la\neq\la_0$ close to $\la_0$ we have $\la\in\D\setminus\Delta$ and hence $n_0 = \nul(A^*-\la) = \nul(A^* - \la_0) - k$, where (see Theorem \ref{t:pnt})
$$
k = \dim\big(\ker(A^*-\la_0)/\left(\ker(A^* - \la_0)\cap R_\infty(A^*-\la_0)\right)\big).
$$
Hence, cases {\bf (ii)} and {\bf (iii)} occur exactly when $\nul(A^*-\la_0) = k$. This happens if and only if $\ker(A^* - \la_0)\cap R_\infty(A^*-\la_0) = \{0\}$. But the latter means that the algebraic multiplicity of $\la_0$ as an eigenvalue of $A^*$ is finite.
\end{proof}

\begin{cor}\label{c:r=1}
If $\dim\calH = \infty$, $\calA$ is a frame for $\calH$ with frame operator $S$, and the frame operator of $(f_i)_{i\in I}$ is compact, then
$$
r(A) = \big\|S^{-1/2}AS^{1/2}\big\| = 1.
$$
In particular, $\|A^n\|\ge 1$ for all $n\in\N$.
\end{cor}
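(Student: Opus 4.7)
The plan is to sandwich both $r(A)$ and $\|S^{-1/2}AS^{1/2}\|$ between $1$ and $1$. The upper bound comes for free from the similarity-to-contraction description in Theorem \ref{t:admissible}, while the matching lower bound $r(A)\ge 1$ is the substantive part and requires Theorem \ref{t:alt}.

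For the upper bound I would invoke Theorem \ref{t:admissible}: the operator $T := S^{1/2}A^*S^{-1/2}$ is a strongly stable contraction, so $\|T\|\le 1$. Because $S^{\pm 1/2}$ are self-adjoint, $T^* = S^{-1/2}AS^{1/2}$, whence $\|S^{-1/2}AS^{1/2}\| = \|T\|\le 1$. Similarity preserves the spectrum, so $\sigma(A^*) = \sigma(T)$ and consequently
$$r(A) \;=\; r(A^*) \;=\; r(T) \;\le\; \|T\| \;\le\; 1.$$

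The heart of the proof, and the main obstacle, is to show $r(A)\ge 1$; here I would argue by contradiction. Suppose $r(A) < 1$. Then $\sigma(A^*)\subset\D$, which rules out cases \textbf{(i)} and \textbf{(ii)} of Theorem \ref{t:alt} (both of which assert $\sigma(A^*)=\ol\D$), forcing case \textbf{(iii)}. In case \textbf{(iii)} the intersection $\sigma(A^*)\cap\D$ is discrete in $\D$; combined with $\sigma(A^*)\subset\D$ and the compactness of the spectrum, this forces $\sigma(A^*)$ to be a \emph{finite} set. Theorem \ref{t:alt} further guarantees that each eigenvalue of $A^*$ in $\D$ has finite algebraic multiplicity, so each isolated point $\la\in\sigma(A^*)$ has a Riesz spectral projector $P_\la$ of finite rank. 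The finite-spectrum Riesz decomposition $\Id = \sum_{\la\in\sigma(A^*)} P_\la$ then expresses $\Id$ as a finite sum of finite-rank operators, contradicting $\dim\calH = \infty$. Hence $r(A)\ge 1$.

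Combining the two sides yields $1\le r(A)\le\|S^{-1/2}AS^{1/2}\|\le 1$, proving both equalities. The final claim $\|A^n\|\ge 1$ then follows at once from the spectral-radius inequality $r(A)^n = r(A^n) \le \|A^n\|$.
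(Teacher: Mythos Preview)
Your proof is correct and follows essentially the same approach as the paper. The paper's proof is very terse---it simply asserts ``It follows from Theorem \ref{t:alt} that $r(A)=1$'' and then runs the similarity/contraction argument exactly as you do---so your contradiction argument for case {\bf (iii)} (finite spectrum, finite algebraic multiplicities, finite-rank Riesz projectors summing to $\Id$) is precisely the reasoning the paper leaves for the reader to supply.
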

\begin{proof}
It follows from Theorem \ref{t:alt} that $r(A) = 1$. By Theorem \ref{t:admissible}, the operator $B := S^{-1/2}AS^{1/2}$ is a contraction. Since $B$ is similar to $A$, we have $\sigma(B) = \sigma(A)$ and therefore $1 = r(A) = r(B)\le\|B\|\le 1$.
\end{proof}

We define the {\em essential spectrum} $\sess(T)$ of $T\in L(\calH)$ by those $\la\in\C$ for which $T - \la$ is not semi-Fredholm.

\begin{cor}
Assume that $\calA$ is a frame for $\calH$ and the frame operator of $(f_i)_{i\in I}$ is compact. Then
$$
\sess(A^*) = \sigma_c(A^*) = \sigma(A^*)\cap\T.
$$
If, in addition, $\ind(A^*)\neq 0$, then
$$
\sess(A^*) = \sigma_c(A^*) = \T.
$$
\end{cor}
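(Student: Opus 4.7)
The plan is to establish the chain of equalities by proving two inclusions in each direction and then to read off the second assertion from the case analysis in Theorem \ref{t:alt}.

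First I would prove $\sess(A^*)\subset\sigma(A^*)\cap\T$. The inclusion $\sess(A^*)\subset\sigma(A^*)$ is general (a boundedly invertible operator is Fredholm of index zero, hence semi-Fredholm). The remaining part, that $\sess(A^*)$ avoids the open disk, is exactly the content of Proposition \ref{p:fredholm}, which tells us that $A^*-\la$ is upper semi-Fredholm for every $\la\in\D$.

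Next I would prove the reverse inclusion $\sigma(A^*)\cap\T\subset\sess(A^*)$. Fix $\la\in\sigma(A^*)\cap\T$. By Lemma \ref{l:armenak}, $\la\in\sigma_c(A^*)$, so $A^*-\la$ is injective with dense range but is not boundedly invertible. If $\la$ were not in $\sess(A^*)$, then $A^*-\la$ would be semi-Fredholm; in particular $\ran(A^*-\la)$ would be closed, hence equal to $\calH$. Combined with injectivity this forces $A^*-\la$ to be a bijection and therefore boundedly invertible, contradicting $\la\in\sigma(A^*)$. This yields $\sess(A^*)=\sigma(A^*)\cap\T$.

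For the equality $\sigma_c(A^*)=\sigma(A^*)\cap\T$, one inclusion is Lemma \ref{l:armenak}. For the other, suppose $\la\in\sigma_c(A^*)\cap\D$. By Proposition \ref{p:fredholm}, $A^*-\la$ has closed range, which together with density (from $\la\in\sigma_c(A^*)$) and injectivity (since $\la\notin\sigma_p(A^*)$) makes $A^*-\la$ bijective, contradicting $\la\in\sigma(A^*)$. Hence $\sigma_c(A^*)\subset\T$, and combined with $\sigma_c(A^*)\subset\sigma(A^*)$ we obtain the desired equality.

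Finally, for the strengthened statement under $\ind(A^*)\neq 0$, I would invoke Theorem \ref{t:alt}: case (iii) forces $\ind(A^*)=0$, so the hypothesis excludes it, leaving cases (i) and (ii), in both of which $\sigma(A^*)=\ol\D$. Therefore $\sigma(A^*)\cap\T=\T$, and the previously established equalities close the argument. I do not anticipate a significant obstacle here; the proof is essentially a bookkeeping exercise that combines the semi-Fredholm information from Proposition \ref{p:fredholm}, the spectral dichotomy in Theorem \ref{t:alt}, and the elementary observation that a semi-Fredholm operator with dense range is onto.
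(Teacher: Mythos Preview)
Your proof is correct and uses the same ingredients as the paper (Proposition \ref{p:fredholm}, Lemma \ref{l:armenak}, Theorem \ref{t:alt}). The paper is slightly more economical: instead of proving each equality by two inclusions, it closes a single cycle $\sigma_c(A^*)\subset\sess(A^*)\subset\sigma(A^*)\cap\T\subset\sigma_c(A^*)$, noting that the first inclusion holds by definition (a point in $\sigma_c$ has non-closed range, hence cannot be semi-Fredholm), which spares the two invertibility-by-contradiction arguments you give.
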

\begin{proof}
$\sigma_c(A^*)\subset\sess(A^*)$ holds by definition and $\sess(A^*)\subset\sigma(A^*)\cap\T$ is a direct consequence of Proposition \ref{p:fredholm}. The remaining inclusion $\sigma(A^*)\cap\T\subset\sigma_c(A^*)$ holds due to Lemma \ref{l:armenak}. If $\ind(A^*)\neq 0$, then either case {\bf (i)} or case {\bf (ii)} holds. In these cases, we have $\sigma(A^*) = \ol\D$ and hence, clearly, $\sigma(A^*)\cap\T = \T$.
\end{proof}

In the proof of Theorem \ref{t:alt} we have not used that the operator $A^*$ is strongly stable. We incorporate this in the proof of the next theorem, where we make use of a theorem from \cite{tu}.

\begin{thm}\label{t:tu}
Let $I$ be finite and assume that $\calA$ is a frame for $\calH$. Then $\defi(A^*-\la) = 0$ for all $\la\in\D\setminus\Delta$, where $\Delta\subset\D$ is discrete in $\D$. In particular, either case {\bf (i)} or case {\bf (iii)} occurs. Case {\bf (iii)} holds if and only if $\ind(A^*) = 0$. In this case, also $A$ is strongly stable.
\end{thm}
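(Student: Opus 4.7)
The plan is to combine the similarity reduction from Theorem~\ref{t:admissible} with a Sz.-Nagy--Foias-type structural input cited as~\cite{tu}, and then to read off the remaining assertions via the Fredholm bookkeeping already developed in Theorem~\ref{t:alt}.

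First, by Lemma~\ref{l:armenak} $A^*$ is strongly stable, and since $|I|<\infty$ Theorem~\ref{t:admissible}(ii) furnishes a boundedly invertible $L\in L(\calH)$ (explicitly $L=S^{1/2}$ with $S$ the frame operator of $\calA$) such that $T:=LA^*L^{-1}$ is a strongly stable contraction with $\Id-T^*T$ of finite rank at most $|I|$. Since $T-\la=L(A^*-\la)L^{-1}$, similarity transports $\nul$, $\defi$ and $\ind$ between $A^*-\la$ and $T-\la$ for every $\la\in\D$. The heart of the argument is then the theorem invoked from \cite{tu}: a strongly stable contraction $T$ with $\Id-T^*T$ of finite rank has only a Blaschke (hence discrete in $\D$) set of eigenvalues of $T^*$ inside $\D$. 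Using the identity $\defi(A^*-\la)=\defi(T-\la)=\nul(T^*-\ol\la)$, this produces a discrete set $\Delta\subset\D$ outside of which $\defi(A^*-\la)=0$, which is the first assertion.

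Next I would rule out case~(ii) of Theorem~\ref{t:alt}. In that case $\sigma_p(A^*)\cap\D$ is discrete and $\nul(A^*-\la)$ is constant off another discrete set $\Delta'\subset\D$, so any $\la\in\D\setminus(\Delta\cup\Delta'\cup\sigma_p(A^*))$ satisfies $\nul(A^*-\la)=\defi(A^*-\la)=0$; since $A^*-\la$ is upper semi-Fredholm and hence has closed range (Proposition~\ref{p:fredholm}), it is in fact invertible, contradicting $\sigma(A^*)=\ol\D$. Hence only cases~(i) and~(iii) survive. For the equivalence with $\ind(A^*)=0$: Theorem~\ref{t:alt} already gives (iii)$\Rightarrow\ind(A^*)=0$; conversely, if $\ind(A^*)=0$, the constancy of the Fredholm index on $\D$ combined with $\defi(A^*-\la)=0$ on $\D\setminus\Delta$ forces $\nul(A^*-\la)=0$ there, so $A^*-\la$ is invertible on $\D\setminus\Delta$ and $\sigma(A^*)\cap\D\subseteq\Delta$ is discrete, i.e.\ case~(iii) holds.

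Finally, for the strong stability of $A$ in case~(iii): here $\ind T=\ind(A^*)=0$, so $T$ is a Fredholm strongly stable contraction of index $0$ with $\Id-T^*T$ of finite rank. Invoking the Fredholm/contraction-defect identity $\ind(T)=\dim\ran(\Id-T^*T)-\dim\ran(\Id-TT^*)$ valid in this setting, one concludes that $\dim\ran(\Id-TT^*)=\dim\ran(\Id-T^*T)<\infty$, so the defect spaces $\calD_T$ and $\calD_{T^*}$ have equal finite dimension. The Sz.-Nagy--Foias characteristic function $\Theta_T$ is then a square matrix-valued $*$-inner function, hence automatically two-sided inner, so $T\in C_{00}$ and in particular $T^*$ is strongly stable; since $A$ is similar to $T^*$, $A$ is strongly stable. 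The principal obstacle is the Blaschke-type input from \cite{tu} underpinning the first paragraph: it is the only place where the finiteness of $I$ enters decisively, and it is what ultimately excludes case~(ii). A secondary technical point is the index identity in the last paragraph, which belongs to the standard Sz.-Nagy--Foias framework but must be reconciled with the fact that the contraction $T$ is produced only up to similarity and is not \emph{a priori} completely non-unitary.
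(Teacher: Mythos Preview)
Your reduction to the strongly stable contraction $T=S^{1/2}A^*S^{-1/2}$ with finite-rank defect $\Id-T^*T$ is the same as the paper's, as is the Fredholm bookkeeping that excludes case~(ii) and characterizes case~(iii). The substantive differences lie precisely at the two places you flag as obstacles.

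\textbf{Discreteness of $\sigma_p(T^*)\cap\D$.} You attribute to \cite{tu} the statement that a strongly stable contraction with finite-rank defect has $\sigma_p(T^*)\cap\D$ discrete. That is not what \cite{tu} says; \cite{tu} shows that a $C_{00}$ contraction with Hilbert--Schmidt defect is of class $C_0$. The paper bridges the gap by invoking the triangulation $T=\smallmat{T_{01}}{C}{0}{T_{00}}$ from \cite[Theorem~II.4.1]{nfbk} with $T_{01}\in C_{01}$ and $T_{00}\in C_{00}$, following \cite{uch}. Expanding $\Id-T^*T$ blockwise, all four entries have finite rank; since $T_{01}^*$ is bounded below (as $T_{01}\in C_{01}$), the off-diagonal $C$ itself has finite rank, hence so does $\Id-T_{00}^*T_{00}$. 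Now \cite{tu} applies to the $C_{00}$ piece, giving $T_{00}\in C_0$ and therefore $\sigma(T_{00})\cap\D$ discrete (\cite[Theorem~III.5.1]{nfbk}). The triangular structure and the injectivity of $T_{01}^*-\ol\la$ then force $\ker(T^*-\ol\la)=\{0\}$ for $\la\in\D\setminus\sigma(T_{00})$. So the input from \cite{tu} is genuinely used, but only after the $C_{00}$ part has been isolated; your one-line invocation skips this reduction.

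\textbf{Finiteness of $\ran(\Id-TT^*)$ in case (iii).} The identity $\ind(T)=\dim\ran(\Id-T^*T)-\dim\ran(\Id-TT^*)$ is not part of the standard Sz.-Nagy--Foias toolkit and would need proof here; as written it already presupposes that $\Id-TT^*$ has finite rank, which is exactly what you are trying to deduce. The paper sidesteps this with an elementary polar-decomposition trick: since $\ind T=0$ one may write $T=U|T|$ with $U$ \emph{unitary} (\cite[Lemma~2.9]{dp}); then $|T|=\Id-F_1(\Id+|T|)^{-1}$ gives $T=U-(\text{finite rank})$, whence $\Id-TT^*$ has finite rank. Strong stability of $T^*$ then follows from \cite[Theorem~2]{tu}. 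Your characteristic-function route (square matrix-valued $*$-inner, hence two-sided inner, hence $T\in C_{00}$) is a legitimate alternative to that last citation once $\dim\ran(\Id-TT^*)<\infty$ is secured; and your concern about complete non-unitarity is unnecessary, since strong stability of $T$ already rules out any unitary summand.
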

\begin{proof}
Let $S$ and $F$ be the frame operators of $\calA$ and $(f_i)_{i\in I}$, respectively, and define $T := S^{1/2}A^*S^{-1/2}$. By Theorem \ref{t:admissible}, the operator $T$ is a strongly stable contraction and $T^*T = \Id - F_1$, where $F_1 = S^{-1/2}FS^{-1/2}$. Since $T$ and $A^*$ are similar, it suffices to prove the corresponding statements for the operator $T$.

Let us show the first part of the theorem. To this end, we shall use techniques from the proof of \cite[Lemma 1.3]{uch}. The key in this proof is a triangulation of the contraction $T$ of the form (see \cite[Theorem II.4.1]{nfbk})
$$
T = \mat{T_{01}}C0{T_{00}}
$$
with respect to a decomposition $\calH = \calH_{01}\oplus\calH_{00}$. Here, $T_{01}\in C_{01}$ (that is, $\inf\{\|(T_{01}^*)^nf\| : n\in\N\} > 0$ for each $f\in\calH_{01}\setminus\{0\}$) and $T_{00}\in C_{00}$ (i.e., both $T_{00}$ and $T_{00}^*$ are strongly stable). We have
$$
\Id - T^*T = \Id - \mat{T_{01}^*}0{C^*}{T_{00}^*}\mat{T_{01}}C0{T_{00}} = \mat{\Id - T_{01}^*T_{01}}{-T_{01}^*C}{-C^*T_{01}}{\Id - C^*C - T_{00}^*T_{00}}.
$$
Hence, all entries in the latter operator matrix are of finite rank. In particular, $T_{01}$ is upper semi-Fredholm (see Theorem \ref{t:compi} and Lemma \ref{l:komisch}). Since $T_{01}\in C_{01}$, the operator $T_{01}^*$ is injective and thus it has a bounded left-inverse. Hence, as $T_{01}^*C$ is of finite rank we infer that also $C$ is of finite rank. Thus, $\Id - T_{00}^*T_{00}$ is of finite rank. Since $T_{00}\in C_{00}$, this yields that $T_{00}$ is a so-called $C_0$-contraction (see \cite{tu}). Consequently, the spectrum of $T_{00}$ in $\D$ is discrete (cf.\ \cite[Theorem III.5.1]{nfbk}).

Let $\la\in\D\setminus\sigma(T_{00})$. Then $(T^*-\ol\la)f = 0$ implies $(T_{01}^*-\ol\la)g = 0$ and $C^*g + (T_{00}^*-\ol\la)h = 0$, where $f = g+h$, $g\in\calH_{01}$, $h\in\calH_{00}$. But as $T_{01}^*-\ol\la$ is injective (due to $T_{01}\in C_{01}$), we conclude that $g = 0$ and therefore also $h = 0$ as $\ol\la\in\rho(T_{00}^*)$. Hence, for $\la\in\D\setminus\sigma(T_{00})$ we have that $\defi(T-\la) = \nul(T^*-\ol\la) = 0$. This also implies that case {\bf (ii)} cannot occur and that case {\bf (iii)} holds if and only if $\ind(T) = 0$.

Assume that $\ind(T) = 0$. In order to show that also $T^*$ is strongly stable, due to \cite[Theorem 2]{tu} it suffices to prove that $\Id - TT^*$ is of finite rank. To see this, we observe that there exists a representation $T = U|T|$ of $T$, where $|T|= (T^*T)^{1/2}$ and $U$ is a unitary operator in $\calH$, see \cite[Lemma 2.9]{dp}. We have $F_1 = \Id - T^*T = \Id - |T|^2$ and thus $|T| = \Id - F_1(\Id+|T|)^{-1}$. Therefore, $T = U|T| = U - F_2$ with some finite rank operator $F_2$, and consequently
$$
TT^* = \Id - \left[F_2U^* + (U - F_2)F_2^*\right].
$$
Thus, $\Id - TT^*$ is of finite rank.
\end{proof}

\begin{rem}
It follows from the proof of Theorem \ref{t:tu} and the references used therein that the claim of the theorem remains to hold if we replace the condition that $I$ be finite by the requirement that the frame operator of $(f_i)_{i\in I}$ is of trace class.
\end{rem}

By $R$ and $L$ we denote the right-shift and the left-shift on $\ell^2(\N)$. That is, $R,L\in L(\ell^2(\N))$, $(Lf)(j) = f(j+1)$ for $j\in\N$ and $(Rf)(0) = 0$, as well as $(Rf)(j) = f(j-1)$ for $j\ge 1$. Moreover, let $e_k$ denote the $k$-th standard basis vector of $\ell^2(\N)$, $k\in\N$. The following example shows in particular that case {\bf (i)} in Theorem \ref{t:alt} cannot be neglected as a possibility for an operator generating a frame.

\begin{ex}\label{ex:shift}
Let $\calH = \ell^2(\N)$, $m\in\N\setminus\{0\}$, and $I := \{0,\ldots,m-1\}$. If we put $A := R^m$, then we have
$$
\calA = \left(A^ne_i\right)_{n\in\N,\,i\in I} = \left((R^m)^ne_i\right)_{n\in\N,\,i\in I} = (e_{i + nm})_{n\in\N,\,i\in I} = (e_k)_{k\in\N}.
$$
Hence, $(A^ne_i)_{n\in\N,\,i\in I}$ is an orthonormal basis of $\calH$ and it is easily seen that every $\la\in\D$ is an eigenvalue of $A^*$. Thus, we are in the situation of case {\bf (i)}.
\end{ex}

The next theorem shows that the orthonormal bases in Example \ref{ex:shift} are the prototype of all Riesz bases of the form $\calA$ in the sense of the following theorem.

\begin{thm}\label{t:riesz_sequence}
Let $A\in L(\calH)$ and $f_i\in\calH$, $i\in I$, where $I = \{0,\ldots,m-1\}$. Then the following statements are equivalent.
\begin{enumerate}
\item[{\rm (i)}]  $\calA$ is a Riesz basis of $\calH$.
\item[{\rm (ii)}] There exists a boundedly invertible operator $V\in L(\ell^2(\N),\calH)$ such that
$$
A = VR^mV^{-1}
\qquad\text{and}\qquad
f_i = Ve_i,\;i\in I.
$$
\end{enumerate}
\end{thm}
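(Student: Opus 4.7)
The proof splits cleanly into the two implications, and both amount to little more than keeping track of how one indexes the standard basis of $\ell^2(\N)$. The key observation is that the map $\N\to I\times\N$ given by $k\mapsto(i,n)$ with $k = i + nm$ and $i\in\{0,\ldots,m-1\}$ is a bijection.

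\medskip
\noindent\textbf{Plan for (ii) $\Rightarrow$ (i).} Assume $A = VR^mV^{-1}$ and $f_i = Ve_i$ for some boundedly invertible $V\in L(\ell^2(\N),\calH)$. Then $A^n = VR^{nm}V^{-1}$, so
$$
A^nf_i = VR^{nm}V^{-1}Ve_i = VR^{nm}e_i = Ve_{i+nm}.
$$
As $(n,i)$ ranges over $\N\times I$, the index $i+nm$ ranges bijectively over $\N$, so $\calA$ is the image under $V$ of the standard orthonormal basis $(e_k)_{k\in\N}$ of $\ell^2(\N)$. A bounded invertible operator sends an orthonormal basis to a Riesz basis, so $\calA$ is a Riesz basis of $\calH$.

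\medskip
\noindent\textbf{Plan for (i) $\Rightarrow$ (ii).} Suppose $\calA = (A^nf_i)_{n\in\N,\,i\in I}$ is a Riesz basis of $\calH$. Use the bijection $k=i+nm$ to transfer $\calA$ into a sequence indexed by $\N$, namely $g_k := A^nf_i$ for $k = i + nm$, $i\in I$, $n\in\N$. Since $(g_k)_{k\in\N}$ is a Riesz basis of $\calH$, there is a unique bounded invertible operator $V\in L(\ell^2(\N),\calH)$ with $Ve_k = g_k$ for every $k\in\N$ (this is the standard characterization of Riesz bases). In particular $Ve_i = g_i = A^0f_i = f_i$ for $i\in I$, giving the second identity in (ii).

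\medskip
\noindent\textbf{The shift intertwining.} It remains to verify $AV = VR^m$. Compute on a basis vector $e_k$ with $k = i + nm$: on the one hand
$$
VR^me_{i+nm} = Ve_{i+(n+1)m} = g_{i+(n+1)m} = A^{n+1}f_i,
$$
and on the other hand
$$
AVe_{i+nm} = Ag_{i+nm} = A\cdot A^nf_i = A^{n+1}f_i.
$$
Hence $AV$ and $VR^m$ agree on the orthonormal basis $(e_k)_{k\in\N}$ of $\ell^2(\N)$, so they coincide on all of $\ell^2(\N)$. Since $V$ is boundedly invertible, this is equivalent to $A = VR^mV^{-1}$, completing (ii).

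\medskip
\noindent\textbf{Main (non-)obstacle.} There is no real obstacle here; the only subtle point is to choose the correct enumeration $k = i+nm$ so that the shift $R^m$ on $\ell^2(\N)$ genuinely models the action of $A$ on the blocks of the iteration. Once this bookkeeping is fixed, both directions reduce to evaluating each side on standard basis vectors.
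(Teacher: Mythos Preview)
Your proof is correct and follows exactly the same approach as the paper's: define $V$ via $Ve_{i+nm} = A^nf_i$ using the Riesz basis characterization, read off $f_i = Ve_i$, and verify $AV = VR^m$ on the standard basis vectors. The paper merely omits the details you spelled out for (ii)$\Rightarrow$(i), dismissing that direction as clear.
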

\begin{proof}
It is clear that (ii) implies (i). So, assume that $(A^nf_i)_{n\in\N,\,i\in I}$ is a Riesz basis of $\calH$. Then there exists a boundedly invertible operator $V\in L(\ell^2(\N),\calH)$ such that $A^nf_i = Ve_{i+nm}$, $n\in\N$, $i\in I$. In particular, for $i\in I$ we have $f_i = Ve_i$. Also, for $n\in\N$ and $i\in I$ we have
$$
AVe_{i+nm} = A^{n+1}f_i = Ve_{i + nm + m} = VR^me_{i+nm},
$$
and therefore $AV = VR^m$.
\end{proof}

Finally, we turn back to the motivation of Dynamical Sampling in the Introduction, where $A^*$ was an instance of an operator semigroup. Recall that a semigroup of operators is a collection $(T_t)_{t\ge 0}\subset L(\calH)$ satisfying $T_{s+t} = T_sT_t$ for all $s,t\in [0,\infty)$.

\begin{prop}\label{p:not(i)}
Let $A^* = T_{t_0}$, $t_0 > 0$, be an instance of a semigroup $(T_t)_{t\ge 0}$ of operators and let the frame operator of $(f_i)_{i\in I}$ be compact. Then if $\calA$ is a frame for $\calH$, either case {\bf (ii)} or case {\bf (iii)} occurs. In case {\bf (ii)} we have $\ind(A^*) = -\infty$.
\end{prop}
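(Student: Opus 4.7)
My plan is to combine Theorem~\ref{t:alt} with a Fredholm-index divisibility argument that exploits the semigroup factorization of $A^*$. After placing ourselves in case (i), (ii), or (iii) via Theorem~\ref{t:alt}, I would use, for each positive integer $n$, the commuting factorization $A^* = T_{t_0/n}^n$. Because $A^*$ is upper semi-Fredholm by Proposition~\ref{p:fredholm} applied at $\la = 0$, Atkinson's criterion yields $S \in L(\calH)$ and a compact $K$ with $SA^* = \Id - K$; rewriting this as $(S\,T_{t_0/n}^{n-1})\,T_{t_0/n} = \Id - K$ shows that $T_{t_0/n}$, and (by the analogous identity applied to $T_{t_0/n}^{n-k}$ times $T_{t_0/n}^k$ for $1 \le k \le n$) every intermediate power, is upper semi-Fredholm.

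Next I would invoke the standard index additivity for composition of commuting (upper) semi-Fredholm operators to obtain, by induction on $k$,
\begin{equation*}
\ind(A^*)\,=\,\ind(T_{t_0/n}^n)\,=\,n\,\ind(T_{t_0/n})\qquad\text{for every }n\ge 1.
\end{equation*}
Should $\ind(A^*)$ be a finite nonzero integer, $n$ would have to divide it for every $n\ge 1$, which is absurd; therefore $\ind(A^*) \in \{0,-\infty\}$. Case~(ii) is characterized (via the proof of Theorem~\ref{t:alt}) by $\nul(A^* - \la) = 0$ generically together with $\ind(A^*) \neq 0$, so the divisibility conclusion immediately forces $\ind(A^*) = -\infty$ in case~(ii), proving the second assertion of the proposition.

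The main obstacle will be to exclude case~(i) outright: there $\nul(A^* - \la) \ge 1$ for every $\la \in \D$ and both values $\ind(A^*) \in \{0,-\infty\}$ are a priori compatible with Theorem~\ref{t:alt}. To handle case~(i), I would use that every $\la \in \D$ is then an eigenvalue of $A^*$ of infinite algebraic multiplicity, so the corresponding generalized eigenspaces are infinite-dimensional and, by commutation with $A^*$, invariant under the entire semigroup $(T_s)_{s\ge 0}$. For a normalized eigenvector $h$ of $A^*$ at $\la$, the computation $\sum_{i,n}|\<h,A^nf_i\>|^2 = \sum_i|\<h,f_i\>|^2/(1-|\la|^2)$ together with the lower frame bound yields $\<Fh,h\> \ge \alpha(1-|\la|^2)$. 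Combining this bound for an uncountable family of unit eigenvectors $\{h_\la : \la \in \D(0,\tfrac12)\}$ with the compactness of $F$ -- which forces unit vectors realizing uniformly positive values of the quadratic form $h\mapsto\<Fh,h\>$ to project nontrivially onto a fixed finite-dimensional subspace -- and with the linear independence of eigenvectors at distinct eigenvalues should produce the desired contradiction. Quantifying this tension between the rigidity coming from semigroup invariance of the eigenspaces and the finite-dimensional constraint imposed by compactness of $F$ is the delicate step that I expect to require the most care.
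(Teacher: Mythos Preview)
Your index-divisibility argument for $\ind(A^*)\in\{0,-\infty\}$ is correct and matches the paper's approach (the paper writes it via the factorization $A^*-\la^m = P(B_m^*,\la)(B_m^*-\la)$ with $B_m^* := T_{t_0/m}$, which in particular shows $B_m^*-\la$ is upper semi-Fredholm for \emph{every} $\la\in\D$, not just $\la=0$; but at $\la=0$ your Atkinson argument gives the same conclusion). The deduction that case {\bf (ii)} forces $\ind(A^*)=-\infty$ is fine.

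The genuine gap is your treatment of case {\bf (i)}. The argument you sketch---the bound $\<Fh_\la,h_\la\>\ge\alpha(1-|\la|^2)$ for unit eigenvectors, compactness of $F$, and linear independence of eigenvectors at distinct eigenvalues---does not use the semigroup hypothesis in any essential way, and therefore cannot succeed: the shift example $A=R^m$ (Example~\ref{ex:shift}) satisfies all of these ingredients (compact, even finite-rank, $F$; every $\la\in\D$ an eigenvalue of $A^*$; eigenvectors linearly independent), yet case {\bf (i)} holds there. An uncountable linearly independent family can perfectly well project uniformly nontrivially onto a one-dimensional subspace, as the normalized kernels $K_\la$ in $H^2(\D)$ illustrate. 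Your mention of ``semigroup invariance of the eigenspaces'' is correct but inert as stated; you never convert it into a constraint that the shift fails.

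The paper's route is different and cleaner. Having established that $B_n^*-\mu$ is upper semi-Fredholm for \emph{all} $\mu\in\D$, one applies the punctured-neighborhood dichotomy to $B_n^*$: either $\sigma_p(B_n^*)=\D$ or it is discrete in $\D$. Fix $\la\in\D\setminus\{0\}$ and take $n:=\nul(A^*-\la)+1$. If $\sigma_p(B_n^*)=\D$, then all $n$ distinct $n$-th roots $\la_0,\ldots,\la_{n-1}$ of $\la$ are eigenvalues of $B_n^*$; the corresponding eigenvectors are linearly independent and each is an eigenvector of $A^*=(B_n^*)^n$ at $\la$, giving $\nul(A^*-\la)\ge n$, a contradiction. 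Hence $\sigma_p(B_n^*)$ is discrete, and for generic $\la$ the factorization $A^*-\la=\prod_k(B_n^*-\la_k)$ is a product of injective operators, so $\nul(A^*-\la)=0$ generically---ruling out case {\bf (i)}. This is precisely where the semigroup factorization bites: it supplies arbitrarily many commuting roots of $A^*$ whose eigenvectors feed into a single eigenspace of $A^*$.
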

\begin{proof}
Let $B_m := T_{t_0/m}^*$, $m\in\N$, $m\ge 1$. Then we have $B_m^m = A$ for each $m$. Let $\la\in\D$ be arbitrary. Then $A^*-\la^m = (B_m^*)^m - \la^m = P(B_m^*,\la)(B_m^*-\la)$, where $P(B_m^*,\la)$ is a polynomial in $B_m^*$ and $\la$. This and Lemma \ref{l:komisch} imply that $B_m^* - \la$ is upper semi-Fredholm. Moreover, by the index formula \eqref{e:indexformel} we have that $\ind(A^*) = m\cdot\ind(B_m^*)$. In particular, $\ind(A^*)$ is divisible by each $m\in\N$, $m\ge 2$. Thus, $\ind(A^*)\in\{0,-\infty\}$. Note that $\ind(A^*) = +\infty$ is not possible since $\nul(A^*) < \infty$.

Now, let $\la\in\D\setminus\{0\}$, $\la = re^{it}$, $r\in (0,1)$, $t\in [0,2\pi)$, let $n = \nul(A^*-\la)+1$, and put $\la_k := \sqrt[n]{r}\exp(\frac{i}{n}(t+2k\pi))$, $k=0,\ldots,n-1$. Suppose that each $\la_k$ is an eigenvalue of $B_n^*$ with eigenvector $g_k$, $k=0,\ldots,n-1$. Then each $g_k$ is an eigenvector of $A^*$ with respect to $\la$ as $A^*g_k = (B_n^*)^ng_k = \la_k^ng_k = \la g_k$. But as the $g_k$ are linearly independent, this is a contradiction to the choice of $n$. Thus, the eigenvalues of $B_n^*$ do not fill the open unit disk. In turn, $\sigma_p(B_n^*)$ is discrete in $\D$ and hence the same holds for $\sigma_p(A^*)$.
\end{proof}

Note that we have not used any continuity properties of the semigroup in the proof above. In fact, if $(T_t)_{t\ge 0}$ is a strongly continuous semigroup, it can be shown that under the conditions of Proposition \ref{p:not(i)}, $\ker(T_t) = \{0\}$ for each $t\ge 0$, which in particular excludes case {\bf (i)}. We conclude this section with the following corollary, which directly follows from Proposition \ref{p:not(i)}, Theorem \ref{t:riesz_sequence}, and Theorem \ref{t:tu}.

\begin{cor}\label{c:neverriesz}
Let $A^* = T_{t_0}$, $t_0 > 0$, be an instance of a semigroup $(T_t)_{t\ge 0}$ of operators. Then, for any finite sequence $(f_i)_{i\in I}$ of vectors in $\calH$, the system $(A^nf_i)_{n\in\N,\,i\in I}$ is never a Riesz basis of $\calH$. Moreover, if $(A^nf_i)_{n\in\N,\,i\in I}$ is a frame for $\calH$ and $I$ is finite, then case {\bf\rm (iii)} holds and both $A$ and $A^*$ are strongly stable.
\end{cor}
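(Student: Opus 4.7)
The plan is to derive both assertions as direct consequences of Theorem \ref{t:tu}, Proposition \ref{p:not(i)}, and Theorem \ref{t:riesz_sequence}, with no new input beyond a similarity/strong-stability compatibility check.

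For the ``frame $\Rightarrow$ case {\bf (iii)} plus double strong stability'' part, I start by noting that when $I$ is finite, the frame operator $F=\sum_{i\in I}\langle\,\cdot\,,f_i\rangle f_i$ has rank at most $|I|$ and is in particular compact, so both Theorem \ref{t:tu} and Proposition \ref{p:not(i)} apply. Theorem \ref{t:tu} restricts the trichotomy of Theorem \ref{t:alt} to cases {\bf (i)} or {\bf (iii)}, while Proposition \ref{p:not(i)}, using the semigroup assumption $A^*=T_{t_0}$, restricts it to cases {\bf (ii)} or {\bf (iii)}. The only possibility consistent with both is case {\bf (iii)}. In that case, Theorem \ref{t:tu} directly yields that $A$ is strongly stable, and Lemma \ref{l:armenak} gives the strong stability of $A^*$ (which is automatic as soon as $\calA$ is a frame). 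So this half reduces to an inspection of which cases each of the cited results rules out.

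For the ``never a Riesz basis'' part, I would argue by contradiction: assume $\calA$ is a Riesz basis of $\calH$. A Riesz basis is a frame, so by the previous paragraph case {\bf (iii)} occurs and $A$ is strongly stable. On the other hand, Theorem \ref{t:riesz_sequence} supplies a boundedly invertible $V\in L(\ell^2(\N),\calH)$ with $A=VR^mV^{-1}$, where $m=|I|$. Since $R^m$ is an isometry on $\ell^2(\N)$, for any nonzero $g$ the sequence $\|R^{nm}g\|=\|g\|$ does not tend to $0$; hence $R^m$ is not strongly stable, and strong stability is invariant under similarity (because $A^n f = V R^{nm} V^{-1} f$), so $A$ cannot be strongly stable. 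This contradicts the conclusion of the first part, finishing the argument.

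I do not foresee a genuine obstacle here: once one has collected cases {\bf (i)}--{\bf (iii)} and the two restriction results, everything fits. The only point that needs a line of verification is the compactness hypothesis in Proposition \ref{p:not(i)}, which is immediate from finiteness of $I$; and the observation that similarity preserves strong stability, which is a two-line computation using $A^n=VB^nV^{-1}$.
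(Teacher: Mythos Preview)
Your proof is correct and takes essentially the same approach as the paper, which simply states that the corollary ``directly follows from Proposition \ref{p:not(i)}, Theorem \ref{t:riesz_sequence}, and Theorem \ref{t:tu}.'' You have filled in the details accurately: intersecting the case restrictions from Theorem \ref{t:tu} and Proposition \ref{p:not(i)} forces case {\bf (iii)}, and the Riesz-basis contradiction via strong stability of $A$ versus $A\sim R^m$ is a clean way to close the argument (one could equally well argue that $\ind(A^*)=m\neq 0$ contradicts case {\bf (iii)}, but your route is just as direct).
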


\vspace{.2cm}
\section{Dynamical Sampling in finite dimensions}\label{s:findim} 
In this section we let $\calH = \calH_d$ be a $d$-dimensional Hilbert space and consider the question for which linear operators $A\in L(\calH_d)$ and which sets of vectors $\{f_i : i\in I\}\subset\calH_d$ the system
\begin{equation}\label{e:calA_findim}
\calA := (A^nf_i)_{n\in N,\,i\in I}
\end{equation}
is a frame for $\calH_d$ (or, equivalently, complete in $\calH_d$). Here, we let
$$
N := \{0,\ldots,d-1\}
\qquad\text{and}\qquad
I := \{1,\ldots,m\},\;\;m\in\N\setminus\{0\}.
$$

The main result in this section is the following characterization theorem. Here   by $P_M$ we denote the orthogonal projection in $\calH_d$ onto the subspace $M\subset\calH_d$ and by  $\ds$ the direct sum of subspaces.

\begin{thm}\label{t:main_findim}
Let $A\in L(\calH_d)$, $f_1,\ldots,f_m\in\calH_d$, and set $\calF := \linspan\{f_1,\ldots,f_m\}$. Moreover, for each $\la\in\sigma(A)$ choose a subspace $V_\la$ such that $\calH_d = V_\la\ds\ran(A-\la)$ and denote the projection onto $V_\la$ with respect to this decomposition by $Q_{V_\la}$. Then the following statements are equivalent:
\begin{enumerate}
\item[{\rm (i)}]  The system $\calA$ in \eqref{e:calA_findim} is a frame for $\calH_d$.
\item[{\rm (ii)}] For each $\la\in\sigma(A)$ we have $Q_{V_\la}\calF = V_\la$.
\item[{\rm (iii)}] For each $\la\in\sigma(A^*)$ we have $P_{\ker(A^*-\la)}\calF = \ker(A^*-\la)$.
\end{enumerate}
\end{thm}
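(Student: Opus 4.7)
\medskip

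\noindent\textbf{Proof plan.} Since $\calH_d$ is finite-dimensional, $\calA$ is a frame if and only if it is complete, so (i) can be replaced by the condition $\linspan\calA = \calH_d$. By the Cayley--Hamilton theorem, every power $A^n$ with $n\ge d$ is a $\C$-linear combination of $A^0,A^1,\ldots,A^{d-1}$. Therefore
\[
\linspan\calA = \linspan\{A^nf_i:n\in\N,\,i\in I\} = \linspan\{p(A)f_i:p\in\C[z],\,i\in I\},
\]
which is the smallest $A$-invariant subspace of $\calH_d$ containing $\calF$. Hence $\calA$ is a frame iff its orthogonal complement is trivial, and the first step is to identify this complement cleanly.

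\medskip

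\noindent The computation $\<A^nf_i,h\>=\<f_i,(A^*)^nh\>$ shows that
\[
\calA^\perp = \{h\in\calH_d:(A^*)^nh\in\calF^\perp\text{ for all }n\in\N\},
\]
which is the largest $A^*$-invariant subspace contained in $\calF^\perp$. Thus $\calA$ is a frame iff $\calF^\perp$ contains no nonzero $A^*$-invariant subspace. In finite dimensions, any such nonzero invariant subspace must contain an eigenvector of $A^*$, so this happens iff
\begin{equation*}
\ker(A^*-\mu)\cap\calF^\perp = \{0\}\qquad\text{for every }\mu\in\sigma(A^*).
\end{equation*}

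\medskip

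\noindent The next step is a simple but crucial linear-algebra lemma: for any subspace $W\subset\calH_d$, one has $W\cap\calF^\perp=\{0\}$ if and only if $P_W\calF = W$. (Indeed, $h\in W\cap\calF^\perp$ satisfies $\<h,P_Wf\>=\<P_Wh,f\>=\<h,f\>=0$ for all $f\in\calF$, so $h\perp P_W\calF$; and if $P_W\calF\ne W$ there is a nonzero $w\in W$ with $w\perp P_W\calF$, which lies in $W\cap\calF^\perp$.) Applying this with $W=\ker(A^*-\mu)$ establishes the equivalence (i)$\Llra$(iii).

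\medskip

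\noindent It remains to prove (ii)$\Llra$(iii), which will complete the argument and incidentally show that condition (ii) does not depend on the particular choice of complement $V_\la$. By the definition of $Q_{V_\la}$, we have $Q_{V_\la}\calF = V_\la$ iff $\calF+\ran(A-\la) = \calH_d$, since $\calH_d = V_\la\ds\ran(A-\la)$. Taking orthogonal complements and using the finite-dimensional identity $\ran(A-\la)^\perp = \ker(A^*-\ol\la)$, this is equivalent to $\calF^\perp\cap\ker(A^*-\ol\la)=\{0\}$. Noting that $\sigma(A^*)=\{\ol\la:\la\in\sigma(A)\}$ and invoking the lemma of the previous paragraph once more yields (ii)$\Llra$(iii). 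The main (and only) subtlety is the translation between the two sides of the theorem via the annihilator identity, which is clean precisely because everything is finite-dimensional; no harder analytic machinery is needed.
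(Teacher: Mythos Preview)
Your proof is correct, and it takes a genuinely different route from the paper's argument. The paper proceeds cyclically, (i)$\Sra$(ii)$\Sra$(iii)$\Sra$(i): for (i)$\Sra$(ii) it introduces an auxiliary inner product making $V_\la$ and $\ran(A-\la)$ orthogonal; for (ii)$\Sra$(iii) it uses the projection identity $Q_{V_\la}P_{\ker(A^*-\ol\la)}=Q_{V_\la}$ and a dimension count; and for (iii)$\Sra$(i) it carries out an explicit construction with the minimal polynomial and the root--subspace decomposition \eqref{e:decomp}, manufacturing a polynomial $q$ that extracts from a hypothetical $f\in\calA^\perp\setminus\{0\}$ a nonzero eigenvector of $A^*$ lying in $\calF^\perp$.

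Your argument replaces all of this by two clean linear--algebra observations: first, that $\calA^\perp$ is the largest $A^*$--invariant subspace contained in $\calF^\perp$, so that completeness is equivalent to $\ker(A^*-\mu)\cap\calF^\perp=\{0\}$ for every $\mu\in\sigma(A^*)$ (the abstract fact that a nonzero invariant subspace contains an eigenvector does the work of the paper's explicit polynomial $q$); second, the annihilator identity $(\calF+\ran(A-\la))^\perp=\calF^\perp\cap\ker(A^*-\ol\la)$, which handles (ii)$\Llra$(iii) in one line and simultaneously makes transparent why (ii) is independent of the choice of complement $V_\la$. What you gain is brevity and conceptual clarity; what the paper's approach offers is an explicit recipe for locating the obstructing eigenvector when $\calA$ fails to be complete.
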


In the following proof we deal with root subspaces of linear operators. The {\em root subspace} of an operator $T\in L(\calH_d)$ at $\la\in\sigma(T)$ is defined by
$$
\calL_\la(T) := \bigcup_{n=0}^d \ker\left((T-\la)^n\right).
$$
It is obviously invariant under $T$. It is well known that if $\sigma(T) = \{\la_1,\ldots,\la_m\}$, then
\begin{equation}\label{e:decomp}
\calH_d = \calL_{\la_1}(T)\,\ds\,\ldots\,\ds\,\calL_{\la_m}(T).
\end{equation}

\begin{proof}[Proof of Theorem  \rmref{t:main_findim}]
(i)$\Sra$(ii). Let $\la\in\sigma(A)$ and define a scalar product $\hproduct$ on $\calH_d$ such that $V_\la$ and $\ran(A-\la)$ are $\hproduct$-orthogonal to each other. By $A^\star$ and $R_M$ we denote the adjoint of $A$ and the orthogonal projection onto a subspace $M\subset\calH_d$, respectively, both with respect to the inner product $\hproduct$. Then $V_\la = \ker(A^\star-\ol\la)$ and $Q_{V_\la} = R_{\ker(A^\star-\ol\la)}$. Now, let $f\in V_\la$ be such that $(f,Q_{V_\la}f_i) = 0$ for all $i\in I$. Then for each $i\in I$ and $n\in\{0,\ldots,d-1\}$ we have
$$
\left(f,A^{n}f_i\right) = \left(A^{\star n}f,f_i\right) = \ol\la^n\left(f,f_i\right) = \ol\la^n\left(f,Q_{V_\la}f_i\right) = 0.
$$
Hence, (i) implies $f = 0$.

(ii)$\Sra$(iii). Let $\la\in\sigma(A)$ be arbitrary. Then we have
$$
Q_{V_\la}P_{\ker(A^*-\ol\la)} = Q_{V_\la}(\Id - P_{\ran(A-\la)}) = Q_{V_\la} - Q_{V_\la}P_{\ran(A-\la)} = Q_{V_\la}.
$$
Therefore, if $Q_{V_\la}\calF = V_\la$, then $V_\la = Q_{V_\la}P_{\ker(A^*-\ol\la)}\calF$, which implies that the dimension of $P_{\ker(A^*-\ol\la)}\calF$ cannot be less than the dimension of $V_\la$. But $\dim V_\la = \dim\ker(A^*-\ol\la)$ and $P_{\ker(A^*-\ol\la)}\calF = \ker(A^*-\ol\la)$ follows.

(iii)$\Sra$(i). Towards a contradiction, suppose that there exists some $f\in\calH_d$, $f\neq 0$, such that $\<f,A^nf_i\> = 0$ for all $n=0,\ldots,d-1$ and all $i\in I$. In other words, we have that $\<q(A^*)f,f_i\> = 0$ for all $i\in I$ and each polynomial $q$ of degree at most $d-1$. By $p$ denote the minimal polynomial of $A^*$ and let $\la_1,\ldots,\la_M$ be the distinct eigenvalues of $A^*$. Then $p(\la) = (\la-\la_1)^{k_1}\dots(\la-\la_M)^{k_M}$ with some $k_j\in\N$, $j\in [M] := \{1,\ldots,M\}$. Clearly, we have $k_1+\dots+k_M\le d$. By \eqref{e:decomp} we can write $f = \sum_{j=1}^Mh_j$, where $h_j\in\calL_{\la_j}(A^*)$, $j\in [M]$. As $p(A^*) = 0$ and each $\calL_{\la_j}(A^*)$ is $A^*$-invariant, we have $(A^*-\la_j)^{k_j}h_j = 0$ for all $j\in [M]$. Since $f\neq 0$, there exists at least one $j$ for which $h_j\neq 0$ and we fix it for the rest of the proof. Let $\ell_j$ be the minimum of all $\ell\le k_j$ with $(A^*-\la_j I)^\ell h_j = 0$ and define the polynomial
$$
q(\la) := (\la-\la_j)^{\ell_j-1}\cdot\!\!\!\prod_{\ell\in [M]\setminus\{j\}}(\la-\la_\ell)^{k_\ell}.
$$
We obviously have $q(A^*)h_r = 0$ for $r\neq j$ and hence $q(A^*)f = q(A^*)h_j$. Now, let $g_j := (A^*-\la_j I)^{\ell_j-1}h_j$. Then $g_j\in \ker(A^*-\la_j)$, $g_j\neq 0$ (by the definition of $\ell_j$), and thus
$$
q(A^*)f = q(A^*)h_j = \prod_{\ell\in [M]\setminus\{j\}}(A^*-\la_\ell)^{k_\ell}g_j = c_jg_j,
$$
where $c_j = \prod_{\ell\in [M]\setminus\{j\}}(\la_j-\la_\ell)^{k_\ell}\neq 0$. Since $\deg(q)\le d-1$, we obtain for all $i=1,\ldots,m$,
$$
\<g_j,f_i\> = c_j^{-1}\<q(A^*)f,f_i\> = 0.
$$
But as $g_j\in\ker(A^*-\la_j)$ and $\{P_{\ker(A^*-\la_j)}f_i\}_{i=1}^m$ is complete in $\ker(A^*-\la_j)$ by (ii), it follows that $g_j = 0$, which is the desired contradiction.
\end{proof}

\begin{rem}
Note that in the proof of (ii)$\Sra$(iii) we actually proved that for any fixed subspace $W$ of $\calH_d$ and any pair $V,V'$ of subspaces complementary to $W$ the following holds: For each subspace $\calF$ of $\calH_d$ we have $Q_V\calF = V$ if and only if $Q_{V'}\calF = V'$.
\end{rem}

The first characterization for $\calA$ to be a frame for $\calH_d$ was proved in \cite{acmt}. To formulate it here, let us introduce the notion of subspaces of cyclic vectors. For this, let $\la\in\sigma(T)$, where $T\in L(\calH_d)$. A subspace $W_\la$ will be called a {\em subspace of cyclic vectors} for $T$ at $\la\in\sigma(T)$ if
\begin{equation}\label{e:soitis}
\calL_\la(T) = W_\la\ds(T-\la)\calL_\la(T).
\end{equation}
For such a subspace $W_\la$, we set $Q_{W_\la} := Q_\la P_\la$, where $P_\la$ is the projection onto $\calL_\la(T)$ with respect to the decomposition \ref{e:decomp} and $Q_\la$ is the projection in $\calL_\la(T)$ onto $W_\la$ with respect to \eqref{e:soitis}.

\begin{thm}[{\cite[Theorem 2.11]{acmt}}]\label{t:acmt}
Let $A\in L(\calH_d)$, $f_1,\ldots,f_m\in\calH_d$, and fix subspaces of cyclic vectors $W_\la$ for $A$, $\la\in\sigma(A)$. Then $\calA$ in \eqref{e:calA_findim} is a frame for $\calH_d$ if and only if for any $\la\in\sigma(A)$ we have $Q_{W_{\la}}\calF = W_{\la}$, where $\calF := \linspan\{f_1,\ldots,f_m\}$.
\end{thm}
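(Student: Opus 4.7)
The plan is to deduce Theorem \ref{t:acmt} as an essentially immediate corollary of the already-established Theorem \ref{t:main_findim}. The flexibility in Theorem \ref{t:main_findim} of choosing \emph{any} complementary subspace $V_\la$ of $\ran(A-\la)$ means it suffices to verify two things: (a) every subspace of cyclic vectors $W_\la$ is a valid choice, i.e.\ $\calH_d = W_\la\ds\ran(A-\la)$; and (b) the operator $Q_{W_\la} = Q_\la P_\la$ coincides with the projection onto $W_\la$ along $\ran(A-\la)$.

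For (a), I would fix $\la\in\sigma(A)$ and start from the root-space decomposition \eqref{e:decomp}. For any $\mu\in\sigma(A)\setminus\{\la\}$, the restriction $A|_{\calL_\mu(A)}$ has spectrum $\{\mu\}$, so $A-\la$ acts invertibly on $\calL_\mu(A)$ and hence $(A-\la)\calL_\mu(A) = \calL_\mu(A)$. Combining this with the obvious inclusion $(A-\la)\calL_\la(A)\subset\calL_\la(A)$ yields
\[
\ran(A-\la) \;=\; (A-\la)\calL_\la(A)\,\ds\,\bigoplus_{\mu\in\sigma(A)\setminus\{\la\}}\calL_\mu(A).
\]
Plugging in the defining identity $\calL_\la(A) = W_\la\ds(A-\la)\calL_\la(A)$ of a subspace of cyclic vectors and re-assembling according to \eqref{e:decomp}, one gets $\calH_d = W_\la\ds\ran(A-\la)$, as required.

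For (b), denote by $\widetilde{Q}$ the projection onto $W_\la$ along $\ran(A-\la)$, and check that $\widetilde{Q}$ and $Q_\la P_\la$ agree on every summand of the decomposition $\calH_d = W_\la\ds(A-\la)\calL_\la(A)\ds\bigoplus_{\mu\neq\la}\calL_\mu(A)$. On $W_\la$, both maps are the identity (since $W_\la\subset\calL_\la(A)$). On $(A-\la)\calL_\la(A)$, $P_\la$ is the identity while $Q_\la$ vanishes by its definition, and $\widetilde{Q}$ vanishes since $(A-\la)\calL_\la(A)\subset\ran(A-\la)$. On $\calL_\mu(A)$ with $\mu\neq\la$, $P_\la$ is zero and $\calL_\mu(A)\subset\ran(A-\la)$, so both maps vanish as well. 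Hence $Q_{W_\la} = \widetilde{Q}$.

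With (a) and (b) in hand, applying Theorem \ref{t:main_findim} with the specific choice $V_\la = W_\la$ gives $\calA$ is a frame for $\calH_d$ if and only if $Q_{W_\la}\calF = W_\la$ for every $\la\in\sigma(A)$, which is precisely the statement of Theorem \ref{t:acmt}. There is no substantial obstacle along this route; the only point requiring care is the observation that $A-\la$ is invertible on $\calL_\mu(A)$ for $\mu\neq\la$, which is what makes the subspaces of cyclic vectors at the chosen eigenvalue fit together with the other root spaces into a genuine complement of $\ran(A-\la)$.
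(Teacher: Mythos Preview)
Your proposal is correct and follows exactly the same route as the paper: the paper simply asserts that for each subspace $W_\la$ of cyclic vectors one has $\calH_d = W_\la\ds\ran(A-\la)$ and that $Q_{W_\la}$ coincides with the projection onto $W_\la$ along $\ran(A-\la)$, and then invokes Theorem~\ref{t:main_findim}. You have supplied the details the paper omits, but the strategy is identical.
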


Theorem \ref{t:acmt} is in fact a consequence of Theorem \ref{t:main_findim} because for each subspace $W_\la$ of cyclic vectors for $A$ at $\la$ we have $\calH_d = W_\la\ds\ran(A-\la)$ and $Q_{W_\la}$ actually is the projection onto $W_\la$ along $\ran(A-\la)$.

\vspace{.2cm}
\appendix

\vspace*{.2cm}
\section{Semi-Fredholm operators}\label{s:semi-fredholm}
An operator $T\in L(\calH)$ is said to be {\em upper semi-Fredholm}, if $\ker T$ is finite-dimensional and $\ran T$ is closed. The operator $T$ is called {\em lower semi-Fredholm}, if $\codim\ran T < \infty$ (in this case, the range of $T$ is automatically closed). $T$ is called {\em semi-Fredholm} if it is upper or lower semi-Fredholm and {\em Fredholm} if it is both upper and lower semi-Fredholm. In all cases, one defines the {\em nullity} and {\em deficiency} of $T$ by
$$
\nul(T) := \dim\ker T\qquad\text{and}\qquad\defi(T) := \codim\ran T.
$$
The {\em index} of $T$ is defined by
$$
\ind(T) := \nul(T) - \defi(T).
$$
This value might be a positive or negative integer or $\pm\infty$. It is, moreover, easily seen that $T$ is upper semi-Fredholm if and only if $T^*$ is lower semi-Fredholm. We have $\defi(T^*) = \nul(T)$ and $\nul(T^*) = \defi(T)$ and thus $\ind(T^*) = -\ind(T)$. The next theorem shows that the semi-Fredholm property of operators is stable under compact perturbations.

\begin{thm}[{\cite[Theorem IV.5.26]{k}}]\label{t:compi}
If $K\in L(\calH)$ is compact and $T\in L(\calH)$ is upper \braces{lower\,} semi-Fredholm, then $T + K$ is upper \braces{lower, respectively\,} semi-Fredholm with $\ind(T+K) = \ind(T)$.
\end{thm}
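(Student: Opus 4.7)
The plan is to split the argument into two independent stages: first prove that $T+K$ is semi-Fredholm of the same kind as $T$, then prove that the index is preserved by a continuity/homotopy argument. Throughout, I would only work out the upper semi-Fredholm case; the lower semi-Fredholm case follows immediately by passing to adjoints, since $K^*$ is compact, $(T+K)^*=T^*+K^*$, and an operator is lower semi-Fredholm iff its adjoint is upper semi-Fredholm (with $\nul$ and $\defi$ swapped, so indices negate).

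For Stage 1, I would first establish the sequential characterization: $T\in L(\calH)$ is upper semi-Fredholm if and only if every bounded sequence $(x_n)\subset\calH$ for which $(Tx_n)$ converges has a norm-convergent subsequence. The $(\Rightarrow)$ direction follows from the decomposition $\calH=\ker T\oplus(\ker T)^\perp$: on $(\ker T)^\perp$ the operator $T$ is injective with closed range, hence bounded below, so $\|P_{(\ker T)^\perp}x_n\|$ is Cauchy, and $P_{\ker T}x_n$ lives in a finite-dimensional space, so a subsequence converges by Bolzano--Weierstrass. The $(\Leftarrow)$ direction is equally standard: applying the property to sequences in $\ker T$ forces the unit ball of $\ker T$ to be relatively compact, hence $\dim\ker T<\infty$; and given $Tx_n\to y$, one may bounded-ly modify $x_n$ modulo $\ker T$, extract a convergent subsequence by hypothesis, and identify the limit's image as $y$, proving $\ran T$ closed. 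With this characterization in hand, the stability under compact $K$ is immediate: if $(x_n)$ is bounded and $(T+K)x_n$ converges, extract a subsequence along which $Kx_n$ converges (by compactness of $K$), whence $Tx_{n_k}=(T+K)x_{n_k}-Kx_{n_k}$ converges, and a further subsequence converges in norm.

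For Stage 2, I would consider the continuous path $T_t:=T+tK$, $t\in[0,1]$. Each $tK$ is compact, so by Stage 1 every $T_t$ is upper semi-Fredholm. I would then invoke the classical stability-under-small-perturbations lemma: the set of upper semi-Fredholm operators is open in $L(\calH)$, and the index $\ind\colon\{\text{upper semi-Fredholm operators}\}\to\Z\cup\{-\infty\}$ is locally constant. (The proof of this lemma is independent of compactness and uses a parametrix/generalized-inverse construction: write $T=A\oplus 0$ with $A$ invertible from $(\ker T)^\perp$ onto $\ran T$, choose a bounded extension, and estimate the nullity and deficiency of small perturbations by Neumann-series arguments.) Since $t\mapsto T_t$ is norm-continuous and $[0,1]$ is connected, $t\mapsto\ind(T_t)$ is constant, so $\ind(T+K)=\ind(T_1)=\ind(T_0)=\ind(T)$.

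The main obstacle is the local constancy of the index, since it is the genuinely nontrivial analytic input; the sequential characterization and its compact-perturbation corollary in Stage 1 are comparatively mechanical. A subtlety worth checking is the case $\ind(T)=-\infty$ (i.e., $T$ upper but not lower semi-Fredholm): here one must argue that $\defi(T_t)$ cannot drop from $\infty$ to a finite value under a small perturbation, which follows from the fact that any small perturbation of an operator with non-closed complement $\ran T$ of infinite codimension still has range of infinite codimension within a neighborhood of $T$. Once this is handled, both stages combine to give the theorem.
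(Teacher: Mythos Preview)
The paper does not actually give a proof of this theorem: it is stated as a citation of Kato's classical result \cite[Theorem IV.5.26]{k} and used as a black box in the appendix. So there is no ``paper's own proof'' to compare against.

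That said, your proposal is a correct outline of a standard textbook argument. The two-stage structure (sequential characterization of upper semi-Fredholm to get stability of the semi-Fredholm class, then local constancy of the index along the path $t\mapsto T+tK$) is exactly how this is done in many references. One minor comment: in the $(\Leftarrow)$ direction of your sequential characterization you write ``given $Tx_n\to y$, one may bounded-ly modify $x_n$ modulo $\ker T$''; to make this precise you should first note that the quotient map $\calH\to\calH/\ker T$ is open and that $T$ factors through it, so without loss of generality the $x_n$ lie in $(\ker T)^\perp$ and are bounded (or else a renormalized subsequence would contradict the hypothesis). Your handling of the $\ind(T)=-\infty$ case is a bit hand-wavy but the point is right: if $T+K$ were Fredholm, then $(T+K)-K=T$ would be Fredholm by the same result applied in reverse, a contradiction; this is cleaner than arguing directly about codimensions under small perturbations.
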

%

For a proof of the following lemma we refer the reader to \cite[Theorems III.16.5, III.16.6, and III.16.12]{m}.

\begin{lem}\label{l:komisch}
Let $S,T\in L(\calH)$. Then the following statements hold.
\begin{enumerate}
\item[{\rm (i)}]  If $ST$ is upper semi-Fredholm, then so is $T$.
\item[{\rm (ii)}] If $S$ and $T$ are upper semi-Fredholm, then so is $ST$ and
\begin{equation}\label{e:indexformel}
\ind(ST) = \ind(S) + \ind(T).
\end{equation}
\end{enumerate}
\end{lem}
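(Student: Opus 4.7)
The plan is to handle the two items separately, with the index formula in (ii) being the main challenge.

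For part (i), I would start from the obvious inclusion $\ker T\subset\ker(ST)$, which immediately gives $\nul(T)\le\nul(ST)<\infty$. To show that $\ran T$ is closed, I would exploit that $ST$ has closed range and finite-dimensional kernel: by the open mapping theorem the restriction $ST|_{(\ker ST)^\perp}$ is bounded below, so there exists $c>0$ with $\|STf\|\ge c\|f\|$ for $f\in(\ker ST)^\perp$. Combined with $\|STf\|\le\|S\|\|Tf\|$, this yields $\|Tf\|\ge(c/\|S\|)\|f\|$ on $(\ker ST)^\perp$ (the case $S=0$ is trivial since then $\calH=\ker ST$ is finite-dimensional). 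Hence $T|_{(\ker ST)^\perp}$ has closed range, and writing $\ran T = T(\ker ST)+T((\ker ST)^\perp)$ exhibits $\ran T$ as the sum of a finite-dimensional subspace and a closed subspace, therefore closed.

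For part (ii), I would prove the upper semi-Fredholm property of $ST$ first, then derive the index formula. For the kernel, I would use the exact sequence
\begin{equation*}
0\longrightarrow\ker T\longrightarrow\ker(ST)\xrightarrow{\;T\;}\ker S\cap\ran T\longrightarrow 0,
\end{equation*}
which gives $\nul(ST)=\nul(T)+\dim(\ker S\cap\ran T)<\infty$. For the closedness of $\ran(ST)$, let $P$ be the orthogonal projection onto $\ker S$ (finite rank, hence compact). Since $SP=0$, we have $ST=S(I-P)T$. By Theorem \ref{t:compi}, $(I-P)T$ is upper semi-Fredholm, so its range is closed and lies in $(\ker S)^\perp$. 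The restriction $S|_{(\ker S)^\perp}$ is injective with closed range, hence bounded below, and therefore maps the closed subspace $\ran((I-P)T)$ onto a closed subspace. This is $\ran(ST)$.

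The main obstacle — and the most delicate part — will be the index formula. My plan is to compute $\codim\ran(ST)$ by the same sort of bookkeeping. The surjection $S:\calH\to\ran S$ takes preimage $\ker S+\ran T$ of $S(\ran T)=\ran(ST)$, so
\begin{equation*}
\dim(\ran S/\ran(ST)) = \codim(\ker S+\ran T).
\end{equation*}
Splitting $(\ker S+\ran T)/\ran T \cong \ker S/(\ker S\cap\ran T)$ then gives
\begin{equation*}
\codim\ran(ST) = \defi(S)+\defi(T)-\nul(S)+\dim(\ker S\cap\ran T).
\end{equation*}
Subtracting this from the kernel formula collapses the term $\dim(\ker S\cap\ran T)$ and leaves
\begin{equation*}
\ind(ST) = \nul(T)-\defi(T)+\nul(S)-\defi(S) = \ind(S)+\ind(T),
\end{equation*}
as desired. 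I would finish by checking that the identity holds even when $\defi(S)$ or $\defi(T)$ is infinite: in that case the right-hand side is $-\infty$, and the formula for $\codim\ran(ST)$ (noting $\nul(S)<\infty$) shows $\defi(ST)=\infty$, so $\ind(ST)=-\infty$ as well.
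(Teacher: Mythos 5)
Your proof is correct, but there is no in-paper argument to compare it against: the paper does not prove Lemma \ref{l:komisch} at all, it simply refers the reader to Theorems III.16.5, III.16.6, and III.16.12 of M\"uller's book. Your self-contained argument is the standard one and every step checks out. In (i), the lower bound $\|Tf\|\ge (c/\|S\|)\|f\|$ on $(\ker ST)^\perp$ inherited from $ST$ gives closedness of $T((\ker ST)^\perp)$, and adding the finite-dimensional piece $T(\ker ST)$ preserves closedness; the degenerate case $S=0$ is correctly disposed of. In (ii), the exact sequence $0\to\ker T\to\ker(ST)\to\ker S\cap\ran T\to 0$ yields $\nul(ST)=\nul(T)+\dim(\ker S\cap\ran T)<\infty$; the factorization $ST=S(I-P)T$ with $P$ the finite-rank projection onto $\ker S$, combined with Theorem \ref{t:compi} and the fact that $S$ is bounded below on $(\ker S)^\perp$, gives closedness of $\ran(ST)$; and the identity $S^{-1}(\ran ST)=\ker S+\ran T$ leads to $\defi(ST)=\defi(S)+\defi(T)-\nul(S)+\dim(\ker S\cap\ran T)$, so the correction term $\dim(\ker S\cap\ran T)$ cancels against the kernel count and the index formula follows. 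Your treatment of the infinite-deficiency case is also sound, since $\nul(S)$ and $\dim(\ker S\cap\ran T)$ are finite and no $\infty-\infty$ ambiguity can arise. A minor stylistic remark: the appeal to Theorem \ref{t:compi} in the closed-range step can be avoided by observing that $\ker S+\ran T$ is closed (a closed subspace plus a finite-dimensional one) and contains $\ker S$, so its image under $S$ equals the image of the closed subspace $(\ker S+\ran T)\cap(\ker S)^\perp$ under the bounded-below map $S|_{(\ker S)^\perp}$; but your route is equally valid.
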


While Theorem \ref{t:compi} deals with compact perturbations, the next theorem (also known as the (extended) {\em punctured neighborhood theorem} (see \cite[Theorems 4.1 and 4.2]{ka})) is concerned with perturbations of the type $\la\Id$, where $|\la|$ is small.

\begin{thm}\label{t:pnt}
Let $T\in L(\calH)$ be upper semi-Fredholm and put
$$
k := \dim\left[\ker T/(\ker T\cap R_\infty(T))\right],
$$
where $R_\infty(T) := \bigcap_{n=0}^\infty\ran(T^n)$. Then there exists $\veps > 0$ such that for $0 < |\la| < \veps$ the following statements hold.
\begin{enumerate}
\item[{\rm (i)}]   $T-\la$ is upper semi-Fredholm.
\item[{\rm (ii)}]  $\ind(T-\la) = \ind(T)$.
\item[{\rm (iii)}] $\nul(T-\la) = \nul(T) - k$.
\item[{\rm (iv)}]  $\defi(T-\la) = \defi(T) - k$.
\end{enumerate}
\end{thm}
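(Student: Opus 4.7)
The plan is to prove Theorem \ref{t:pnt} by decomposing $T$ into a direct sum of a semi-regular upper semi-Fredholm operator and a finite-dimensional nilpotent operator via Kato's decomposition, and then perturbing each summand separately.

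First I would invoke Kato's decomposition theorem: since $T$ is upper semi-Fredholm, there exist closed $T$-invariant subspaces $M,N\subset\calH$ with $\calH = M\ds N$, $\dim N < \infty$, such that $T_N := T|_N$ is nilpotent and $T_M := T|_M$ is upper semi-Fredholm and \emph{semi-regular}, i.e., $\ran T_M$ is closed and $\ker T_M\subset R_\infty(T_M)$. Because both summands are $T$-invariant and $T_N$ is nilpotent on a finite-dimensional space, one has $R_\infty(T) = R_\infty(T_M)\subset M$, hence $\ker T\cap R_\infty(T) = \ker T_M$. Using the splitting $\ker T = \ker T_M\ds\ker T_N$, this identifies
$$
\ker T/(\ker T\cap R_\infty(T))\,\cong\,\ker T_N,
$$
so $k = \nul(T_N)$; and since $T_N$ is nilpotent on a finite-dimensional space, also $\defi(T_N) = \nul(T_N) = k$.

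Next I would perturb each summand. As $T_N$ is nilpotent and $\dim N < \infty$ we have $\sigma(T_N) = \{0\}$, so there is $\veps_1 > 0$ with $T_N - \la$ bijective on $N$ for $0 < |\la| < \veps_1$. For $T_M$ I would invoke the classical Kato stability theorem for semi-regular upper semi-Fredholm operators: there exists $\veps_2 > 0$ such that $\nul(T_M - \la) = \nul(T_M)$ and $\defi(T_M - \la) = \defi(T_M)$ hold for all $|\la| < \veps_2$. Its proof rests on upper semi-continuity of the nullity together with the local constancy of the index on the semi-Fredholm set (compare Theorem \ref{t:compi}), sharpened by a gap-metric argument on the families $\la\mapsto\ran(T_M - \la)$ and $\la\mapsto\ker(T_M - \la)$ that crucially uses the inclusion $\ker T_M\subset R_\infty(T_M)$ to prevent any jump in the nullity as $\la$ moves away from $0$.

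Finally, setting $\veps := \min(\veps_1,\veps_2)$ and writing $T - \la = (T_M - \la)\oplus(T_N - \la)$ for $0 < |\la| < \veps$ presents $T - \la$ as the direct sum of an upper semi-Fredholm operator and an invertible one; nullity, deficiency, and index of such a sum are additive across the summands. Combining this with $\nul(T) = \nul(T_M) + k$, $\defi(T) = \defi(T_M) + k$ (understood in the obvious way when $\defi(T_M) = \infty$), and $\ind(T) = \ind(T_M)$, all four assertions (i)--(iv) fall out simultaneously. The principal obstacle is supplying the two structural ingredients, namely the Kato decomposition and the constancy of $\nul$ and $\defi$ for a semi-regular upper semi-Fredholm operator in a neighborhood of $0$; both are classical but technical, and are why the result is best imported as a cited black box in the appendix rather than reproved from scratch.
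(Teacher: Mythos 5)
Your outline is sound, but be aware that the paper does not prove Theorem \ref{t:pnt} at all: it is stated in the appendix as an imported result with a citation to Kaashoek \cite{ka}, so there is no in-paper argument to measure yours against. What you propose is the standard modern proof via the Kato decomposition, and the individual steps check out: writing $T = T_M\oplus T_N$ with $T_M$ semi-regular upper semi-Fredholm and $T_N$ nilpotent on a finite-dimensional $N$, one indeed gets $R_\infty(T)=R_\infty(T_M)$ because $T_N^n=0$ for large $n$, hence $\ker T\cap R_\infty(T)=\ker T_M$ by semi-regularity, so $k=\nul(T_N)=\defi(T_N)$ by rank--nullity on $N$; the nilpotent summand becomes invertible for every $\la\neq 0$, the semi-regular summand has locally constant nullity and deficiency near $0$, and additivity of $\nul$, $\defi$, $\ind$ over the topological direct sum delivers (i)--(iv) at once, with the $\defi(T_M)=\infty$ case read as $\infty=\infty$. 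The comparison worth recording is methodological: Kaashoek's original stability theorems work directly with gap/opening estimates for the subspaces $\ker(T-\la)$ and $R_\infty(T)$ rather than through an explicit splitting, whereas your route isolates the entire ``jump'' of size $k$ inside a finite-dimensional nilpotent block at the cost of invoking the existence of the Kato decomposition for semi-Fredholm operators --- itself a nontrivial theorem. Both approaches ultimately rest on comparably heavy classical inputs, which is precisely why the paper treats the whole statement as a cited black box; your decision to do the same with the two structural ingredients is reasonable, but you should cite precise sources for them (e.g.\ M\"uller \cite{m} for the Kato decomposition and the semi-regular stability theorem) rather than leaving them as assertions.
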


\vspace*{.1cm}
\section{Harmonic Analysis in the unit disk}\label{a:sequences}
In this section of the Appendix we collect some results on complex sequences in the unit disk. Recall the definition of the evaluation operator $T_\Lambda$ in \eqref{e:eval1}--\eqref{e:eval2}.

\begin{lem}\label{l:dense}
If $\one\in D(T_\Lambda)$, then $\id^n\in D(T_\Lambda)$ for all $n\in\N$. In particular, $T_\Lambda$ is densely defined. If, in addition, $\la_i\neq\la_j$ for $i\neq j$, then $\ran T_\Lambda$ is dense in $\ell^2(\N)$.
\end{lem}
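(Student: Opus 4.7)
My plan is as follows. The first assertion is immediate: from $\one\in D(T_\Lambda)$ I have $\sum_{j\in\N}\veps_j^2 = \|T_\Lambda\one\|_{\ell^2}^2 < \infty$, and since $|\la_j|<1$ this gives $\sum_j\veps_j^2|\la_j|^{2n}\le\sum_j\veps_j^2<\infty$, so $\id^n\in D(T_\Lambda)$ for every $n\in\N$. Every polynomial therefore lies in $D(T_\Lambda)$, and since polynomials are dense in $H^2(\D)$, the operator $T_\Lambda$ is densely defined.

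For density of the range, I would argue by orthogonal complements in $\ell^2(\N)$. Let $c\in(\ran T_\Lambda)^\perp$ and set $a_j := \veps_j\ol{c_j}$; Cauchy--Schwarz applied to $(\veps_j)$ and $(\ol{c_j})$, both in $\ell^2(\N)$, yields $a\in\ell^1(\N)$. Orthogonality of $c$ against $T_\Lambda\id^n$ gives $\sum_{j}a_j\la_j^n = 0$ for every $n\in\N$, and by linearity $\sum_{j}a_j p(\la_j)=0$ for every polynomial $p$.

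The key technical step is to upgrade this moment-type identity from polynomials to all of $H^\infty(\D)$. For $\vphi\in H^\infty(\D)$ and $0<r<1$, the dilate $\vphi_r(z):=\vphi(rz)$ is holomorphic on a neighbourhood of $\ol\D$ and hence lies in the disc algebra $A(\D)$, where it is the uniform limit on $\ol\D$ of its Taylor polynomials; combined with $a\in\ell^1$ this yields $\sum_j a_j\vphi_r(\la_j)=0$. Letting $r\to 1^-$ and applying dominated convergence (with dominant $\|\vphi\|_\infty|a_j|$), I obtain
\[
\sum_{j}a_j\vphi(\la_j)=0\quad\text{for all }\vphi\in H^\infty(\D).
\]

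Finally, I would exploit Blaschke products. The hypothesis $\one\in D(T_\Lambda)$ gives $\sum_j(1-|\la_j|^2)<\infty$, which implies the Blaschke condition $\sum_j(1-|\la_j|)<\infty$. For each fixed $j_0\in\N$, form the Blaschke product $B_{j_0}\in H^\infty(\D)$ whose zero set is precisely $\{\la_j:j\neq j_0\}$; the distinctness assumption on $\Lambda$ ensures $B_{j_0}(\la_{j_0})\neq 0$. Substituting $\vphi=B_{j_0}$ into the displayed identity annihilates every term except the $j_0$-th, forcing $a_{j_0}B_{j_0}(\la_{j_0})=0$ and hence $c_{j_0}=0$; since $j_0$ was arbitrary, $c=0$. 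The step I expect to be the main obstacle is the polynomial-to-$H^\infty$ extension: the $\ell^1$ summability of $a$, obtained through Cauchy--Schwarz from the hypothesis $\one\in D(T_\Lambda)$, is exactly what makes the dilation plus dominated-convergence argument go through, and without it the Blaschke trick that isolates $a_{j_0}$ would be unavailable.
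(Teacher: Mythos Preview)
Your proof is correct but follows a different, more indirect route than the paper. The paper simply observes that for each $i\in\N$ the normalized Blaschke product $f_i:=(\veps_iB_i(\la_i))^{-1}B_i$ (with $B_i$ the Blaschke product over $(\la_j)_{j\neq i}$) lies in $H^\infty(\D)\subset H^2(\D)$, belongs to $D(T_\Lambda)$, and satisfies $T_\Lambda f_i=e_i$; thus every standard basis vector lies in $\ran T_\Lambda$, which is therefore dense. Your argument instead works dually: you take $c\in(\ran T_\Lambda)^\perp$, derive the moment conditions $\sum_j a_jp(\la_j)=0$ for all polynomials $p$, and then need an extra approximation step (dilation plus dominated convergence via $a\in\ell^1$) to upgrade this identity from polynomials to all of $H^\infty(\D)$ before the same Blaschke product can be inserted. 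Both proofs ultimately hinge on the identical Blaschke-product trick to isolate a single index; the paper's version is shorter because evaluating $T_\Lambda$ on $B_i$ directly yields $e_i$ without any limiting procedure, whereas your duality approach buys a template that would adapt to settings where one cannot so easily write down explicit preimages.
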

\begin{proof}
First, $\one\in D(T_\Lambda)$ means that $\Lambda$ is a Blaschke sequence, i.e., $(\veps_j)_{j\in\N}\in\ell^2(\N)$. Thus, for $n\in\N$ we have that $\sum_{j=0}^\infty\veps_j^2|\la_j|^{2n}\le\sum_{j=0}^\infty\veps_j^2 < \infty$. That is, $\id^n\in D(T_\Lambda)$. If $\la_i\neq\la_j$ for $i\neq j$, for fixed $i\in\N$ let $B_i$ be the Blaschke product of $(\la_j)_{j\neq i}$, i.e.,
$$
B_i(z) = z^k\prod_{j\neq i}\frac{\la_j-z}{1 - \ol\la_jz}\frac{|\la_j|}{\la_j}\,,
$$
where $k\in\{0,1\}$ and $k=0$ iff $\la_j\neq 0$ for all $j\neq i$. Set $f_i := (\veps_iB_i(\la_i))^{-1}B_i$, $i\in\N$. Then $f_i\in H^\infty(\D)\subset H^2(\D)$ (see, e.g., \cite[Theorem 15.21]{r}). Moreover, $f_i\in D(T_\Lambda)$ and $T_\Lambda f_i$ is the $i$-th standard basis vector of $\ell^2(\N)$. Hence, $\ran T_\Lambda$ is dense in $\ell^2(\N)$.
\end{proof}

The following theorem is due to Shapiro and Shields \cite{ss}.

\begin{thm}\label{t:interpolating}
For a sequence $\Lambda = (\la_k)_{k\in\N}\subset\D$ the following statements are equivalent.
\begin{enumerate}
\item[{\rm (i)}]   The evaluation operator $T_\Lambda$ is defined on $H^2(\D)$ and is onto.
\item[{\rm (ii)}]  The sequence $(K_{\la_j})_{j\in\N}$ is a Riesz sequence in $H^2(\D)$.
\item[{\rm (iii)}] $\Lambda$ is uniformly separated.
\end{enumerate}
\end{thm}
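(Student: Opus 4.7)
The plan is to prove the cycle (i) $\Leftrightarrow$ (ii) $\Rightarrow$ (iii) $\Rightarrow$ (i), with the first equivalence being a general Hilbert-space fact and the remaining two directions being the substance of the classical Shapiro-Shields result. First I would observe that $T_\Lambda$ is literally the analysis operator of the family $(K_{\la_j})_{j\in\N}$, since $\veps_j\vphi(\la_j) = \<\vphi,K_{\la_j}\>_{H^2(\D)}$ by \eqref{e:K}. With this identification, (i) $\Leftrightarrow$ (ii) is standard: analysis operators are closed on their natural domains, so by the closed graph theorem $T_\Lambda$ is everywhere defined iff it is bounded iff $(K_{\la_j})$ is Bessel; and a Bessel sequence is a Riesz sequence iff its synthesis operator $T_\Lambda^*$ is bounded below, iff $T_\Lambda$ is surjective.

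For (iii) $\Rightarrow$ (i) I would use the explicit Blaschke-product construction. For each $j\in\N$ set
$$
B_j(z) := \prod_{k\neq j}\frac{|\la_k|}{\la_k}\cdot\frac{\la_k-z}{1-\ol{\la_k}z},
$$
an inner function in $H^\infty(\D)$ vanishing on $\{\la_k : k\neq j\}$ with $|B_j(\la_j)| = \prod_{k\neq j}\varrho(\la_j,\la_k)\ge\delta>0$ by the uniform separation hypothesis. Given $a=(a_j)\in\ell^2(\N)$, I would define a candidate right inverse of $T_\Lambda$ by
$$
R a(z)\,:=\,\sum_{j}\frac{a_j\,\veps_j}{B_j(\la_j)\,(1-\ol{\la_j}z)}\,B_j(z),
$$
check coordinatewise that $\veps_k(Ra)(\la_k) = a_k$ using the vanishing pattern of $B_j$, and then bound $\|Ra\|_{H^2(\D)}^2\lesssim\|a\|_2^2$ by a Schur-test argument on the Gram matrix with entries $\veps_j\veps_k/(1-\ol{\la_j}\la_k)$; the identity \eqref{e:hyp_identity}, which gives $|\veps_j\veps_k/(1-\ol{\la_j}\la_k)|^2 = 1-\varrho(\la_j,\la_k)^2$, together with the uniform separation controls the off-diagonal decay and supplies the Schur bound. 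Surjectivity of $T_\Lambda$ follows.

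The main obstacle will be (ii) $\Rightarrow$ (iii), the duality heart of the theorem. Assuming $(K_{\la_j})$ is a Riesz sequence with lower bound $\alpha>0$, the dual Riesz sequence furnishes biorthogonal functions $h_j\in H^2(\D)$ with $\<h_j,K_{\la_k}\> = \delta_{jk}$ and $\sup_j\|h_j\|_{H^2(\D)}\le\alpha^{-1/2}=:M$. In the language of evaluations this means $h_j(\la_k)=0$ for $k\neq j$ and $h_j(\la_j)=\veps_j^{-1}$. I would then invoke inner-outer factorization to write $h_j = B_j\cdot g_j$ with $g_j\in H^2(\D)$; since $B_j$ is inner, $\|g_j\|_{H^2(\D)} = \|h_j\|_{H^2(\D)}\le M$. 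Evaluating at $\la_j$ gives $|g_j(\la_j)| = 1/(|B_j(\la_j)|\veps_j)$, whereas the normalized reproducing-kernel estimate yields $|g_j(\la_j)|\le\|g_j\|_{H^2(\D)}/\veps_j\le M/\veps_j$. Combining these bounds cancels $\veps_j$ and produces
$$
\prod_{k\neq j}\varrho(\la_j,\la_k) \,=\, |B_j(\la_j)| \,\ge\, M^{-1},
$$
uniformly in $j$, which is exactly the uniform separation of $\Lambda$. Pseudo-hyperbolic separation alone follows easily (test the Riesz lower bound on pairs), but the uniform product estimate genuinely requires the inner-outer step and the existence of a uniformly bounded biorthogonal dual, which is where the subtlety sits and where I would concentrate the bulk of the technical effort.
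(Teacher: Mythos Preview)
The paper does not actually prove this theorem: it attributes the full result to Shapiro and Shields \cite{ss} and only adds the one-line remark that (i)$\Leftrightarrow$(ii) follows because $T_\Lambda$ is the analysis operator of $(K_{\la_j})_{j\in\N}$, which is exactly your first paragraph. So there is nothing further to compare against --- you have supplied a proof where the paper supplies a citation.

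Your sketch is the classical Shapiro--Shields argument and is correct in outline. Two small points are worth tightening. First, in (ii)$\Rightarrow$(iii) you should explicitly note, before invoking the Blaschke products $B_j$, that $\Lambda$ is a Blaschke sequence: plug $\vphi=\one$ into the Bessel inequality to get $\sum_j\veps_j^2<\infty$, and observe that the $\la_j$ are pairwise distinct since $K_{\la_i}=K_{\la_j}$ would violate the Riesz property. With that in hand, your factorization $h_j=B_jg_j$ and the evaluation estimate go through cleanly. Second, in (iii)$\Rightarrow$(i) the Schur test on the Gram matrix does work, but not solely from the bound $|G_{jk}|^2=1-\varrho(\la_j,\la_k)^2$ and the product condition $\prod_{k\neq j}\varrho(\la_j,\la_k)\ge\delta$; that combination only yields $\sup_j\sum_k|G_{jk}|^2<\infty$. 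To get $\sup_j\sum_k|G_{jk}|<\infty$ you need to use that uniform separation implies ordinary separation, then count points in pseudo-hyperbolic annuli around each $\la_j$ to obtain a geometric sum. Alternatively (and this is closer to how Shapiro--Shields and Carleson actually argue), show that uniform separation forces $\sum_j\veps_j^2\delta_{\la_j}$ to be a Carleson measure, invoke Carleson embedding for the boundedness of $T_\Lambda$, and then your explicit right inverse $Ra$ can be bounded by duality against an arbitrary $g\in H^2(\D)$.
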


The equivalence of (i) and (ii) in Theorem \ref{t:interpolating} simply follows from the fact that $T_\Lambda$ is the analysis operator of the sequence $(K_{\la_j})_{j\in\N}$. The following two theorems can be found in \cite{ds}.

\begin{thm}\label{t:finite_union}
Let $\Lambda = (\la_j)_{j\in\N}\subset\D$. Then the following conditions are equivalent.
\begin{enumerate}
\item[{\rm (i)}]   $\Lambda$ is a finite union of uniformly separated sequences.
\item[{\rm (ii)}]  $D(T_\Lambda) = H^2(\D)$.
\end{enumerate}
\end{thm}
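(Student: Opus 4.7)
The plan is to prove the two implications separately. The direction (i)$\Rightarrow$(ii) is a short application of the Shapiro--Shields theorem (Theorem \ref{t:interpolating}), while (ii)$\Rightarrow$(i) is the substantive direction and requires combining a reproducing-kernel counting argument with a classical characterization of uniformly separated sequences.

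For (i)$\Rightarrow$(ii), write $\Lambda = \bigcup_{k=1}^{n}\Lambda_k$ with each $\Lambda_k$ uniformly separated. Theorem \ref{t:interpolating} gives that each evaluation operator $T_{\Lambda_k}$ is defined on all of $H^2(\D)$ and bounded. Since the contributions from different $\Lambda_k$ add orthogonally in the target space, a direct estimate yields
\begin{equation*}
\|T_\Lambda\vphi\|_2^2 \,=\, \sum_{k=1}^{n}\|T_{\Lambda_k}\vphi\|_2^2 \,\le\, \Big(\sum_{k=1}^{n}\|T_{\Lambda_k}\|^2\Big)\|\vphi\|_{H^2(\D)}^2
\end{equation*}
for every $\vphi\in H^2(\D)$, so that $D(T_\Lambda)=H^2(\D)$.

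For (ii)$\Rightarrow$(i), since $T_\Lambda$ is closed on its natural domain, the closed graph theorem gives a constant $M$ with $\|T_\Lambda\|\le M$. The decisive step is to test $T_\Lambda$ against the normalized reproducing kernels $K_{\la_i}$: using the formula \eqref{e:K} for $K_\la$ together with the pseudo-hyperbolic identity \eqref{e:hyp_identity}, one finds $\veps_j^2|K_{\la_i}(\la_j)|^2 = 1-\varrho(\la_i,\la_j)^2$, and therefore
\begin{equation*}
\sum_{j\in\N}\bigl(1-\varrho(\la_i,\la_j)^2\bigr) \,=\, \|T_\Lambda K_{\la_i}\|_2^2 \,\le\,M^2\qquad\text{for every } i\in\N.
\end{equation*}
Fixing any $r\in(0,1)$, this bounds the cardinality of $\Lambda\cap\BB_r(\la_i)$ by $M^2/(1-r^2)$ for each $i$; a simple triangle-inequality argument promotes this to a uniform bound on $|\Lambda\cap\BB_{r/2}(z)|$ for arbitrary centers $z\in\D$. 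A standard greedy partition argument (in the spirit of Lemma \ref{l:index}) then splits $\N$ into finitely many index sets $J_1,\ldots,J_n$ such that each subsequence $(\la_j)_{j\in J_k}$ is separated.

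It remains to upgrade each separated piece to a uniformly separated one. The restriction of the bounded operator $T_\Lambda$ to the coordinates indexed by $J_k$ is the evaluation operator associated with $(\la_j)_{j\in J_k}$, so its associated measure is Carleson for $H^2(\D)$. Invoking Theorem \ref{t:durenschuster} (a separated sequence whose associated measure is Carleson is uniformly separated) one concludes that each $(\la_j)_{j\in J_k}$ is uniformly separated, completing the argument. The main obstacle is precisely this final upgrade: everything preceding it reduces to the reproducing-kernel identity and a combinatorial covering lemma, whereas passing from a counting bound on pseudo-hyperbolic balls to the quantitative infinite-product condition of uniform separation requires the deeper Hardy-space machinery of Blaschke products and Carleson measures encoded in Theorem \ref{t:durenschuster}.
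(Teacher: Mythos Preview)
The paper does not actually prove Theorem~\ref{t:finite_union}; it is quoted from \cite{ds} together with Theorem~\ref{t:durenschuster}, so there is no in-paper argument to compare against. Your proposal is a correct outline of the standard proof and in fact mirrors the route taken in \cite{ds}: the forward direction is immediate from Theorem~\ref{t:interpolating}, and the reverse direction proceeds by testing $T_\Lambda$ on the normalized kernels $K_{\la_i}$ to get the counting bound $\sum_j(1-\varrho(\la_i,\la_j)^2)\le\|T_\Lambda\|^2$, splitting $\Lambda$ into finitely many separated pieces via Lemma~\ref{l:index}, and then upgrading each separated piece to a uniformly separated one by Theorem~\ref{t:durenschuster}.

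Two small remarks. First, your sentence ``its associated measure is Carleson for $H^2(\D)$'' is correct but slightly obscures that you are invoking exactly the hypothesis of Theorem~\ref{t:durenschuster} as stated here: the restriction of $T_\Lambda$ to the coordinates in $J_k$ is $T_{(\la_j)_{j\in J_k}}$, which is bounded on all of $H^2(\D)$, i.e.\ $D(T_{(\la_j)_{j\in J_k}})=H^2(\D)$; together with separation, Theorem~\ref{t:durenschuster} gives uniform separation directly, with no further appeal to Carleson-measure language needed. Second, be aware that Theorems~\ref{t:finite_union} and~\ref{t:durenschuster} both come from \cite{ds}, and the paper notes that the latter is extracted from the proof of the former there; so your argument is not an independent proof but rather a faithful reconstruction of the Duren--Schuster argument, resting on the nontrivial input encoded in Theorem~\ref{t:durenschuster}.
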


\begin{thm}\label{t:durenschuster}
Let $\Lambda = (\la_j)_{j\in\N}\subset\D$. Then the following statements are equivalent.
\begin{enumerate}
\item[{\rm (i)}]  $\Lambda$ is uniformly separated.
\item[{\rm (ii)}] $\Lambda$ is separated and $D(T_\Lambda) = H^2(\D)$.
\end{enumerate}
\end{thm}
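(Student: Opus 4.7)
My plan is to treat the two implications separately, the first being essentially bookkeeping and the second containing the real content.

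\emph{(i) $\Rightarrow$ (ii).} Uniform separation forces each individual pseudo-hyperbolic distance $\varrho(\la_i,\la_j)$, $i\ne j$, to be at least as large as the entire product $\prod_{i\ne j}\varrho(\la_i,\la_j)$ (all factors lie in $[0,1]$), hence bounded below uniformly; so $\Lambda$ is separated. That $D(T_\Lambda) = H^2(\D)$ under (i) is already part of Theorem \ref{t:interpolating}, which produces $T_\Lambda$ bounded on all of $H^2(\D)$ as the analysis operator of the Riesz sequence $(K_{\la_j})_{j\in\N}$.

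\emph{(ii) $\Rightarrow$ (i).} Here I would first use the closed graph theorem (as noted in the paper just before Theorem \ref{t:characterization}) to conclude from $D(T_\Lambda)=H^2(\D)$ that $T_\Lambda$ is in fact bounded from $H^2(\D)$ to $\ell^2(\N)$. Boundedness of $T_\Lambda$ amounts to the Carleson-type embedding
\begin{equation*}
\sum_{i\in\N}(1-|\la_i|^2)|\vphi(\la_i)|^2 \,\le\, \|T_\Lambda\|^2 \|\vphi\|_{H^2(\D)}^2, \qquad \vphi\in H^2(\D).
\end{equation*}
Testing this on the normalized reproducing kernel $\vphi = K_{\la_j}$ (which has $\|K_{\la_j}\|_{H^2(\D)}=1$) and applying identity \eqref{e:hyp_identity} yields
\begin{equation*}
\sum_{i\in\N}\bigl(1-\varrho(\la_i,\la_j)^2\bigr) \,\le\, \|T_\Lambda\|^2 \qquad\text{uniformly in }j.
\end{equation*}

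The separation hypothesis then supplies $\delta>0$ with $\varrho(\la_i,\la_j) \ge \delta$ for all $i\ne j$; on $[\delta,1)$ the continuous function $x\mapsto -\log x/(1-x^2)$ is bounded by some finite $C_\delta$, so
\begin{equation*}
\sum_{i\ne j}\log\frac{1}{\varrho(\la_i,\la_j)} \,\le\, C_\delta\|T_\Lambda\|^2 \qquad\text{uniformly in }j.
\end{equation*}
Exponentiating gives $\inf_j\prod_{i\ne j}\varrho(\la_i,\la_j)>0$, which is (i).

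The main obstacle is the passage from the analytic hypothesis---boundedness of $T_\Lambda$, equivalently, the Carleson measure property of $\sum_i(1-|\la_i|^2)\delta_{\la_i}$---to the multiplicative Blaschke-product lower bound defining uniform separation. The bridge is provided by testing on reproducing kernels, which converts the Carleson inequality into summability of $1-\varrho^2$, and by the separation hypothesis, which supplies the uniform cutoff needed to dominate $-\log\varrho$ by $1-\varrho^2$. Both halves of hypothesis (ii) are used exactly once; dropping either breaks the chain.
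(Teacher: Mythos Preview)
The paper does not prove Theorem~\ref{t:durenschuster}; it merely points to \cite{ds}, remarking that the statement is ``hidden in the proof of the implication (iii)$\Sra$(i) of the main theorem'' there. Your argument, by contrast, is a complete and correct self-contained proof.

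For (i)$\Rightarrow$(ii) you combine the trivial observation that each factor in the Blaschke-type product dominates the entire product (giving separation) with Theorem~\ref{t:interpolating} (giving $D(T_\Lambda)=H^2(\D)$). For (ii)$\Rightarrow$(i) you test the Carleson embedding on the normalized kernels $K_{\la_j}$, use \eqref{e:hyp_identity} to rewrite each summand as $1-\varrho(\la_i,\la_j)^2$, and then invoke the elementary bound $-\log x\le C_\delta(1-x^2)$ on $[\delta,1)$ to pass from summability of $1-\varrho^2$ to summability of $-\log\varrho$, hence to the uniform product lower bound. This is the direct reproducing-kernel route to the ``separated plus Carleson measure implies interpolating'' half of Carleson's theorem; it avoids any geometric Carleson-box argument. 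One small point worth making explicit: the ratio $-\log x/(1-x^2)$ extends continuously to $x=1$ with value $\tfrac12$, so its boundedness on $[\delta,1)$ follows from continuity on the compact interval $[\delta,1]$.
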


Note that Theorem \ref{t:durenschuster} is not formulated as a theorem in \cite{ds}, but is hidden in the proof of the implication (iii)$\Sra$(i) of the main theorem. It immediately implies the next corollary.

\begin{cor}\label{c:finunifsep}
Let $(\la_j)_{j\in\N}$ be a sequence in $\D$ such that $\la_j\neq\la_k$ for $j\neq k$. If $(\la_j)_{j\ge n}$ is uniformly separated, then also $(\la_j)_{j\in\N}$ is uniformly separated.
\end{cor}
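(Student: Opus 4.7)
The plan is to apply Theorem~\ref{t:durenschuster}, which characterizes uniformly separated sequences in $\D$ as those that are separated and satisfy $D(T_\Lambda) = H^2(\D)$. So the task reduces to verifying these two properties for the full sequence $\Lambda = (\la_j)_{j\in\N}$, given the hypothesis on the tail $\Lambda' := (\la_j)_{j\ge n}$.

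The domain equality is the easy half. Since $\Lambda'$ is uniformly separated, Theorem~\ref{t:durenschuster} (applied to $\Lambda'$) gives $D(T_{\Lambda'}) = H^2(\D)$. For any $\vphi\in H^2(\D)$ the decomposition
$$
\sum_{j=0}^\infty\veps_j^2|\vphi(\la_j)|^2 = \sum_{j=0}^{n-1}\veps_j^2|\vphi(\la_j)|^2 + \sum_{j\ge n}\veps_j^2|\vphi(\la_j)|^2
$$
is a sum of a finite collection of finite terms (pointwise evaluation at each $\la_j\in\D$ is well defined for $\vphi\in H^2(\D)$) and a convergent tail, hence $D(T_\Lambda) = H^2(\D)$.

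The real work is showing that $\Lambda$ is separated. Pairs $(j,k)$ with $j,k\ge n$ are controlled by the tail hypothesis, and the finitely many pairs with $j,k<n$ contribute a strictly positive minimum because the $\la_j$ are pairwise distinct. The only potentially dangerous pairs are the cross pairs $j<n\le k$, which I plan to control via local comparison of the two metrics on compact subsets of $\D$. Concretely, fix $j<n$ and suppose for contradiction that some subsequence $(\la_{k_p})_{p\in\N}$ from the tail satisfies $\varrho(\la_j,\la_{k_p})\to 0$. From the identity \eqref{e:hyp_identity} and $|1-\la_j\ol{\la_{k_p}}|\le 2$, one reads off $|\la_j-\la_{k_p}|\le 2\varrho(\la_j,\la_{k_p})\to 0$, so $\la_{k_p}\to\la_j$ in Euclidean distance. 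Pick $r<1$ with $|\la_j|<r$; then $|\la_{k_p}|\le r'$ for some $r'<1$ and all $p$ large, and on the disk $\{|z|\le r'\}$ we have $|1-\la_{k_p}\ol{\la_{k_q}}|\ge 1-(r')^2$. Hence
$$
\varrho(\la_{k_p},\la_{k_q}) \le \frac{|\la_{k_p}-\la_{k_q}|}{1-(r')^2} \longrightarrow 0
$$
as $p,q\to\infty$, contradicting the separation of $\Lambda'$. Therefore $\inf_{k\ge n}\varrho(\la_j,\la_k)>0$ for each fixed $j<n$. Taking the minimum over the finitely many $j<n$ together with the tail separation constant and the minimum among the first $n$ pairwise distinct points yields $\inf_{j\neq k}\varrho(\la_j,\la_k)>0$.

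The main obstacle is this cross-pair separation argument; once it is in place, both hypotheses of Theorem~\ref{t:durenschuster} are verified for $\Lambda$, and the corollary follows immediately.
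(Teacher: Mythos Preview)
Your proof is correct and follows exactly the approach the paper intends: the corollary is stated as an immediate consequence of Theorem~\ref{t:durenschuster}, and you supply precisely the two ingredients (separation and $D(T_\Lambda)=H^2(\D)$) that theorem requires. Your cross-pair argument is valid but heavier than necessary---since $\varrho$ is a metric on $\D$, the triangle inequality alone gives $\varrho(\la_{k_p},\la_{k_q})\le\varrho(\la_{k_p},\la_j)+\varrho(\la_j,\la_{k_q})\to 0$, which already contradicts separation of the tail without passing through the Euclidean comparison.
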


\begin{lem}\label{l:index}
Let $\Lambda = (\la_j)_{j\in\N}$ be a sequence in $\D$. For $r\in (0,1)$ and $z\in\D$ define
$$
J(r,z) := \{j\in\N : \la_j\in\BB_r(z)\}
\qquad\text{and}\qquad
m_r := \sup_{z\in\D}\left|J(r,z)\right|,
$$
and assume $m_{r_0} < \infty$ for some $r_0\in (0,1)$. Then $\Lambda$ is a union of $m_{r_0}$ separated sequences \braces{or less\,} and
\begin{equation}\label{e:index}
\ind(\Lambda) = \inf\{m_r : r\in (0,r_0]\}.
\end{equation}
\end{lem}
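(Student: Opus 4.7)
The plan is to deduce both assertions from a single greedy-coloring argument. For the partition statement, I view $\N$ as the vertex set of a graph $G$ in which two distinct indices $j,k$ are adjacent exactly when $\varrho(\la_j,\la_k)<r_0$. For fixed $j\in\N$, its neighbors correspond to the indices $k\neq j$ with $\la_k\in\BB_{r_0}(\la_j)$, so the degree of $j$ in $G$ is $|J(r_0,\la_j)|-1\le m_{r_0}-1$. Enumerating $\N$ as $0,1,2,\dots$ and greedily assigning to each $j$ a color from the palette $\{1,\dots,m_{r_0}\}$ not used by its (at most $m_{r_0}-1$) already-colored neighbors therefore produces a proper coloring. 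Each color class $J_k$ is pairwise nonadjacent in $G$, i.e.\ $\varrho(\la_j,\la_{j'})\ge r_0$ for distinct $j,j'\in J_k$, so $(\la_j)_{j\in J_k}$ is separated with separation constant at least $r_0$. This proves the partition assertion and, as a byproduct, $\ind(\Lambda)\le m_{r_0}<\infty$.

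For the identity \eqref{e:index} I establish two inequalities. Since $\BB_r(z)\subset\BB_{r_0}(z)$ whenever $r\le r_0$, the hypothesis gives $m_r\le m_{r_0}<\infty$ for every $r\in(0,r_0]$; applying the coloring argument with $r$ in place of $r_0$ yields $\ind(\Lambda)\le m_r$, and taking the infimum produces $\ind(\Lambda)\le\inf\{m_r:r\in(0,r_0]\}$. For the reverse inequality, set $n:=\ind(\Lambda)$ and fix a partition $\N=J_1\cup\dots\cup J_n$ with each $(\la_j)_{j\in J_k}$ separated by some constant $\delta_k>0$; put $\delta:=\min_k\delta_k>0$. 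The triangle inequality for the metric $\varrho$ shows that any ball $\BB_r(z)$ with $r<\delta/2$ contains at most one index from each $J_k$, so $|J(r,z)|\le n$ uniformly in $z$. Choosing $r:=\min\{r_0,\delta/3\}$ then gives $m_r\le n$ and completes the proof.

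The principal obstacle I anticipate is purely conceptual: recognizing that assembling separated subsequences is exactly a graph-coloring problem, with the conflict graph $G$ defined above. Once this observation is in hand, the only technicality is running the greedy algorithm on a countable vertex set, but the uniform degree bound $m_{r_0}-1$ makes the fixed finite palette $\{1,\dots,m_{r_0}\}$ suffice at every step, so no transfinite argument is required and the rest of the proof reduces to elementary manipulations with the metric $\varrho$.
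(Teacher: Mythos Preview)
Your proof is correct. The identity \eqref{e:index} is handled essentially as in the paper: both arguments exploit the monotonicity of $r\mapsto m_r$ for the upper bound on $\ind(\Lambda)$ and the triangle inequality for $\varrho$ (showing that a small ball meets each separated piece at most once) for the lower bound.

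The partition statement, however, is established by a genuinely different route. The paper constructs the sets $J_1,\dots,J_{M-1}$ one at a time by a greedy sweep through $\N$, then argues by a counting trick that the residual set $J_M$ is automatically separated: for $j\in J_M$ the ball $\BB_{r_0}(\la_j)$ already contains one representative $i_k$ from each $J_k$, $k<M$, and since $|J(r_0,\la_j)|\le M$ there is no room for a second index from $J_M$. Your argument replaces this construction by a single observation: the conflict graph on $\N$ has maximum degree at most $m_{r_0}-1$, so the sequential greedy coloring succeeds with $m_{r_0}$ colors. This is shorter and more transparent, and it yields the same separation constant $r_0$ for every color class; the paper's approach, by contrast, makes the structure of the partition more explicit (each $J_k$ with $k<M$ is maximal in a precise sense), which is not needed here but could be useful in other contexts.
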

\begin{proof}
For $J\subset\N$ define $\Lambda_J := (\la_j)_{j\in J}$ and set $M := m_{r_0}$. We will define subsets $J_1,\ldots,J_m$, $m\le M$, recursively such that $\N = \bigcup_{k=1}^m J_k$ and each $\Lambda_{J_k}$ is separated. For the definition of $J_1$, set $j_0 := 0$ and once $j_0,\ldots,j_s$ are chosen, we pick
$$
j_{s+1} := \min\left\{j > j_s : \lambda_j\notin\bigcup_{i=1}^{s}\BB_r(\lambda_{j_i})\right\}.
$$
Note that $j_{s+1}$ is well defined due to the assumption that $M < \infty$. In other words, $\lambda_{j_{s+1}}$ is the first element of the sequence $\Lambda$ that does not belong to any of the balls of radius $r$ around the previously chosen elements. Put $J_1 := \{j_s : s\in\N\}$. It is clear that $\Lambda_{J_1}$ is separated (by $r$) and that all the $\lambda_j$ that have not been chosen due to this process belong to some $\BB_r(\lambda_{j_s})$.

If $\N\setminus J_1$ is finite, we are finished. If not, proceed as before with $\N\setminus J_1$ instead of $\N$ to find an infinite set $J_2\subset\N\setminus J_1$ such that $\Lambda_{J_2}$ is separated (by $r$). Continuing in this way, the process either terminates after $m < M-1$ steps (in which case we are done) or we obtain $M-1$ separated sequences $\Lambda_{J_1},\ldots,\Lambda_{J_{M-1}}$. In this case, put
$$
J_{M} := \N\setminus\bigcup_{k=1}^{M-1}J_k.
$$
Let us prove that $\Lambda_{J_{M}}$ is separated. For this, let $j\in J_{M}$ be arbitrary. Then, as a result of the construction process, $\la_j\in\bigcup_{k=1}^{M-1}\BB_r(\la_{i_k})$, where $i_k\in J_k$, $k=1,\ldots,M-1$. Thus, we have that $j,i_1,\ldots,i_{M-1}\in J(r,\la_j)$ such that $J(r,\la_j) = \{j,i_1,\ldots,i_{M-1}\}$. Therefore, $\varrho(\la_j,\la_l)\ge r$ for all $l\in J_{M}$, $l\neq j$. Hence, $\Lambda_{J_{M}}$ is indeed separated.

It remains to prove the relation \eqref{e:index}. For this, put $m_0 := \inf\{m_r : r\in (0,r_0]\}$. It is clear that $m_0 = m_{r_1}$ for some $r_1\le r_0$ (note that $r\mapsto m_r$ is non-decreasing) and that, therefore, $n := \ind(\Lambda)\le m_0$. There exist $J_1,\ldots,J_n\subset\N$ with $\N = \bigcup_{k=1}^nJ_k$ such that $\Lambda_{J_k}$ is separated for each $k=1,\ldots,n$. Without loss of generality, let each $\Lambda_{J_k}$ be separated by $r_1$. From $m_0 = m_{r_1/2}$ we conclude that there exists some $z\in\D$ such that $|J(r_1/2,z)| = m_0$. Suppose that $n < m_0$. Then there exists some $J_k$ that contains at least two of the $m_0$ elements of $J(r_1/2,z)$, say, $j_1$ and $j_2$. But then $\varrho(\la_{j_1},\la_{j_2})\le\varrho(\la_{j_1},z) + \varrho(z,\la_{j_2}) < r_1$, contradicting the fact that $\Lambda_{J_k}$ is separated by $r_1$.
\end{proof}

We shall also make use of the following lemma which in particular shows that the map $(\D,\varrho)\to H^2(\D)$, $z\mapsto K_z$, is Lipschitz continuous. Here, $K$ is the normalized reproducing kernel of $H^2(\D)$ defined in \eqref{e:K}.

\begin{lem}\label{l:lipschitz}
For $z\in\D$ put $s_z = \sqrt{1-|z|^2}$. Then for $z,w\in\D$ the following relation holds:
$$
\|K_z - K_w\|_{H^2(\D)}^2 = (2 - s_zs_w)\varrho(z,w)^2 - \left(1 - \varrho(z,w)^2\right)\frac{(s_z-s_w)^2}{s_zs_w}.
$$
In particular,
$$
\|K_z - K_w\|_{H^2(\D)}\,\le\,\sqrt 2\,\varrho(z,w).
$$
\end{lem}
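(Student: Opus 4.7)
The plan is to compute $\|K_z - K_w\|_{H^2(\D)}^2$ by direct expansion of the inner product and then rewrite the result in the prescribed form, using the basic pseudo-hyperbolic identity \eqref{e:hyp_identity}. Throughout I write $\rho = \varrho(z,w)$ for brevity.

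First, since each $K_\la$ is a unit vector in $H^2(\D)$, one has $\|K_z - K_w\|^2 = 2 - 2\Re\<K_z,K_w\>$. Using the explicit formula \eqref{e:K}, the reproducing property yields $\<K_z,K_w\> = s_z s_w/(1 - \bar w z)$, and multiplying numerator and denominator by $\overline{1 - \bar w z}$ gives
\begin{equation*}
\|K_z - K_w\|^2 = 2 - \frac{2 s_z s_w\,(1 - \Re(z\bar w))}{|1 - z\bar w|^2}.
\end{equation*}
Next I would invoke two elementary identities: the identity $2(1-\Re(z\bar w)) = s_z^2 + s_w^2 + |z-w|^2$, obtained by expanding $|z-w|^2 = |z|^2 - 2\Re(z\bar w) + |w|^2$; and the identity \eqref{e:hyp_identity}, which can be rewritten as $s_z^2 s_w^2 = (1 - \rho^2)|1 - z\bar w|^2$ and $|z-w|^2 = \rho^2 |1-z\bar w|^2$.

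Now I would split $s_z^2 + s_w^2 = (s_z - s_w)^2 + 2 s_z s_w$, which is the only mildly clever step, and substitute back. The term coming from $|z-w|^2$ produces exactly $-s_z s_w\rho^2$; the term coming from $2 s_z s_w$ produces $-2(1-\rho^2)$; and the term coming from $(s_z - s_w)^2$ produces $-(1-\rho^2)(s_z-s_w)^2/(s_z s_w)$. Collecting everything gives
\begin{equation*}
\|K_z - K_w\|^2 = 2 - s_z s_w \rho^2 - 2(1-\rho^2) - (1-\rho^2)\frac{(s_z - s_w)^2}{s_z s_w} = (2 - s_z s_w)\rho^2 - (1-\rho^2)\frac{(s_z-s_w)^2}{s_z s_w},
\end{equation*}
which is the claimed identity.

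The inequality $\|K_z - K_w\| \le \sqrt 2\,\rho$ then follows immediately: for $z, w \in \D$ we have $s_z, s_w > 0$, so the second term on the right-hand side is non-negative and can be dropped, leaving $\|K_z - K_w\|^2 \le (2 - s_z s_w)\rho^2 \le 2\rho^2$. I do not anticipate a serious obstacle in this proof; the only bookkeeping point worth flagging is the decomposition $s_z^2 + s_w^2 = (s_z - s_w)^2 + 2 s_z s_w$, which is precisely what isolates the $(s_z - s_w)^2$ correction term distinguishing the exact identity from the cruder Lipschitz bound.
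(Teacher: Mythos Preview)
Your proof is correct and follows essentially the same route as the paper's: both start from $\|K_z-K_w\|^2 = 2 - 2\Re\langle K_z,K_w\rangle$, express the real part via $|1-z\bar w|^2$, and then use the identities $2(1-\Re(z\bar w)) = s_z^2+s_w^2+|z-w|^2$ and $s_z^2s_w^2 = (1-\rho^2)|1-z\bar w|^2$ together with the split $s_z^2+s_w^2 = (s_z-s_w)^2 + 2s_zs_w$. The only cosmetic difference is the order in which the $2(1-\rho^2)$ term is separated out. (One tiny remark: the reproducing property gives $\langle K_z,K_w\rangle = s_zs_w/(1-\bar z w)$ rather than $s_zs_w/(1-\bar w z)$, but since only the real part and the modulus of the denominator enter, this does not affect anything.)
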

\begin{proof}
Using $1-\varrho(z,w)^2 = s_z^2s_w^2/|1-\ol zw|^2$ (see \eqref{e:hyp_identity}), we see that
\begin{align*}
\|K_z - K_w\|_{H^2(\D)}^2
&= 2 - 2\Re\<K_z,K_w\> = 2 - 2\Re\frac{s_zs_w}{1 - \ol zw}\\
&= 2 - \,2\frac{s_zs_w}{|1-\ol zw|^2}(1 - \Re(\ol zw))\\
&= 2 - \,\frac{s_zs_w}{|1-\ol zw|^2}\left(2 - 2\Re(\ol zw) - 2s_zs_w\right) - 2\left(1-\varrho(z,w)^2\right)\\
&= 2\varrho(z,w)^2 - \,\frac{s_zs_w}{|1-\ol zw|^2}\left(2 + |z-w|^2 - |z|^2 - |w|^2 - 2s_zs_w\right)\\
&= 2\varrho(z,w)^2 - \,\frac{s_zs_w}{|1-\ol zw|^2}\left((s_z-s_w)^2 + |z-w|^2\right),
\end{align*}
which proves the claim.
\end{proof}

\vspace*{.5cm}\noindent
{\bf Acknowledgements.} The authors would like to thank D. Su\'arez for sharing his knowledge on sequences in the unit disk.



\begin{thebibliography}{99}
\bibitem{aadp}
A. Aldroubi, R. Aceska, J. Davis, and A. Petrosyan,
Dynamical sampling in shift-invariant spaces,
Contemp. Math. of the AMS 603 (2013), 139--148.

\bibitem{accmp}
A. Aldroubi, C. Cabrelli, A.F. \c{C}akmak, U. Molter, and A. Petrosyan,
Iterative actions of normal operators,
J. Funct. Anal. 272 (2017), 1121--1146.

\bibitem{acmt}
A. Aldroubi, C. Cabrelli, U. Molter, and S. Tang,
Dynamical Sampling,
Appl. Comput. Harmon. Anal. 42 (2017), 378--401.

\bibitem{adk}
A. Aldroubi, J. Davis, and I. Krishtal,
Dynamical Sampling: Time Space Trade-off,
Appl. Comput. Harmon. Anal. 34 (2013), 495--503.

\bibitem{ap}
A. Aldroubi and A. Petrosyan (2017) 
Dynamical Sampling and Systems from Iterative Actions of Operators. 
In: Pesenson I., Le Gia Q., Mayeli A., Mhaskar H., Zhou DX. (eds) Frames and Other Bases in Abstract and Function Spaces. Applied and Numerical Harmonic Analysis. BirkhŠuser, Cham

\bibitem{c}
O. Christensen,
An introduction to frames and Riesz bases,
Birkh\"auser, Boston, Basel, Berlin, 2003.

\bibitem{con}
J.B. Conway,
A course in functional analysis, 2nd edition,
Springer, New York, Berlin, Heidelberg, 1990.

\bibitem{dp}
R. Duong and F. Philipp,
The effect of perturbations of linear operators on their polar decomposition,
Proc. Amer. Math. Soc. 145 (2017), 779--790.

\bibitem{ds}
P. Duren and A.P. Schuster,
Finite unions of interpolating sequences,
Proc. Amer. Math. Soc. 130 (2002), 2609--2615.

\bibitem{ka}
M.A. Kaashoek,
Stability theorems for closed linear operators,
Indag. Math. 27 (1965), 452--466.

\bibitem{k}
T. Kato,
Perturbation theory for linear operators,
2nd Edition, Springer, Berlin, Heidelberg, New York, 1980.

\bibitem{lv}
Y. Lu and M. Vetterli,
Spatial super-resolution of a diffusion field by temporal oversampling in sensor networks,
In: Acoustics, Speech and Signal Processing, 2009. IEEE International Conference on
ICASSP 2009, April 2009, pp. 2249--2252.

\bibitem{m}
V. M\"uller,
Spectral theory of linear operators and spectral systems in Banach algebras,
2nd ed., Birkh\"auser, Basel, Boston, Berlin, 2007.

\bibitem{nfbk}
B. Sz.-Nagy, C. Foia\c{s}, H. Bercovici, L. K\'erchy,
Harmonic analysis of operators on Hilbert space,
Springer, 2010.

\bibitem{pa}
V.I. Paulsen,
Every completely polynomially bounded operator is similar to a contraction,
J. Funct. Anal. 55 (1984), 1--17.

\bibitem{p}
F. Philipp,
Bessel orbits of normal operators,
J. Math. Anal. Appl. 448 (2017), 767--785.

\bibitem{rclv}
J. Ranieri, A. Chebira, Y.M. Lu, M. Vetterli,
Sampling and reconstructing diffusion fields with localized sources,
In: IEEE International Conference on Acoustics, Speech and Signal Processing
(ICASSP), 2011, May 2011, pp. 4016--4019.

\bibitem{r}
W. Rudin,
Real and complex analysis,
third edition,
McGraw-Hill Book Company, 1987.

\bibitem{ss}
H.S. Shapiro and A.L. Shields,
On some interpolation problems for analytic functions,
Amer. J. Math. 83 (1961), 513--532.

\bibitem{tu}
K. Takahashi and M. Uchiyama,
Every $C_{00}$ contraction with Hilbert-Schmidt defect operator is of class $C_0$,
J. Operator Theory 10 (1983), 331--335.

\bibitem{uch}
M. Uchiyama,
Contractions with $(\sigma,c)$ defect operators,
J. Operator Theory 12 (1984), 221--233.

\bibitem{u}
M. Unser,
Sampling -- 50 years after Shannon,
Proc. IEEE 88 (2000), 569--587.
\end{thebibliography}
\end{document}